\documentclass[a4paper,12pt]{amsart}
\usepackage[T1]{fontenc}
\usepackage[utf8]{inputenc}
\usepackage{lmodern}
\usepackage{amsfonts}
\usepackage{amsmath}
\usepackage{amssymb}
\usepackage{mathrsfs}
\usepackage[colorlinks=true,linkcolor=blue,citecolor=blue,urlcolor=blue]{hyperref}
\usepackage{enumitem}
\numberwithin{equation}{section}
\theoremstyle{plain}
\newtheorem{thm}{Theorem}[section]
\newtheorem{prop}[thm]{Proposition}
\newtheorem{cor}[thm]{Corollary}
\newtheorem{lemma}[thm]{Lemma}

\newtheorem{property}[thm]{Property}
\theoremstyle{definition}
\newtheorem{defi}[thm]{Definition}
\newtheorem{rem}[thm]{Remark}
\newtheorem{ex}[thm]{Example}

\newcommand{\R}{\mathbb R}
\newcommand{\Rd}{\mathbb R^{d}}
\newcommand{\Rdo}{\mathbb R^{d}_{0}}
\newcommand{\Ro}{\mathbb R_{0}}
\newcommand{\cB}{\mathcal B}
\newcommand{\dd}{[d]}

\hypersetup{
  pdftitle={Product Pitt inequalities and signed Boas equivalences for general monotone functions},
  pdfauthor={Niyaz Tokmagambetov},
  pdfkeywords={Pitt inequality, Fourier transform, Hardy variation, general monotone function, product weight}
}

\begin{document}
\title[Product Pitt inequalities and signed Boas equivalences]{Product Pitt inequalities and signed Boas equivalences for general monotone functions}
\author[N. Tokmagambetov]{Niyaz Tokmagambetov}
\address{Niyaz Tokmagambetov:
  \endgraf
  Institute of Mathematics and Mathematical Modeling
  \endgraf
  28 Shevchenko str., 050010 Almaty, Kazakhstan
  \endgraf
  {\it E-mail address:} {\rm tokmagambetov@math.kz} }
\date{}
\keywords{Pitt's inequality; Fourier transform; Bounded variation in the sense of Hardy; General monotone functions; Product weights}
\subjclass[2020]{Primary 42B10; Secondary 26D10, 26B30, 42B35, 46E30}

\begin{abstract}
We prove product-weight Pitt inequalities for real or complex functions on $\mathbb R^d$
whose full mixed Vitali variation satisfies an exterior-tail general monotonicity condition.
A two-parameter condition gives spatial weights with different powers near the coordinate
hyperplanes and at infinity. For the classical parameters, a reverse estimate yields
a Boas norm equivalence without sign, parity or separability assumptions.
The equivalence holds for finite spatial norm and, for $L^1$ data, characterizes finiteness
of either norm; the unrestricted nonintegrable converse remains open.
We also prove an equivalence under primitive control, give explicit examples,
and establish sharpness of the Fourier-side exponent range.
\end{abstract}

\maketitle

\section{Introduction}\label{SEC:intro}
We study weighted Fourier inequalities for real or complex functions on $\Rd$, $d\ge1$,
whose full mixed variation satisfies an exterior-tail general monotonicity condition.
For $f\in L^1(\Rd)$ our normalization is
$\widehat f(\xi)=\int_{\Rd}f(x)e^{ix\cdot\xi}dx$; Section~\ref{SEC:FT}
defines the improper transform used for nonintegrable data.
Write $w_{\boldsymbol a}(x)=\prod_i|x_i|^{a_i}$.

Theorem~\ref{TH:main} proves, for $1<p\le q<\infty$ and
$-1/q<\alpha_i<1-1/q$,
\begin{equation}\label{EQ:main-intro}
 \|w_{\vec\alpha}\widehat f\|_q
 \lesssim\left\|\prod_i|x_i|^{\beta_i(x_i)}f\right\|_p,
 \qquad
 \beta_i(t)=2-\alpha_i-1/p-1/q-
 \begin{cases}\tau_i,&|t|<1,\\\kappa_i,&|t|\ge1,\end{cases}
\end{equation}
for $f\in GM(\Rdo;\vec\tau,\vec\kappa)$, $\tau_i\le\kappa_i$, with finite
right-hand side. Definition~\ref{DEFI:GMd} controls the full mixed Vitali variation
over exterior product regions by the weighted tail integral of $|f|$ with kernel
$\prod_i(|y_i|^{\tau_i}+|y_i|^{\kappa_i})^{-1}$.
The finite spatial norm ensures local integrability across the coordinate
hyperplanes and existence of the improper transform.

Our second main result is the signed Boas equivalence
\begin{equation}\label{EQ:Boas-intro}
 \|w_{\vec\alpha}\widehat f\|_p\asymp\|w_{\vec\beta}f\|_p,
 \qquad \beta_i=1-2/p-\alpha_i,
\end{equation}
for $1<p<\infty$, $-1/p<\alpha_i<1-1/p$, and
$f\in GM(\Rdo;\vec1,\vec1)$ with finite spatial norm
(Theorem~\ref{rev:main} and Corollary~\ref{rev:twosided}).
No sign, parity or separability assumption is imposed. For $L^1$ data,
finiteness of either norm characterizes finiteness of the other; the unrestricted
nonintegrable converse remains open (Remark~\ref{rev:comments}).
Theorem~\ref{rev:GMbar-thm} gives the same equivalence for a primitive-controlled class.

Weighted Fourier inequalities have a substantial history
\cite{Hei84,JS84,Muc83a,Muc83b,BH92,BH03,SW87}.
The classical Pitt inequality \cite{Pit37,Ste56,Bec95}, reviewed in
\cite[pp. 790--791]{DGT17}, yields the product-weight estimate for Schwartz functions
when
\begin{equation}\label{EQ:prod-range}
 -1/q<\alpha_i\le0,\qquad 0\le\beta_i<1/p',\qquad
 \alpha_i+\beta_i=1-1/p-1/q,
\end{equation}
where $p'=p/(p-1)$; tensorization uses Minkowski's inequality and $p\le q$.
For our classical parameters, \eqref{EQ:main-intro} retains this balance but removes
the sign restrictions on the weight exponents.

The general-monotone theory of Liflyand and Tikhonov
\cite{LT08,LT09,LT11} replaces monotonicity by variation bounds.
The usual local condition \cite[Equation (1.6)]{LT12} implies, by dyadic summation,
\begin{equation}\label{EQ:GM-classical}
 \int_x^\infty|df(y)|\lesssim\int_{x/m}^\infty\frac{|f(y)|}{y}\,dy.
\end{equation}
We use this exterior-tail condition without asserting equivalence to the local one.
Related one-dimensional and radial Pitt--Boas results appear in
\cite{LT08,GLT11,DGT13,Deb19} and \cite[Corollary 3.9]{LT12}.
The two-parameter one-dimensional class was introduced in
\cite[Definition 2.2]{Tok25}; our forward theorem extends
\cite[Theorem 3.5]{Tok25} to full mixed variation in arbitrary dimension.
Hardy variation originates in \cite{Har06,CA33}; its Fourier theory is developed in
\cite{Lif19}. Product Fourier averaging is studied in \cite{DNTW23}, and related
discrete GM results and counterexamples in \cite{Nur00,Tik07,DT09,Tik21,Oga23,DO23}.

Two-sided estimates for nonnegative GM functions with general weights are known
\cite[Theorem 3.1]{LT12}; signed one-dimensional Fourier results appear in
\cite[Corollary 1.3 and Section 5.3]{Deb19}, and anisotropic monotone results in
\cite{Muk17}. The contribution of \eqref{EQ:Boas-intro} is the specified full
mixed-variation tail class, including complex, nonseparable data.
The mixed-tail framework of \cite{Tok26} instead controls integrated tail variation
by interior rectangular integrals and gives mixed-norm estimates for even extensions.
Remark~\ref{REM:tailcomparison} separates our condition from that framework in one direction.
For $\tau_i\le1\le\kappa_i$, the refined spatial norm in \eqref{EQ:main-intro}
is equivalent to the classical one on the fixed GM class
(Proposition~\ref{PROP:normequiv}); it does not enlarge the finite-norm domain there.

The forward proof recovers partial variations from full variation, splits the transform
into $2^d$ interior/exterior pieces, and applies scalar Hardy inequalities.
The reverse proof pairs $\widehat f$ with rectangle indicators and absorbs mixed-variation
oscillation errors over a geometric grid. Truncation justifies the pairing for improper
transforms; a separate polynomial bound handles $L^1$ data before spatial finiteness is known.
Throughout, $A\lesssim B$ means $A\le CB$, with $C$ depending only on the stated
parameters and class constants, and $A\asymp B$ means both inequalities.

\section{Functions of bounded Hardy variation}\label{SEC:HBV}
\subsection{Notation}
We write $\dd:=\{1,\dots,d\}$, $\Ro:=\R\setminus\{0\}$ and $\Rdo:=\{x\in\Rd: x_{1}\cdots x_{d}\neq0\}$. For $t\in\R$ we put $\cB(t):=\{s\in\R:|s|<|t|\}$ and $\cB^{c}(t):=\{s\in\R:|s|\geq|t|\}$. For $S\subseteq\dd$ and $x\in\Rd$ we write $x_{S}:=(x_{i})_{i\in S}$, $S^{c}:=\dd\setminus S$, and we identify $x$ with $(x_{S},x_{S^{c}})$. For $x\in\Rdo$ and $S\subseteq\dd$ we define the sets
$$
Q_{S}(x):=\prod_{i=1}^{d}E_{i}, \qquad E_{i}:=\Biggl\{\begin{array}{cc} \cB^{c}(x_{i}), & i\in S;\\ \cB(x_{i}), & i\notin S,\end{array}
$$
and abbreviate $Q(x):=Q_{\emptyset}(x)$ and $Q^{c}(x):=Q_{\dd}(x)$. For $d>1$, $Q^c(x)$ denotes this exterior product set, not the set-theoretic complement of $Q(x)$. For $S\subseteq\dd$ we also write $Q(x_{S}):=\prod_{i\in S}\cB(x_{i})$ and $Q^{c}(x_{S}):=\prod_{i\in S}\cB^{c}(x_{i})$, regarded as subsets of $\R^{S}$. The set $Q^{c}(x)$ is the union of the $2^{d}$ closed \emph{corners}
$$
C_{\sigma}(x):=\prod_{i=1}^{d}\big\{y_{i}:\ \sigma_{i}y_{i}\geq|x_{i}|\big\}, \qquad \sigma\in\{-1,1\}^{d}.
$$
Finally, for $m>0$ and $x\in\Rd$ we write $mx:=(mx_{1},\dots,mx_{d})$, $x/m:=(x_{1}/m,\dots,x_{d}/m)$, and for $x\in\Rdo$, $x^{-1}:=(x_{1}^{-1},\dots,x_{d}^{-1})$.

\subsection{Mixed differences and Vitali variation}
Let $f:\Rdo\to\mathbb C$ be measurable. For $\emptyset\neq T\subseteq\dd$, a box $R_{T}=\prod_{i\in T}[a_{i},b_{i}]$ avoiding the coordinate hyperplanes and $z\in\R^{T^{c}}$ with nonzero coordinates, the mixed difference of $f$ over $R_{T}$ in the variables $T$, with the remaining variables frozen at $z$, is
$$
\Delta_{T}(f;R_{T};z):=\sum_{\epsilon\in\{0,1\}^{T}}(-1)^{|T|-|\epsilon|}\,f\big(c^{\epsilon},z\big), \qquad c^{\epsilon}_{i}:=\Biggl\{\begin{array}{cc} b_{i}, & \epsilon_{i}=1;\\ a_{i}, & \epsilon_{i}=0,\end{array}
$$
where $|\epsilon|=\sum_{i\in T}\epsilon_{i}$; for $T=\emptyset$ we set $\Delta_{\emptyset}(f;\cdot;z):=f(z)$. Two elementary properties of mixed differences will be used repeatedly:
\begin{enumerate}
\item[$(\Delta1)$] (additivity) if $R_{T}$ is divided by a hyperplane $\{y_{k}=c\}$, $k\in T$, into two boxes $R'_{T}$ and $R''_{T}$, then $\Delta_{T}(f;R_{T};z)=\Delta_{T}(f;R'_{T};z)+\Delta_{T}(f;R''_{T};z)$;
\item[$(\Delta2)$] (reduction) if $k\notin T$ and $z=(c,z')$ with $c$ the $k$-th coordinate, then
$$
\Delta_{T\cup\{k\}}\big(f;R_{T}\times[c,c'];z'\big)=\Delta_{T}\big(f;R_{T};(c',z')\big)-\Delta_{T}\big(f;R_{T};(c,z')\big).
$$
\end{enumerate}
The \emph{Vitali variation} of $f$ on $R_{T}$ in the variables $T$ (with $z$ frozen) is
$$
\int_{R_{T}}|d_{T}f(\cdot,z)|:=\sup\sum_{\mathbf j}\big|\Delta_{T}(f;R_{T,\mathbf j};z)\big|,
$$
the supremum being taken over all decompositions of $R_{T}$ into boxes $R_{T,\mathbf j}$ generated by partitions of the intervals $[a_{i},b_{i}]$, $i\in T$. By $(\Delta1)$ the sum does not decrease under refinement, and the Vitali variation is additive with respect to decompositions of $R_{T}$ into boxes and monotone with respect to inclusion. For an unbounded box $B_{T}=\prod_{i\in T}\{y_{i}:\sigma_{i}y_{i}\geq a_{i}\}$ we set $\int_{B_{T}}|d_{T}f(\cdot,z)|:=\sup\int_{R_{T}}|d_{T}f(\cdot,z)|$ over bounded boxes $R_{T}\subset B_{T}$, and for a union of finitely many such boxes with disjoint interiors we sum the variations. In particular, for $x\in\Rdo$ and $T\subseteq\dd$ we write
$$
V_{T}(f;x):=\int\limits_{Q^{c}(x_{T})}|d_{T}f(\cdot,x_{T^{c}})| \quad (T\neq\emptyset), \qquad V_{\emptyset}(f;x):=|f(x)|,
$$
and $V(f;x):=V_{\dd}(f;x)$.
Thus $V(f;x)$ is the sum of the Vitali variations of $f$ over the $2^{d}$ corners $C_{\sigma}(x)$, it depends only on $(|x_{1}|,\dots,|x_{d}|)$, and it is non-increasing in each $|x_{i}|$.
\begin{defi}\label{DEFI:HBV}
We say that a measurable $f:\Rdo\to\mathbb C$ belongs to $HBV_{0}(\Rdo)$ if
\begin{enumerate}
\item[$(a)$] $V(f;x)<\infty$ for every $x\in\Rdo$;
\item[$(b)$] for every $k\in\dd$ and every $z\in\R^{\dd\setminus\{k\}}$ with non-zero coordinates, $f(x)\to0$ as $|x_{k}|\to\infty$ with $x_{\dd\setminus\{k\}}=z$.
\end{enumerate}
\end{defi}
For $d=1$, $HBV_{0}(\Ro)$ is the class $BV_{0}(\Ro)$: functions of bounded variation on $\{|x|\geq a\}$ for every $a>0$ which vanish at infinity. The vanishing condition $(b)$ is essential: it allows one to recover $f$ and all its partial mixed variations from the full mixed variation.
\begin{property}\label{PROP:recover}
Let $f\in HBV_{0}(\Rdo)$, $x\in\Rdo$ and $T\subseteq\dd$. Then
$$
V_{T}(f;x)\leq V(f;x).
$$
In particular, $|f(x)|$ is bounded by the variation on the exterior corner containing $x$, and hence $|f(x)|\leq V(f;x)$ and, for every $S\subseteq\dd$ and every $z\in\R^{S^{c}}$ with non-zero coordinates, the function $x_{S}\mapsto f(x_{S},z)$ belongs to $HBV_{0}(\R^{S}_{0})$.
\end{property}
\begin{proof}
It suffices to prove the cornerwise estimate
\begin{equation}\label{EQ:recover-corner}
 \int_{C_T(x)}|d_Tf(\cdot,x_{T^c})|
 \le\int_{C(x)}|d_{\dd}f|,
 \quad C(x)=\prod_i[x_i,\infty),\quad C_T(x)=\prod_{i\in T}[x_i,\infty),
\end{equation}
for $x>0$; reflections and summation give the assertion.
Argue downward in $|T|$, the case $T=\dd$ being immediate.
Choose $k\notin T$ and put $T'=T\cup\{k\}$. For a grid of cells $R_{T,\mathbf j}$
in a bounded box of $C_T(x)$, write
$g_{\mathbf j}(t)=\Delta_T(f;R_{T,\mathbf j};(t,x_{T^c\setminus\{k\}}))$.
Reduction $(\Delta2)$, followed by the induction hypothesis, gives
\[
 \sum_{\mathbf j}|g_{\mathbf j}(x_k)|
 \le\sum_{\mathbf j}|g_{\mathbf j}(Y)|
 +\int_{C_{T'}(x)}|d_{T'}f(\cdot,x_{T'^c})|
 \le\sum_{\mathbf j}|g_{\mathbf j}(Y)|+\int_{C(x)}|d_{\dd}f|.
\]
Each term $g_{\mathbf j}(Y)$ tends to zero by the coordinate-tail condition.
Let $Y\to\infty$ and take the supremum over grids and bounded boxes.
The same argument includes $T=\emptyset$; the section assertion follows immediately.
\end{proof}
The class $HBV_{0}(\Rdo)$ is not contained in $L^{1}(\Rd)$, nor in $L^{1}_{loc}(\Rd)$: no integrability near the coordinate hyperplanes is imposed. Integrability will be a separate hypothesis in Section \ref{SEC:main}.

\begin{lemma}[Vanishing variation tails]\label{LM:Vtail}
For $f\in HBV_0(\Rdo)$, with all other coordinates fixed and nonzero,
$V(f;x)\to0$ as $|x_j|\to\infty$.
\end{lemma}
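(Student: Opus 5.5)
The plan is to prove the statement corner by corner, using only finiteness of the variation (Definition~\ref{DEFI:HBV}$(a)$) and superadditivity of Vitali variation with respect to splitting an unbounded box. The vanishing condition $(b)$ should not be needed. Since $V(f;x)$ is the sum of the Vitali variations of $f$ over the $2^{d}$ corners $C_\sigma(x)$, it suffices to show that each corner term tends to zero. By reflecting coordinates we may take $\sigma=(1,\dots,1)$ and $x_i>0$. Fix $x_i=a_i>0$ for $i\ne j$ and some $a_j>0$, and put
\[
B:=\prod_{i}[a_i,\infty),\qquad B'_t:=\prod_{i\ne j}[a_i,\infty)\times[a_j,t],\qquad B''_t:=\prod_{i\ne j}[a_i,\infty)\times[t,\infty),
\]
for $t>a_j$. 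By $(a)$ the variation $\int_B|d_{\dd}f|$ is finite, and the corner term of $V(f;x)$ with $x_j=t$ is exactly $\int_{B''_t}|d_{\dd}f|$.

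The first step is the superadditivity inequality
\[
\int_{B'_t}|d_{\dd}f|+\int_{B''_t}|d_{\dd}f|\le\int_B|d_{\dd}f|.
\]
Take bounded boxes $R_1\subset B'_t$ and $R_2\subset B''_t$. Enlarge both in the coordinates $i\ne j$ to the same product of intervals, obtaining $R_1'\supset R_1$ and $R_2'\supset R_2$ that meet along $\{y_j=t\}$. Their union is then a bounded box $R\subset B$, possibly after also filling any gap in the $j$-th coordinate. Additivity $(\Delta1)$ of the variation over box decompositions, together with monotonicity under inclusion, gives
\[
\int_{R_1}+\int_{R_2}\le\int_{R_1'}+\int_{R_2'}\le\int_R\le\int_B .
\]
Taking suprema over $R_1$ and $R_2$ yields the inequality.

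The second step chooses an almost-extremal box. Given $\varepsilon>0$, pick a bounded box $R\subset B$ with $\int_R|d_{\dd}f|>\int_B|d_{\dd}f|-\varepsilon$, and let $b_j$ be the upper endpoint of $R$ in the $j$-th coordinate. For $t\ge b_j$ we have $R\subset B'_t$, so the first step gives
\[
\int_{B''_t}|d_{\dd}f|\le\int_B|d_{\dd}f|-\int_R|d_{\dd}f|<\varepsilon .
\]
Hence each corner term tends to zero as $t\to\infty$, and summing over the $2^{d}$ corners proves the lemma.

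The only point needing care is the gluing of bounded boxes in the first step. The definition of variation on unbounded boxes is a supremum over bounded subboxes, so the matching of $R_1$ and $R_2$ in the other coordinates must be done explicitly rather than by quoting additivity for bounded boxes. This is routine given $(\Delta1)$ and monotonicity under inclusion.
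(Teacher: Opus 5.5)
Your proof is correct and uses the same two ingredients as the paper: finiteness of each corner variation from Definition~\ref{DEFI:HBV}$(a)$, and additivity of the Vitali variation across a cut $\{y_j=t\}$ for boxes that are unbounded in the other coordinates. The paper writes the corner variation as a convergent series over slabs $[t_k,t_{k+1}]$ in the $j$th coordinate and takes its tail, whereas you use a single cut, superadditivity and an almost-extremal bounded box; this is the same argument in different bookkeeping, and, as you observe, condition $(b)$ is not needed in either version.
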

\begin{proof}
Fix a corner above a positive cutoff, and partition its $j$th ray into consecutive
bounded intervals tending to infinity. Variation is additive over any finite
collection of the resulting slabs, so their variations have finite sum, bounded
by the variation on the corner. Every bounded rectangle lying beyond the $n$th
cut can be subdivided into finitely many such slabs. Its variation is bounded by
the corresponding tail of that convergent sum. Taking the supremum over bounded
rectangles and then summing over the finitely many corners proves the assertion.
\end{proof}

\section{General monotone functions of several variables}\label{SEC:GM}
Let $\vec\tau=(\tau_{1},\dots,\tau_{d})$ and $\vec\kappa=(\kappa_{1},\dots,\kappa_{d})$ with $\tau_{i}\leq\kappa_{i}$ for all $i$, and put
$$
w_{i}(t):=|t|^{\tau_{i}}+|t|^{\kappa_{i}}, \qquad t\in\Ro, \qquad W(y):=\prod_{i=1}^{d}w_{i}(y_{i}), \qquad y\in\Rdo.
$$
Note that $w_{i}(t)\backsimeq|t|^{\tau_{i}}$ for $|t|<1$ and $w_{i}(t)\backsimeq|t|^{\kappa_{i}}$ for $|t|\geq1$.
\begin{defi}\label{DEFI:GMd}
We say that $f\in HBV_{0}(\Rdo)$ is \emph{general monotone with parameters $\vec\tau$ and $\vec\kappa$}, and write $f\in GM(\Rdo;\vec\tau,\vec\kappa)$, if there exist $C>0$ and $m\geq1$ such that
\begin{equation}\label{EQ:GMd}
V(f;x)=\int\limits_{Q^{c}(x)}|d_{\dd}f| \leq C\int\limits_{Q^{c}(x/m)}\frac{|f(y)|}{W(y)}\,dy =: C\, I(f;x/m) \qquad \hbox{for all } x\in\Rdo.
\end{equation}
\end{defi}
For $d=1$ we denote this class by $GM(\Ro;\tau,\kappa)$, and $GM(\Ro;1,1)$ is the two-sided version of the tail condition \eqref{EQ:GM-classical}, which follows from the usual local GM condition of \cite{LT08, LT11, LT12}. The right-hand side of \eqref{EQ:GMd} may be infinite, in which case the condition is void; under the hypotheses of Theorem \ref{TH:main} this does not happen (Remark \ref{REM:finite}).

\begin{ex}[Products]\label{EX:products}
If $g_i\in GM(\Ro;\tau_i,\kappa_i)$, then $f(x)=\prod_i g_i(x_i)$ belongs to
$GM(\Rdo;\vec\tau,\vec\kappa)$: mixed differences factor, so
$V(f;x)=\prod_iV(g_i;x_i)$ and $I(f;x)=\prod_iI(g_i;x_i)$.
Take $m=\max_i m_i$. The zero-product case is immediate; all integral
factorizations are understood in the extended nonnegative sense.
Section~\ref{SEC:1d} supplies explicit factors.
\end{ex}

\begin{ex}[Hardy-monotone functions]\label{EX:monotone}
Let $f\ge0$ satisfy the coordinate-tail condition in Definition~\ref{DEFI:HBV} and
\[
 (-1)^d\sigma_1\cdots\sigma_d\Delta_{\dd}(f;R)\ge0
\]
on every box in the orthant with signs $\sigma$.
Telescoping, with successive coordinate limits at infinity, gives variation
$f(\sigma|x|)$ on that corner and shows decrease in each absolute coordinate.
Hence $V(f;x)=\sum_\sigma f(\sigma|x|)$, and for $m>1$,
\[
 I(f;x/m)\ge\int_{\{|x_i|/m\le|y_i|\le|x_i|\}}
 \frac{f(y)}{2^d\prod_i|y_i|}\,dy
 \ge 2^{-d}(\log m)^dV(f;x).
\]
Thus $f\in GM(\Rdo;\vec1,\vec1)$.
The nonproduct example $(1+\sum_i|x_i|)^{-a}$, $a>0$, satisfies the sign
condition because its full mixed derivative on each orthant has that sign.
\end{ex}

\begin{ex}[Signed nonseparable data]\label{EX:signed}
In dimension two the function
\[
 f(x_1,x_2)=(x_1-x_2)\mathbf1_{[1,2]^2}(x_1,x_2)
\]
changes sign and is not a product of one-variable functions. It belongs to
every parameter class in Definition~\ref{DEFI:GMd}. Indeed its full variation
is bounded, as follows either by writing it as the difference of two products
of bounded-variation functions or by computing its mixed differences.
The variation vanishes if any cutoff exceeds $2$. For the remaining cutoffs,
$m=3$ ensures that the defining integral includes $[1,2]^2$ and is bounded
below by the positive integral of $|x_1-x_2|/W(x)$ on that box.
Its spatial weighted norms are finite for all finite exponents. Thus both
the forward theorem and the signed Boas equivalence apply directly to
nonseparable, sign-changing data. Multiplication by further box indicators
gives the same conclusion in dimensions $d>2$.
\end{ex}

\begin{property}\label{PROP:incl}
Let $\tau_{2,i}\leq\tau_{1,i}\leq\kappa_{1,i}\leq\kappa_{2,i}$ for $i=1,\dots,d$. Then
$$
GM(\Rdo;\vec\tau_{2},\vec\kappa_{2})\subseteq GM(\Rdo;\vec\tau_{1},\vec\kappa_{1}).
$$
\end{property}
\begin{proof}
For $|y_i|<1$, $|y_i|^{-\tau_{2,i}}\le|y_i|^{-\tau_{1,i}}$;
for $|y_i|\ge1$, $|y_i|^{-\kappa_{2,i}}\le|y_i|^{-\kappa_{1,i}}$.
Using the two power comparisons for $w_i$ gives $W_2^{-1}\lesssim W_1^{-1}$,
so $I_2\lesssim I_1$ and the GM bound transfers.
\end{proof}
\begin{prop}[Independent dilations]\label{REM:dilation}
Let $f$ satisfy \eqref{EQ:GMd} with $C,m$ and let $\lambda_i>0$. Then $f_{\vec\lambda}(x)=f(\lambda_1x_1,\ldots,\lambda_dx_d)$ satisfies the same condition with $m$ and
\[
 C_{\vec\lambda}=C\prod_{i=1}^d\max\{\lambda_i^{1-\tau_i},\lambda_i^{1-\kappa_i}\}.
\]
In particular membership is preserved under every fixed independent dilation.
\end{prop}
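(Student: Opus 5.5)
The plan is to compare both sides of \eqref{EQ:GMd} for $f_{\vec\lambda}$ with the corresponding quantities for $f$ at the rescaled point $\lambda x:=(\lambda_1x_1,\dots,\lambda_dx_d)$. First I check that $f_{\vec\lambda}\in HBV_0(\Rdo)$ with
\[
 V(f_{\vec\lambda};x)=V(f;\lambda x).
\]
The coordinatewise map $y\mapsto\lambda y$ preserves the coordinate hyperplanes, corners and boxes. It also sends partitions of intervals bijectively to partitions, and mixed differences satisfy $\Delta_{\dd}(f_{\vec\lambda};R)=\Delta_{\dd}(f;\lambda R)$. Hence every Vitali sum for $f_{\vec\lambda}$ on a bounded box in $C_\sigma(x)$ equals a Vitali sum for $f$ on the image box in $C_\sigma(\lambda x)$, and conversely. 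Taking suprema and summing over the $2^d$ corners gives the identity. The coordinate-tail condition $(b)$ is clearly preserved because $\lambda_i>0$.

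Next I would handle the tail integral. Substituting $y=u/\lambda$ coordinatewise in $I(f_{\vec\lambda};x/m)$ maps $Q^c(x/m)$ onto $Q^c(\lambda x/m)$, and Jacobian and weight factor by coordinate. This gives
\[
 I(f_{\vec\lambda};x/m)=\int_{Q^c(\lambda x/m)}|f(u)|\prod_i\frac{\lambda_i^{-1}}{w_i(u_i/\lambda_i)}\,du .
\]
The key scalar estimate is
\[
 \lambda w_i(t/\lambda)\le\max\{\lambda^{1-\tau_i},\lambda^{1-\kappa_i}\}\,w_i(t),
\]
which holds because $\lambda w_i(t/\lambda)=\lambda^{1-\tau_i}|t|^{\tau_i}+\lambda^{1-\kappa_i}|t|^{\kappa_i}$, with each term bounded by the maximum coefficient times the matching term of $w_i(t)$. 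Inverting this gives
\[
 \frac{\lambda_i^{-1}}{w_i(u_i/\lambda_i)}\ge\frac{1}{M_i\,w_i(u_i)},\qquad M_i:=\max\{\lambda_i^{1-\tau_i},\lambda_i^{1-\kappa_i}\}.
\]
Therefore
\[
 I(f;\lambda x/m)\le\Bigl(\prod_iM_i\Bigr)I(f_{\vec\lambda};x/m).
\]

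Finally I chain the pieces. Applying the GM condition for $f$ at the point $\lambda x\in\Rdo$ gives
\[
 V(f_{\vec\lambda};x)=V(f;\lambda x)\le C\,I(f;\lambda x/m)\le C\prod_iM_i\,I(f_{\vec\lambda};x/m),
\]
which is the claim with the same $m$. If some integral is infinite the inequality is trivially true in the extended sense. The only genuinely delicate step is the variation identity in the first paragraph. Even there, the point is just that the scaling is a product of increasing bijections, so it is compatible with grids and unbounded corners; no real obstacle is expected.
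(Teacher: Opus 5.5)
Your proposal is correct and follows essentially the same route as the paper: the variation identity $V(f_{\vec\lambda};x)=V(f;\vec\lambda x)$, a coordinatewise change of variables in the tail integral, and the scalar weight comparison. Your inequality $\lambda w_i(t/\lambda)\le M_i w_i(t)$ becomes the paper's displayed bound after putting $t=\lambda y$. The only differences are cosmetic: you substitute in $I(f_{\vec\lambda};x/m)$ rather than in the GM bound for $f$ at $\vec\lambda x$, and you spell out the $HBV_0$ membership and the grid/corner compatibility that the paper states in one line.
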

\begin{proof}
The mixed variation of $f_{\vec\lambda}$ above $x$ is that of $f$ above $\vec\lambda x$. Substitute $z_i=\lambda_i y_i$ in the latter GM bound and use
\[
 \frac{\lambda_i}{\lambda_i^{\tau_i}|y_i|^{\tau_i}+\lambda_i^{\kappa_i}|y_i|^{\kappa_i}}
 \leq\frac{\max\{\lambda_i^{1-\tau_i},\lambda_i^{1-\kappa_i}\}}{w_i(y_i)}.
\]
This proves the assertion. The displayed bound is uniform over all independent dilations when every $\tau_i=\kappa_i=1$. Outside this case it need not be uniform; no claim about the optimal constant for an individual function is made.
\end{proof}

\section{Pointwise estimates of the Fourier transform}\label{SEC:FT}
For $f\in HBV_{0}(\Rdo)$, $\xi\in\Rdo$ and $S\subseteq\dd$ we define
\begin{equation}\label{EQ:pieces}
\hat f_{S}(\xi):=\int\limits_{Q(\xi^{-1}_{S^{c}})}\Bigg(\ \int\limits_{Q^{c}(\xi^{-1}_{S})} f(x_{S},x_{S^{c}})\,e^{ix_{S}\cdot\xi_{S}}\,dx_{S}\Bigg)e^{ix_{S^{c}}\cdot\xi_{S^{c}}}\,dx_{S^{c}},
\end{equation}
where the inner integral is the sum over the $2^{|S|}$ corners of $Q^{c}(\xi^{-1}_{S})$ of iterated improper Riemann integrals (their existence is part of Lemma \ref{LM:ibp} below), and the outer integral is a Lebesgue integral over the bounded box $Q(\xi^{-1}_{S^{c}})$. For $S=\emptyset$ the inner integral is absent and $\hat f_{\emptyset}(\xi)=\int_{Q(\xi^{-1})}f(x)e^{ix\cdot\xi}dx$; for $S=\dd$ the outer integral is absent. Whenever the $\hat f_{S}$ are well defined we set
\begin{equation}\label{EQ:FT-def}
\hat f:=\sum_{S\subseteq\dd}\hat f_{S},
\end{equation}
which agrees with the Fourier transform of $f$ whenever $f\in L^{1}(\Rd)$.

We first prove the basic integration by parts estimate on a single corner; for $d=1$ this is the usual bounded-variation integration-by-parts argument; see \cite{LT08, LT09}.
\begin{lemma}\label{LM:ibp}
Let $f\in HBV_{0}(\Rdo)$, $a\in(0,\infty)^{d}$, $C(a)=\prod_{i}[a_{i},\infty)$ and $\xi\in\Rdo$. Then the iterated improper integral $\int_{C(a)}f(x)e^{ix\cdot\xi}dx$ (integrating successively in $x_{1},\dots,x_{d}$) exists and
\begin{equation}\label{EQ:ibp}
\Big|\int_{C(a)} f(x)\,e^{ix\cdot\xi}\,dx\Big|\leq\frac{2^{d}}{|\xi_{1}\cdots\xi_{d}|}\int_{C(a)}|d_{\dd}f|.
\end{equation}
The same holds for each of the other $2^{d}-1$ corners.
\end{lemma}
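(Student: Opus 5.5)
Induct on $d$, integrating in $x_1,\dots,x_d$ in turn and trading each oscillatory factor for a variation at the cost of $2/|\xi_i|$.

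\textbf{Bounded boxes.} Fix a bounded box $R=\prod_i[a_i,b_i]\subset C(a)$. Put $E_i(t)=e^{it\xi_i}/(i\xi_i)$, a primitive of $e^{it\xi_i}$ with $|E_i|\le 1/|\xi_i|$. For a function of bounded Vitali variation on $R$, there is the multivariate Abel/Riemann--Stieltjes identity
\[
\int_R f(x)e^{ix\cdot\xi}dx=\sum_{T\subseteq\dd}(\pm)\int_{R_T}\prod_{i\in T}E_i(x_i)\,\Big[\prod_{i\notin T}E_i\Big]_{\partial}\,d_Tf(\cdot,\cdot).
\]
Here the variables in $T$ are integrated by parts in Stieltjes form. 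The variables not in $T$ are evaluated at the endpoints $a_i,b_i$.

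Rather than manipulate this identity directly, I would prove the corner estimate one variable at a time. For $d=1$ the statement is the classical one: for $g\in BV$ on $[a,b]$,
\[
\int_a^b g e^{it\xi}dt=g(b)E(b)-g(a)E(a)-\int_a^bE\,dg .
\]
Let $b\to\infty$, using $g(b)\to0$ and $\int_a^\infty|dg|<\infty$. The limit exists, and
\[
\Big|\int_a^\infty g e^{it\xi}\,dt\Big|\le\frac{|g(a)|+\int_a^\infty|dg|}{|\xi|}\le\frac{2}{|\xi|}\int_a^\infty|dg| ,
\]
since $|g(a)|\le\int_a^\infty|dg|$ by vanishing at infinity.

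\textbf{Induction step.} For general $d$, write the inner integral in $x_1$ as $F(x')=\int_{a_1}^\infty f(x_1,x')e^{ix_1\xi_1}dx_1$. By Property~\ref{PROP:recover}, each section $x_1\mapsto f(x_1,x')$ lies in $BV_0$, so $F$ exists. The $d=1$ identity then gives
\[
F(x')=-f(a_1,x')E_1(a_1)-\int_{a_1}^\infty E_1\,d_1f(\cdot,x').
\]
The boundary term is a function of $x'$ in $HBV_0$ on the remaining corner, with mixed variation bounded by that of $f$ on $C(a)$. This follows by $(\Delta2)$ and the telescoping argument of Property~\ref{PROP:recover}. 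So the induction hypothesis in $d-1$ variables bounds its iterated integral by $\frac{2^{d-1}}{|\xi_2\cdots\xi_d|}\cdot\frac{1}{|\xi_1|}\int_{C(a)}|d_\dd f|$.

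\textbf{Main obstacle.} The Stieltjes term $G(x')=\int_{a_1}^\infty E_1\,d_1f(\cdot,x')$ is the delicate part. I would treat $G$ as a function of $x'$ and show $G\in HBV_0$ in $x'$ with
\[
\int_{C(a')}|d_{\{2..d\}}G|\le\frac{1}{|\xi_1|}\int_{C(a)}|d_\dd f| .
\]
To do this, take a mixed difference of $G$ over a box in $x'$. It equals $\int E_1\,d_1\Delta_{\{2..d\}}f$, which is a limit of Riemann--Stieltjes sums $\sum_j E_1(t_j)\Delta_\dd(f;[s_{j-1},s_j]\times R')$. The absolute value is at most $|\xi_1|^{-1}\sum_j|\Delta_\dd f|$. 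Summing over a grid in $x'$ gives at most $|\xi_1|^{-1}$ times the full Vitali variation. Vanishing of $G$ as $|x_k|\to\infty$ follows from Lemma~\ref{LM:Vtail}, since $|G(x')|\le|\xi_1|^{-1}V$-type tails. The induction hypothesis then gives the same bound for this term.

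The two pieces each contribute $\frac{2^{d-1}}{|\xi_1\cdots\xi_d|}\int_{C(a)}|d_\dd f|$, which sums to the constant $2^d$. Existence of the iterated improper integral follows at each stage from the one-dimensional case applied to $BV_0$ sections.

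\textbf{Other corners.} The remaining $2^d-1$ corners reduce to this one by the reflections $x_i\mapsto -x_i$. These preserve variation and change only the signs of the $\xi_i$.
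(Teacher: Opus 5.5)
Your argument is correct, but it runs the induction in the opposite order from the paper.

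\textbf{The paper's order.} The paper peels off the \emph{outermost} variable. It sets $F(t)=\int_{C(a')}f(x',t)e^{ix'\cdot\xi'}dx'$ and applies the $(d-1)$-dimensional statement to the difference of sections $f(\cdot,t')-f(\cdot,t)$. Using $(\Delta2)$ it obtains
\[
|F(t')-F(t)|\le 2^{d-1}\prod_{i<d}|\xi_i|^{-1}\int_{C(a')\times[t,t']}|d_{[d]}f|.
\]
Summing over partitions, $F$ has bounded variation on $[a_d,\infty)$ and $F(t)\to0$. The one-dimensional bound for $F$ then supplies the last factor $2/|\xi_d|$.

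\textbf{Your order.} You integrate by parts in the \emph{innermost} variable first. You then apply the induction hypothesis twice, to $f(a_1,\cdot)$ and to the Stieltjes term $G$, each contributing $2^{d-1}$.

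\textbf{What each buys.} In the paper's order, the induction hypothesis is only applied to differences of sections of $f$. These lie in $HBV_0$ for free by Property~\ref{PROP:recover}, so the only new object whose regularity must be checked is the scalar function $F$.

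Your order needs the extra step you correctly flagged as the main obstacle. You must show that $G$, defined by a Stieltjes integral, lies in $HBV_0(\R^{d-1}_0)$ with variation at most $|\xi_1|^{-1}\int_{C(a)}|d_{[d]}f|$. This requires commuting mixed differences with the Stieltjes integral, and taking a common refinement of the $x_1$-partitions across the cells of the $x'$-grid.

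As the lemma is stated, the induction hypothesis also needs finite variation of $G$ over every exterior corner of $\R^{d-1}_0$, not only over $C(a')$. Your same estimate gives this, since the variation there is bounded by that of $f$ over $[a_1,\infty)\times C_\sigma(x')$, which is part of $V(f;(a_1,x'))$. Alternatively, you can run the induction for functions given on a single corner.

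Existence of the iterated integral in your route comes from the induction hypothesis, which includes existence, applied to both pieces together with linearity. It does not come from the one-dimensional case alone.

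In exchange, your route makes the constant transparent. Unrolled, it produces $2^d$ terms, one for each choice of boundary or Stieltjes term in each variable. Each term is bounded by $\prod_i|\xi_i|^{-1}\int_{C(a)}|d_{[d]}f|$, using the corner recovery estimate for the boundary terms.
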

\begin{proof}
For $d=1$, integration by parts on $[a,A]$ gives
\[
 \int_a^Af(x)e^{ix\xi}dx
 =\left[\frac{f(x)e^{ix\xi}}{i\xi}\right]_a^A
 -\frac1{i\xi}\int_a^Ae^{ix\xi}\,df(x).
\]
The upper boundary vanishes and the Stieltjes integral converges absolutely.
Since $|f(a)|\le\int_a^\infty|df|$, the bound is $2|\xi|^{-1}\int_a^\infty|df|$.

Inductively, put $x=(x',t)$, $a=(a',a_d)$ and
$F(t)=\int_{C(a')}f(x',t)e^{ix'\cdot\xi'}dx'$.
Property~\ref{PROP:recover} makes each section admissible.
Apply the induction hypothesis to $f(\cdot,t')-f(\cdot,t)$ and use $(\Delta2)$:
\[
 |F(t')-F(t)|\le\frac{2^{d-1}}{\prod_{i<d}|\xi_i|}
 \int_{C(a')\times[t,t']}|d_{\dd}f|.
\]
Summing over partitions yields
\begin{equation}\label{EQ:varF}
 \int_{a_d}^\infty|dF|\le\frac{2^{d-1}}{\prod_{i<d}|\xi_i|}
 \int_{C(a)}|d_{\dd}f|,\qquad F(t)\to0.
\end{equation}
The limit follows from the section estimate, cornerwise recovery, and
Lemma~\ref{LM:Vtail}. Apply the one-dimensional bound to $F$ to obtain
\eqref{EQ:ibp}. Coordinate reflections handle the other corners.
\end{proof}
\begin{lemma}\label{LM:pointwise}
Let $f\in HBV_{0}(\Rdo)$, $\xi\in\Rdo$ and $S\subseteq\dd$. Then the inner integral in \eqref{EQ:pieces} exists for every $x_{S^{c}}\in Q(\xi^{-1}_{S^{c}})$ with non-zero coordinates, and
\begin{equation}\label{EQ:pointwise}
\begin{split}
|\hat f_{S}(\xi)|&\leq\frac{2^{|S|}}{\prod_{i\in S}|\xi_{i}|}\int\limits_{Q(\xi^{-1}_{S^{c}})}V_{S}\big(f;(\xi^{-1}_{S},x_{S^{c}})\big)\,dx_{S^{c}}\\
&\leq\frac{2^{|S|}}{\prod_{i\in S}|\xi_{i}|}\int\limits_{Q(\xi^{-1}_{S^{c}})}V\big(f;(\xi^{-1}_{S},x_{S^{c}})\big)\,dx_{S^{c}},
\end{split}
\end{equation}
provided the last integral is finite (for $S=\emptyset$ this reads $|\hat f_{\emptyset}(\xi)|\leq\int_{Q(\xi^{-1})}|f(x)|dx$, and for $S=\dd$ the outer integral is absent).
\end{lemma}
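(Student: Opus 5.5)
The plan is to freeze the outer variables and apply Lemma~\ref{LM:ibp} to sections. Fix $\xi\in\Rdo$, $S\subseteq\dd$ and $x_{S^c}\in Q(\xi^{-1}_{S^c})$ with nonzero coordinates. The case $S=\emptyset$ is trivial: the inner integral is absent and $|\hat f_\emptyset(\xi)|\le\int_{Q(\xi^{-1})}|f|$, which is the stated bound since $V_\emptyset(f;x)=|f(x)|$. So assume $S\neq\emptyset$ and set $g(x_S):=f(x_S,x_{S^c})$. By Property~\ref{PROP:recover}, $g\in HBV_0(\R^S_0)$.

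Applying Lemma~\ref{LM:ibp} in dimension $|S|$ to $g$ on each of the $2^{|S|}$ corners of $Q^c(\xi_S^{-1})$ has two consequences. First, each iterated improper integral exists. Second, the integral over each corner is bounded by $2^{|S|}\prod_{i\in S}|\xi_i|^{-1}$ times the Vitali variation of $g$ on that corner. The corners have disjoint interiors, and by definition the variation over $Q^c(\xi_S^{-1})$ is the sum over corners. Summing therefore gives
\[
\Big|\int_{Q^c(\xi^{-1}_S)}f(x_S,x_{S^c})e^{ix_S\cdot\xi_S}dx_S\Big|\le\frac{2^{|S|}}{\prod_{i\in S}|\xi_i|}V_S\big(f;(\xi_S^{-1},x_{S^c})\big).
\]
One point needs care here. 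The $2^{|S|}$ factor in \eqref{EQ:ibp} is per corner, and summing the corner bounds replaces the per-corner variations by their sum, namely $V_S$. No extra factor $2^{|S|}$ appears. This settles existence of the inner integral.

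For the outer integral, first note that the displayed bound controls the inner integral pointwise by $V_S\le V$, using Property~\ref{PROP:recover} at the point $(\xi_S^{-1},x_{S^c})$. Hence if $\int_{Q(\xi^{-1}_{S^c})}V(f;(\xi_S^{-1},x_{S^c}))\,dx_{S^c}$ is finite, the inner integral is dominated by an integrable function. Integrating the pointwise bound over the bounded box then gives both inequalities in \eqref{EQ:pointwise}, and the second is immediate from $V_S\le V$. When $S=\dd$ there is no outer integral and the bound is the displayed one.

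The main obstacle is measurability of the inner integral, and of $V_S(f;\cdot)$, as functions of $x_{S^c}$; the outer Lebesgue integral requires it. I would handle this in two steps. The inner integral is a limit, over bounded boxes $R_n\uparrow$ the corner, of integrals over $R_n$ of a measurable function. Whenever these are finite they are measurable in $x_{S^c}$ by Fubini. Finiteness a.e. needs $\int_{R_n}|f|<\infty$ for a.e.\ $x_{S^c}$, and this holds because $|f|\le V$ and $V$ is bounded on $R_n\times\{x_{S^c}\}$, being monotone in each $|x_i|$ and finite. For the majorant, I would avoid measurability of $V_S$ altogether. The quantity $V(f;(\xi_S^{-1},x_{S^c}))$ is monotone in each $|x_i|$, $i\in S^c$, and a function monotone in each coordinate on an orthant is Borel, so the last integral makes sense. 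Applying the pointwise bound with $V$ in place of $V_S$ then gives the final inequality. The middle integral, with $V_S$, would be read as an upper integral if measurability of $V_S$ is not otherwise available.
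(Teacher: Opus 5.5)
Your argument is the paper's own: apply Lemma~\ref{LM:ibp} to the section $x_S\mapsto f(x_S,x_{S^c})$, which lies in $HBV_0(\R^S_0)$ by Property~\ref{PROP:recover}, sum the per-corner bounds so that the corner variations add up to $V_S$ with no extra factor, integrate in $x_{S^c}$, and use $V_S\le V$. The paper does not discuss measurability, and your treatment of it is sound with two exceptions: identifying the iterated improper integral with a limit of box integrals needs a line of justification (the inclusion--exclusion tail estimate of Lemma~\ref{rev:trunc}$(ii)$ supplies it, or you can simply pass to the limit one variable at a time), and your Borel claim is false, since a function non-increasing in each coordinate on an orthant of dimension at least two need not be Borel (on $(0,\infty)^2$ take the indicator of $\{u+v<1\}\cup B$ with $B$ a non-Borel subset of $\{u+v=1\}$) but is always Lebesgue measurable, because its superlevel sets are down-sets whose boundaries meet each line parallel to $(1,\dots,1)$ at most once and are therefore null, which is all the outer Lebesgue integral needs.
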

\begin{proof}
Apply Lemma~\ref{LM:ibp} to the section in the variables $S$ and sum over its
corners. Integrating the remaining variables gives the first inequality;
Property~\ref{PROP:recover} gives the second. For $S=\emptyset$ use $|f|\le V$ directly.
\end{proof}
The following elementary observation will be used to convert the values $V(f;\xi^{-1})$ appearing in \eqref{EQ:pointwise} into integrals to which the Hardy inequalities apply.
\begin{lemma}\label{LM:mono}
Let $\phi:\Ro\to[0,\infty]$ be even and non-increasing in $|t|$. Then, for every $a>0$,
\begin{equation}\label{EQ:mono}
\phi(2a)\leq\frac{1}{2\ln2}\int\limits_{\cB^{c}(a)}\frac{\phi(t)}{|t|}\,dt=\frac{1}{2\ln2}\int\limits_{\cB(1/(2a))}\frac{\phi\big((2s)^{-1}\big)}{|s|}\,ds.
\end{equation}
\end{lemma}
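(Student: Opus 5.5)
We restrict the integral on the right of \eqref{EQ:mono} to the two-sided annulus $a\le|t|<2a$ and use monotonicity there. Since $\phi$ is even and non-increasing in $|t|$, we have $\phi(t)\ge\phi(2a)$ for all $|t|\le 2a$. Dropping the part $|t|\ge2a$, which is legitimate because the integrand is nonnegative with values in $[0,\infty]$, gives
\[
 \int_{\cB^{c}(a)}\frac{\phi(t)}{|t|}\,dt\;\ge\;2\int_a^{2a}\frac{\phi(t)}{t}\,dt\;\ge\;2\phi(2a)\int_a^{2a}\frac{dt}{t}=2\ln2\,\phi(2a).
\]
The factor $2$ comes from the two symmetric pieces $[a,2a)$ and $(-2a,-a]$. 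Dividing by $2\ln 2$ yields the inequality in \eqref{EQ:mono}. If $\phi(2a)=\infty$ the same chain shows that the integral is infinite, so the inequality still holds in the extended sense. Measurability of $\phi$ is automatic, since $\phi$ is monotone in $|t|$.

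For the equality in \eqref{EQ:mono} we change variables on each half-line, putting $t=(2s)^{-1}$. On $t\ge a$ this maps to $0<s\le 1/(2a)$, and $dt/|t|=ds/|s|$ because $|dt|=|ds|/(2s^{2})$ and $1/|t|=2|s|$. The negative half-line is handled the same way. Hence
\[
 \int_{\cB^{c}(a)}\frac{\phi(t)}{|t|}\,dt=\int_{0<|s|\le 1/(2a)}\frac{\phi\big((2s)^{-1}\big)}{|s|}\,ds.
\]
The boundary points $|s|=1/(2a)$ form a null set, so the domain may be replaced by $\cB(1/(2a))$ with the origin excluded, as intended.

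The argument has no substantial obstacle. The only points needing care are bookkeeping ones: the factor $2$ from evenness, the use of an extended-valued $\phi$ with nonnegative integrands, and the null boundary set in the substitution.
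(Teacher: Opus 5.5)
Your proposal is correct and follows the paper's proof: restrict the integral to the annulus $a\le|t|\le2a$, where $\phi(t)\ge\phi(2a)$ and $\int_{a\le|t|\le2a}|t|^{-1}\,dt=2\ln2$, then substitute $t=(2s)^{-1}$ on each half-line. Your remarks on extended values, measurability and the null boundary set are harmless extra bookkeeping.
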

\begin{proof}
Since $\phi(t)\geq\phi(2a)$ for $a\leq|t|\leq2a$ and $\int_{a\leq|t|\leq2a}|t|^{-1}dt=2\ln2$, the inequality follows; the equality is the substitution $t=(2s)^{-1}$ on each half-line.
\end{proof}

\section{Iterated Hardy inequalities}\label{SEC:Hardy}
We recall the one-dimensional Hardy inequalities with power weights in the form we need; they follow from the weight characterizations in \cite{KP03}. For $a,b\in\R$, $1<p\leq q<\infty$ and $g\geq0$ on $\R$,
\begin{align}
\left(\int_{\R}|\xi|^{a}\Big(\int\limits_{\cB(\xi)}g(x)dx\Big)^{q}d\xi\right)^{1/q}&\lesssim\left(\int_{\R}|x|^{b}g(x)^{p}dx\right)^{1/p} \qquad \hbox{if } a+1<0, \label{EQ:H1}\\
\left(\int_{\R}|\xi|^{a}\Big(\int\limits_{\cB^{c}(\xi)}g(x)dx\Big)^{q}d\xi\right)^{1/q}&\lesssim\left(\int_{\R}|x|^{b}g(x)^{p}dx\right)^{1/p} \qquad \hbox{if } a+1>0, \label{EQ:H2}
\end{align}
where in both cases
\begin{equation}\label{EQ:b}
b=\frac{p}{q}(a+1)+p-1.
\end{equation}

Equivalently, logarithmic coordinates turn these scalar operators into convolution
with one-sided exponential kernels; Young's inequality yields the same bounds.

For $\vec\epsilon\in\{0,1\}^{d}$ and $g\geq0$ on $\Rd$ we define the operators
$$
(T_{\vec\epsilon}\,g)(\xi):=\int\limits_{Q_{S(\vec\epsilon)}(\xi)}g(x)\,dx, \qquad S(\vec\epsilon):=\{i\in\dd:\epsilon_{i}=1\}, \qquad \xi\in\Rdo,
$$
so that the $i$-th variable is integrated over $\cB(\xi_{i})$ if $\epsilon_{i}=0$ and over $\cB^{c}(\xi_{i})$ if $\epsilon_{i}=1$.
\begin{lemma}\label{LM:Hardy-d}
Let $1<p\leq q<\infty$, $\vec\epsilon\in\{0,1\}^{d}$ and $a_{1},\dots,a_{d}\in\R$ be such that
$$
a_{i}+1<0 \ \hbox{ if } \ \epsilon_{i}=0, \qquad a_{i}+1>0 \ \hbox{ if } \ \epsilon_{i}=1, \qquad i\in\dd,
$$
and put $b_{i}:=\frac{p}{q}(a_{i}+1)+p-1$. Then, for all $g\geq0$ on $\Rd$,
\begin{equation}\label{EQ:Hardy-d}
\left(\int_{\Rd}\prod_{i=1}^{d}|\xi_{i}|^{a_{i}}\,\big(T_{\vec\epsilon}\,g(\xi)\big)^{q}d\xi\right)^{1/q}\lesssim\left(\int_{\Rd}\prod_{i=1}^{d}|x_{i}|^{b_{i}}\,g(x)^{p}dx\right)^{1/p}.
\end{equation}
\end{lemma}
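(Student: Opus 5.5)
We prove \eqref{EQ:Hardy-d} by induction on $d$. The case $d=1$ is exactly \eqref{EQ:H1} when $\epsilon_1=0$ and \eqref{EQ:H2} when $\epsilon_1=1$, with $b_1$ given by \eqref{EQ:b}. The operator $T_{\vec\epsilon}$ factors as a composition of one-dimensional operators acting in separate variables. Write $\xi=(\xi',\xi_d)$, $x=(x',x_d)$ and set
\[
 G(\xi',x_d):=\int_{\prod_{i<d}E_i(\xi_i)}g(x',x_d)\,dx',
\]
so that
\[
 T_{\vec\epsilon}g(\xi)=\int_{E_d(\xi_d)}G(\xi',x_d)\,dx_d,
\]
where $E_i(\xi_i)$ is $\cB(\xi_i)$ or $\cB^c(\xi_i)$ according to $\epsilon_i$. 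By Tonelli all quantities are well defined in $[0,\infty]$ for nonnegative measurable $g$.

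First freeze $\xi'$ and apply the one-dimensional inequality in the variable $\xi_d$ to the function $x_d\mapsto G(\xi',x_d)$:
\[
 \int_\R|\xi_d|^{a_d}\big(T_{\vec\epsilon}g(\xi',\xi_d)\big)^q d\xi_d
 \lesssim\Big(\int_\R|x_d|^{b_d}G(\xi',x_d)^p\,dx_d\Big)^{q/p}.
\]
Multiplying by $\prod_{i<d}|\xi_i|^{a_i}$ and integrating in $\xi'$, the left side of \eqref{EQ:Hardy-d} raised to the power $q$ is bounded by
\[
 \int_{\R^{d-1}}\prod_{i<d}|\xi_i|^{a_i}\Big(\int_\R|x_d|^{b_d}G(\xi',x_d)^p\,dx_d\Big)^{q/p}d\xi'.
\]

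Since $q/p\ge1$, Minkowski's integral inequality in $L^{q/p}(d\mu(\xi'))$, with $d\mu(\xi')=\prod_{i<d}|\xi_i|^{a_i}\,d\xi'$, lets us interchange the $x_d$ integral with the $L^{q/p}$ norm in $\xi'$. The $1/q$-th power of this quantity is therefore at most
\[
 \Big(\int_\R|x_d|^{b_d}\Big(\int_{\R^{d-1}}\prod_{i<d}|\xi_i|^{a_i}G(\xi',x_d)^q\,d\xi'\Big)^{p/q}dx_d\Big)^{1/p}.
\]
For each fixed $x_d$, the inner integral is the left side of \eqref{EQ:Hardy-d} in dimension $d-1$, applied to $g(\cdot,x_d)$ with the exponents $(a_i)_{i<d}$ and indices $(\epsilon_i)_{i<d}$. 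The induction hypothesis bounds it by
\[
 \Big(\int_{\R^{d-1}}\prod_{i<d}|x_i|^{b_i}g(x',x_d)^p\,dx'\Big)^{q/p}.
\]
Substituting this bound and applying Tonelli gives the right side of \eqref{EQ:Hardy-d}, with a constant equal to the product of the one-dimensional constants.

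The only delicate step is the Minkowski interchange. It is where the hypothesis $p\le q$ enters, and it requires checking that the weights separate correctly: the $\xi'$ weights stay outside the $x_d$ integral, and $|x_d|^{b_d}$ does not depend on $\xi'$. Measurability of $G$ and the use of Tonelli for nonnegative functions, including infinite values, are routine.
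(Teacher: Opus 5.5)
Your proof is correct and follows essentially the same route as the paper: induction on $d$, the scalar Hardy inequality in one distinguished coordinate, Minkowski's integral inequality in $L^{q/p}$ (which is where $q\ge p$ is used) to interchange the order, and then the induction hypothesis applied to the slices. The only difference is cosmetic: you single out the last coordinate $x_d$, whereas the paper singles out $x_1$.
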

\begin{proof}
Induct on $d$, starting from \eqref{EQ:H1}--\eqref{EQ:H2}.
Let $G(x_1,\xi')$ be the $(d-1)$-variable operator applied to $g(x_1,\cdot)$.
The scalar estimate in the first coordinate and Minkowski's inequality,
with $q/p\ge1$, give
\[
 \|T_{\vec\epsilon}g\|_{L^q(\prod_i|\xi_i|^{a_i})}^{p}
 \lesssim\int_{\R}|x_1|^{b_1}
 \|G(x_1,\cdot)\|_{L^q(\prod_{i>1}|\xi_i|^{a_i})}^{p}dx_1.
\]
Apply the induction hypothesis to $G(x_1,\cdot)$ and integrate in $x_1$.
\end{proof}

\section{The forward Pitt inequality}\label{SEC:main}
Let $1<p\leq q<\infty$ and $\vec\alpha=(\alpha_{1},\dots,\alpha_{d})\in\Rd$. For $i\in\dd$ we define
\begin{equation}\label{EQ:beta}
\beta_{i}(t):=-\alpha_{i}-\frac1q-\frac1p+2-\Biggl\{\begin{array}{cc} \tau_{i}, & |t|<1;\\ \kappa_{i}, & |t|\geq1,\end{array} \qquad t\in\Ro,
\end{equation}
and we write
$$
\mathcal R(f):=\left(\int_{\Rd}\prod_{i=1}^{d}|x_{i}|^{\beta_{i}(x_{i})p}\,|f(x)|^{p}dx\right)^{1/p}.
$$
We first check that finiteness of $\mathcal R(f)$ makes everything in Section \ref{SEC:FT} well defined.
\begin{rem}\label{REM:finite}
Let $-1/q<\alpha_{i}<1-1/q$ for all $i$, let $f\in GM(\Rdo;\vec\tau,\vec\kappa)$ and $\mathcal R(f)<\infty$. By H\"older's inequality with the product weight,
\begin{equation}\label{EQ:Holder}
\begin{split}
I(f;x)&\leq\mathcal R(f)\prod_{i=1}^{d}\left(\int\limits_{\cB^{c}(x_{i})}\frac{dt}{\big(|t|^{\beta_{i}(t)}w_{i}(t)\big)^{p'}}\right)^{1/p'}\\
&\lesssim\mathcal R(f)\prod_{i=1}^{d}\big(1+|x_{i}|^{\alpha_{i}+\frac1q-1}\big), \qquad x\in\Rdo.
\end{split}
\end{equation}
Indeed, $\beta_i(t)+\tau_i=\gamma_i$ for $|t|<1$ and $\beta_i(t)+\kappa_i=\gamma_i$ for $|t|\geq1$, where $\gamma_i=2-\alpha_i-1/q-1/p$, and $\gamma_{i}p'>1$ is equivalent to $\alpha_{i}<1-1/q$; hence $\int_{|t|\geq1}|t|^{-\gamma_{i}p'}dt<\infty$ and $\int_{|x_{i}|\leq|t|<1}|t|^{-\gamma_{i}p'}dt\lesssim|x_{i}|^{1-\gamma_{i}p'}$, and $(1-\gamma_{i}p')/p'=\alpha_{i}+\frac1q-1$. Consequently the right-hand side of \eqref{EQ:GMd} is finite for every $x\in\Rdo$, so that
$$
V(f;x)\lesssim\mathcal R(f)\prod_{i=1}^{d}\big(1+|x_{i}|^{\alpha_{i}+\frac1q-1}\big).
$$
Since $\alpha_{i}+\frac1q-1>-1$, the right-hand side is integrable over any bounded box in any subset of the variables. In particular the last integral in \eqref{EQ:pointwise} is finite for every $S$ and every $\xi\in\Rdo$, so that all the pieces $\hat f_{S}$, and hence $\hat f$ in \eqref{EQ:FT-def}, are well defined. Moreover $|f|\leq V$ proves $f\in L^1_{loc}(\Rd)$ automatically, including across the coordinate hyperplanes (whose assigned values do not affect integrals).
\end{rem}
\begin{thm}\label{TH:main}
Let $d\geq1$, $1<p\leq q<\infty$, and $\tau_{i}\leq\kappa_{i}$ for $i\in\dd$. Let $f\in GM(\Rdo;\vec\tau,\vec\kappa)$ with $\mathcal R(f)<\infty$. If
$$
-\frac1q<\alpha_{i}<1-\frac1q, \qquad i=1,\dots,d,
$$
then
\begin{equation}\label{EQ:main}
\left(\int_{\Rd}\prod_{i=1}^{d}|\xi_{i}|^{\alpha_{i}q}\,|\hat f(\xi)|^{q}d\xi\right)^{1/q}\lesssim\left(\int_{\Rd}\prod_{i=1}^{d}|x_{i}|^{\beta_{i}(x_{i})p}\,|f(x)|^{p}dx\right)^{1/p},
\end{equation}
with $\beta_{i}$ given by \eqref{EQ:beta}. The implied constant depends on $d,p,q,\vec\alpha,\vec\tau,\vec\kappa$ and on the constants $C$, $m$ in \eqref{EQ:GMd}, but not on $f$.
\end{thm}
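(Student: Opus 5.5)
By Remark~\ref{REM:finite}, $\hat f=\sum_S\hat f_S$ is well defined, so it suffices to prove \eqref{EQ:main} for each of the $2^d$ pieces $\hat f_S$ separately. Fix $S\subseteq\dd$. Lemma~\ref{LM:pointwise} gives
\[
 |\hat f_S(\xi)|\le\frac{2^{|S|}}{\prod_{i\in S}|\xi_i|}\int_{Q(\xi^{-1}_{S^c})}V\big(f;(\xi^{-1}_S,x_{S^c})\big)\,dx_{S^c}.
\]
The coordinates in $S^c$ are already integrated over small boxes. In the coordinates in $S$, the quantity $V$ is evaluated at the point $\xi_S^{-1}$, and $V$ is even and non-increasing in each $|x_i|$. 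I will therefore apply Lemma~\ref{LM:mono} once in each variable $i\in S$ (with $a=1/(2|\xi_i|)$). This replaces the point value by an average, giving
\[
 V(f;(\xi_S^{-1},x_{S^c}))\lesssim\int_{Q(\xi_S/?)}\frac{V(f;(y_S,x_{S^c}))}{\prod_{i\in S}|y_i|}\,dy_S ,
\]
where in the original variables the average runs over $|y_i|\ge 1/(2|\xi_i|)$. Then I insert the GM bound \eqref{EQ:GMd}, $V(f;y)\le C\,I(f;y/m)$. The result is a bound of $|\hat f_S(\xi)|$ by a multiple integral of $|f(z)|/W(z)$ over a region described by the following nested conditions:
\begin{itemize}
\item for $i\in S^c$: $|x_i|<1/|\xi_i|$ and $|z_i|\ge |x_i|/m$;
\item for $i\in S$: $|y_i|\gtrsim 1/|\xi_i|$ and $|z_i|\ge|y_i|/m$.
\end{itemize}

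Next I compute the inner kernels by Fubini, integrating out $x_i$ and $y_i$ for fixed $z_i$. For $i\in S^c$, the $x_i$-integral gives $\min(m|z_i|,1/|\xi_i|)$. For $i\in S$, the $y_i$-integral gives $\log^+(m|z_i||\xi_i|)$ multiplied by the indicator of $|z_i|\gtrsim 1/|\xi_i|$. The logarithm is harmful for Hardy inequalities, so I would avoid it. Instead of Lemma~\ref{LM:mono} in the form above, I would use monotonicity of $V$ in $|y_i|$ directly: the point value at $2a$ is bounded by the average over $[a,2a]$. This localizes $y_i$ to a dyadic shell, and the kernel becomes simply $\mathbf 1_{\{|z_i|\ge c/|\xi_i|\}}$. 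The bound then reads
\[
 |\hat f_S(\xi)|\lesssim\prod_{i\in S}|\xi_i|^{-1}\int\prod_{i\in S^c}\min(|z_i|,|\xi_i|^{-1})\prod_{i\in S}\mathbf 1_{|z_i|\gtrsim|\xi_i|^{-1}}\frac{|f(z)|}{W(z)}dz.
\]
In each $i\in S^c$ I split $\min(|z_i|,|\xi_i|^{-1})$ into the region $|z_i|<1/|\xi_i|$ (weight $|z_i|$) and the region $|z_i|\ge1/|\xi_i|$ (weight $|\xi_i|^{-1}$). After expanding, $|\hat f_S|$ is dominated by a sum of operators $T_{\vec\epsilon}$ applied in the inverted variable $\eta=\xi^{-1}$, with $g(z)=|f(z)|\,\prod_i(\text{power of }|z_i|)/W(z)$ and explicit powers of $|\xi_i|$ in front. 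Each coordinate contributes a factor of one of two types:
\begin{itemize}
\item an \emph{interior} factor $\int_{|z|<|\eta|}|z|\,|f|/w\,dz$;
\item an \emph{exterior} factor $|\eta|\int_{|z|\ge|\eta|}|f|/w\,dz$.
\end{itemize}

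Finally I change variables $\eta_i=\xi_i^{-1}$ in the left side of \eqref{EQ:main}. This changes $|\xi_i|^{\alpha_iq}d\xi_i$ into $|\eta_i|^{-\alpha_iq-2}d\eta_i$, and the remaining powers of $|\eta_i|$ are absorbed into the weight. Lemma~\ref{LM:Hardy-d} then applies, and the conditions on the exponents must be checked coordinatewise:
\begin{itemize}
\item For an interior factor the exponent is $a_i=-\alpha_iq-2$, and $a_i+1<0$ iff $\alpha_i>-1/q$.
\item For an exterior factor the exponent is $a_i=q-\alpha_iq-2$, and $a_i+1>0$ iff $\alpha_i<1-1/q$.
\end{itemize}
So both admissibility conditions are exactly the hypotheses of the theorem. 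Computing $b_i$ from \eqref{EQ:b} in each case and combining with the $|z_i|$ and $w_i(z_i)^{-1}$ factors should produce the weight $|z_i|^{\beta_i(z_i)p}$ up to constants. Here one uses $w_i\asymp|z_i|^{\tau_i}$ for $|z_i|<1$ and $w_i\asymp|z_i|^{\kappa_i}$ for $|z_i|\ge1$, and both cases must land on the same piecewise $\beta_i$; the requirement $\tau_i\le\kappa_i$ enters only through $w_i$. Summing over $S$ and over the splittings finishes the proof.

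The main obstacle is the bookkeeping for the middle step. One must verify that the dyadic localization combined with the GM dilation by $m$ really yields clean indicator kernels, with constants depending only on $m$. One must also check that the exponent $b_i$ matches $\beta_i p$ exactly in both the interior and exterior cases. I expect this to reduce to the identity $\gamma_i=2-\alpha_i-1/p-1/q$ already seen in Remark~\ref{REM:finite}.
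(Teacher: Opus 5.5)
Your proposal is correct, and the exponent check you left open does close. For an interior factor, Lemma~\ref{LM:Hardy-d} gives $b_i=p-1-\alpha_ip-p/q$, and the extra factor $|z_i|$ in the input raises the weight to $b_i+p=p\gamma_i$. For an exterior factor, $b_i=2p-1-\alpha_ip-p/q=p\gamma_i$ directly. In both cases the weight on $|f|^p$ is $|z_i|^{p\gamma_i}w_i(z_i)^{-p}\asymp|z_i|^{p\beta_i(z_i)}$. The constants $1/(2m)$ and $m$ in your kernels cost only a dilation in $\eta$.

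Your route differs from the paper's in where general monotonicity enters. The paper never composes the kernels. It uses Lemma~\ref{LM:mono} with the full tail, inverts only the frequencies outside $S$, and applies a $(p,q)$ Hardy inequality with head integration in every variable. This gives $\|w_{\vec\alpha}\hat f_S\|_q\lesssim\|\prod_i|u_i|^{b_i}V(f;u)\|_p$ with $b_i=1-\alpha_i-1/p-1/q$. Only then does it insert the GM bound and apply a $(p,p)$ tail Hardy inequality to pass from $V$ to $f$.

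So the logarithmic kernel you wanted to avoid would have been harmless. It is just the composition of two one-dimensional Hardy operators, each bounded in the stated range, and the paper handles it implicitly by never multiplying them out. Your dyadic-shell localization is therefore a convenience rather than a necessity.

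What each version buys:
\begin{itemize}
\item The paper's factorization yields a GM-free intermediate estimate in terms of $V$. This is reused as Proposition~\ref{rev:forwardV} and in Lemma~\ref{rev:trunc}. Its $V\to f$ step is reused in Proposition~\ref{PROP:normequiv} and Remark~\ref{rev:S-equiv}.
\item Your version gives an explicit positive kernel dominating $|\hat f_S|$ pointwise. It needs a single application of Lemma~\ref{LM:Hardy-d}, with mixed head/tail $\vec\epsilon$, per splitting. The price is losing that modularity.
\end{itemize}
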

The weight on the right-hand side of \eqref{EQ:main} takes $2^{d}$ different forms on the regions $\prod_{i}\{|x_{i}|\lessgtr1\}$. For $\vec\tau=\vec\kappa=\vec1$ it reduces to the product weight $\prod_{i}|x_{i}|^{\beta_{i}p}$ with $\alpha_{i}+\beta_{i}=1-\frac1p-\frac1q$, see Corollary \ref{COR:classical}.
\begin{proof}
Write $V(u)=V(f;u)$ and $b_i=1-\alpha_i-1/p-1/q$.
\emph{Step 1: from $\widehat f$ to $V$.}
By Lemmas~\ref{LM:pointwise} and \ref{LM:mono},
\begin{align*}
 |\widehat f_S(\xi)|&\lesssim\prod_{i\in S}|\xi_i|^{-1}
 \int_{Q(\xi_S)\times Q(\xi_{S^c}^{-1})}g_S(s_S,x_{S^c})\,ds_Sdx_{S^c},\\
 g_S(s_S,x_{S^c})&=\prod_{i\in S}|s_i|^{-1}V((2s_S)^{-1},x_{S^c}).
\end{align*}
Invert the frequencies outside $S$. Lemma~\ref{LM:Hardy-d}, with head integration
in every variable, applies with
\[
 a_i=\begin{cases}(\alpha_i-1)q,&i\in S,\\-\alpha_iq-2,&i\notin S.\end{cases}
\]
Indeed $a_i+1<0$ is respectively $\alpha_i<1-1/q$ and $\alpha_i>-1/q$.
After absorbing the factors $|s_i|^{-p}$, the input exponent for $i\in S$ is
$(\alpha_i-1)p+p/q-1$. Substituting $s_i=(2u_i)^{-1}$ turns it into
$-\alpha_ip-p/q+p-1=pb_i$, also the exponent for $i\notin S$.
Thus for every $S$,
\begin{equation}\label{EQ:step1}
 \|w_{\vec\alpha}\widehat f_S\|_q
 \lesssim\left\|\prod_i|u_i|^{b_i}V(u)\right\|_p.
\end{equation}
\emph{Step 2: from $V$ to $f$.}
The GM bound, dilation $u=mv$, and Lemma~\ref{LM:Hardy-d} with tail integration
in every variable and equal exponents $p$ give
\[
 \left\|\prod_i|u_i|^{b_i}V(u)\right\|_p
 \lesssim\left\|\prod_i|v_i|^{b_i}I(f;v)\right\|_p
 \lesssim\left\|\prod_i\frac{|x_i|^{b_i+1}}{w_i(x_i)}f(x)\right\|_p.
\]
The Hardy condition $pb_i+1>0$ is $\alpha_i<1-1/q$.
Since $b_i+1-\tau_i$ and $b_i+1-\kappa_i$ are the two values of
$\beta_i$, the last norm is comparable to $\mathcal R(f)$.
Sum \eqref{EQ:step1} over $S$ to conclude.
\end{proof}
\begin{cor}\label{COR:classical}
Let $1<p\leq q<\infty$, $-1/q<\alpha_{i}<1-1/q$ and $\alpha_{i}+\beta_{i}=1-\frac1p-\frac1q$ for $i\in\dd$. Then, for every $f\in GM(\Rdo;\vec1,\vec1)$ with finite right-hand side,
$$
\left(\int_{\Rd}\prod_{i=1}^{d}|\xi_{i}|^{\alpha_{i}q}\,|\hat f(\xi)|^{q}d\xi\right)^{1/q}\lesssim\left(\int_{\Rd}\prod_{i=1}^{d}|x_{i}|^{\beta_{i}p}\,|f(x)|^{p}dx\right)^{1/p}.
$$
In particular, this holds for all Hardy-monotone functions of Example \ref{EX:monotone}.
\end{cor}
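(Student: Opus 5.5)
The plan is to specialize Theorem~\ref{TH:main} to $\vec\tau=\vec\kappa=\vec1$ and check that the weights and hypotheses line up. With $\tau_i=\kappa_i=1$, the definition \eqref{EQ:beta} gives
\[
\beta_i(t)=1-\alpha_i-\tfrac1p-\tfrac1q
\]
for every $t\in\Ro$. This is independent of $t$, and it is exactly the constant $\beta_i$ fixed in the corollary by $\alpha_i+\beta_i=1-\frac1p-\frac1q$. Consequently $\mathcal R(f)$ coincides with the right-hand side of the corollary. The hypothesis ``finite right-hand side'' is therefore precisely $\mathcal R(f)<\infty$, and the constraint $\tau_i\le\kappa_i$ holds trivially.

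Next I verify the remaining hypotheses of Theorem~\ref{TH:main}. The ranges $1<p\le q<\infty$ and $-1/q<\alpha_i<1-1/q$ are assumed verbatim. Membership $f\in GM(\Rdo;\vec1,\vec1)$ is the required class condition. By Remark~\ref{REM:finite}, $\hat f$ is well defined through \eqref{EQ:FT-def}, so the left-hand side makes sense. Invoking \eqref{EQ:main} then yields the stated inequality, with constants depending only on $d,p,q,\vec\alpha$ and the GM constants $C,m$.

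For the final sentence, I will use Example~\ref{EX:monotone}. It shows that every Hardy-monotone $f$ (nonnegative, with the coordinate-tail vanishing and the orthant sign condition on mixed differences) lies in $GM(\Rdo;\vec1,\vec1)$, with $C=2^d(\log m)^{-d}$ for any fixed $m>1$, for instance $m=2$. One point to check there is $f\in HBV_0(\Rdo)$. Condition $(b)$ is assumed, and condition $(a)$ follows because $V(f;x)=\sum_\sigma f(\sigma|x|)<\infty$. With that in place, the first part of the corollary applies to these functions whenever the right-hand side is finite.

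The only step needing care is the identification of the Hardy-monotone class with a GM class with uniform constants, but that is supplied by Example~\ref{EX:monotone}. Everything else is a direct substitution, so I expect no real obstacle.
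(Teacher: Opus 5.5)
Your proposal is correct and follows the paper's intended argument. The corollary is Theorem~\ref{TH:main} with $\vec\tau=\vec\kappa=\vec1$: there \eqref{EQ:beta} collapses to the constant $\beta_i=1-\alpha_i-\frac1p-\frac1q$, so $\mathcal R(f)$ is exactly the stated right-hand side. The final sentence follows from Example~\ref{EX:monotone}, which places Hardy-monotone functions in $GM(\Rdo;\vec1,\vec1)$ with uniform constants (e.g.\ $m=2$, $C=2^d(\log 2)^{-d}$), and your check of condition $(a)$ in Definition~\ref{DEFI:HBV} fills in a detail the paper leaves implicit.
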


For $p=q$, this is a weighted Hardy--Littlewood estimate.

\begin{cor}\label{COR:support}
Let the hypotheses of Theorem \ref{TH:main} be satisfied.
\begin{enumerate}
\item[$(i)$] If $\operatorname{supp}f\subset Q(\vec1)=\{|x_{i}|<1,\ i\in\dd\}$, then \eqref{EQ:main} holds with $\beta_{i}=-\alpha_{i}-\frac1q-\frac1p+2-\tau_{i}$.
\item[$(ii)$] If $\operatorname{supp}f\subset Q^{c}(\vec1)=\{|x_{i}|\geq1,\ i\in\dd\}$, then \eqref{EQ:main} holds with $\beta_{i}=-\alpha_{i}-\frac1q-\frac1p+2-\kappa_{i}$.
\item[$(iii)$] If $\tau_{i}=\kappa_{i}$ for all $i$, then $\beta_{i}(t)\equiv-\alpha_{i}-\frac1q-\frac1p+2-\tau_{i}$ is constant.
\end{enumerate}
\end{cor}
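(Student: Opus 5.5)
This corollary follows directly from Theorem~\ref{TH:main}. The idea is to check that, under each support or parameter assumption, the piecewise weight $\prod_i|x_i|^{\beta_i(x_i)p}$ in $\mathcal R(f)$ coincides with the constant-exponent weight stated, wherever $f$ can be nonzero. No new analysis is needed: the class $GM(\Rdo;\vec\tau,\vec\kappa)$, the range $-1/q<\alpha_i<1-1/q$ and the constants $C,m$ are unchanged, and so is the Fourier-side norm. The only thing to verify is that the finiteness hypothesis $\mathcal R(f)<\infty$ is equivalent to finiteness of the modified right-hand side.

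\emph{Case (i).} If $\operatorname{supp}f\subset Q(\vec1)$, then $|f(x)|^p$ vanishes unless $|x_i|<1$ for every $i$. On that set \eqref{EQ:beta} gives $\beta_i(x_i)=-\alpha_i-\frac1q-\frac1p+2-\tau_i$. Hence
\[
\mathcal R(f)^p=\int_{Q(\vec1)}\prod_i|x_i|^{(2-\alpha_i-1/q-1/p-\tau_i)p}|f(x)|^p\,dx ,
\]
which is exactly the right-hand side with the constant exponents of (i). Theorem~\ref{TH:main} then gives \eqref{EQ:main} with these exponents.

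\emph{Case (ii).} This is identical, using the branch $|x_i|\ge1$ of \eqref{EQ:beta}, which produces $\kappa_i$. The boundary $|x_i|=1$ belongs to the $\kappa_i$ branch, matching $\cB^c$. Even if it did not, it is a null set and would not affect the integral.

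\emph{Case (iii).} If $\tau_i=\kappa_i$, the two branches of \eqref{EQ:beta} agree, so $\beta_i$ is constant and the weight is a pure power.

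The only point needing care is the hypothesis of Theorem~\ref{TH:main}: it is stated as $\mathcal R(f)<\infty$, and in each case this equals the finiteness of the stated constant-exponent norm. The values of $f$ on the coordinate hyperplanes and off the support are irrelevant, since membership in the GM class is assumed in full. There is no real obstacle; the statement is a specialization of the theorem.
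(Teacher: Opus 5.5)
Your proposal is correct and matches the paper. The paper gives no separate proof for this corollary and treats it as an immediate specialization of Theorem~\ref{TH:main}: on the set where $f$ can be nonzero, or when $\tau_i=\kappa_i$, the piecewise exponent $\beta_i(x_i)$ reduces to a single branch, so $\mathcal R(f)$ coincides with the stated constant-exponent norm.
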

\begin{prop}[Equivalence of spatial norms]\label{PROP:normequiv}
Let $1<p\leq q<\infty$, $-1/q<\alpha_i<1-1/q$, and put $b_i=1-1/p-1/q-\alpha_i$. For any $f\in GM(\Rdo;\vec\tau,\vec\kappa)$ with $\mathcal R(f)<\infty$,
\begin{equation}\label{EQ:spatial-control}
 \left\|\prod_i|x_i|^{b_i}f\right\|_p\lesssim\mathcal R(f).
\end{equation}
If $\tau_i\leq1\leq\kappa_i$ for every $i$, then
\[
 \mathcal R(f)\leq\left\|\prod_i|x_i|^{b_i}f\right\|_p\lesssim\mathcal R(f).
\]
The constants depend on the parameters and the GM constants.
\end{prop}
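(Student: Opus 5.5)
We would derive \eqref{EQ:spatial-control} from the pointwise bound $|f|\le V$ (Property~\ref{PROP:recover}) combined with Step~2 of the proof of Theorem~\ref{TH:main}. That step already establishes
\[
 \Big\|\prod_i|u_i|^{b_i}V(f;u)\Big\|_p\lesssim\Big\|\prod_i\frac{|x_i|^{b_i+1}}{w_i(x_i)}f(x)\Big\|_p\asymp\mathcal R(f).
\]
Its ingredients are the GM bound \eqref{EQ:GMd}, the dilation $u=mv$, and Lemma~\ref{LM:Hardy-d} with tail integration in every variable and $q=p$. The Hardy condition there, $pb_i+1>0$, is equivalent to $\alpha_i<1-1/q$, which is assumed. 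The final comparability holds because $|x_i|/w_i(x_i)\asymp|x_i|^{1-\tau_i}$ for $|x_i|<1$ and $\asymp|x_i|^{1-\kappa_i}$ for $|x_i|\ge1$, which matches $|x_i|^{\beta_i(x_i)-b_i}$.

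Inserting $|f(u)|\le V(f;u)$ on the left then gives \eqref{EQ:spatial-control}, with constants depending only on the parameters and on $C,m$. The finiteness of $\mathcal R(f)$ guarantees, via Remark~\ref{REM:finite}, that $I(f;\cdot)$ is finite, so every quantity in the chain is meaningful. No circularity arises, since Step~2 never uses \eqref{EQ:spatial-control} itself.

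For the two-sided statement under $\tau_i\le1\le\kappa_i$, the inequality $\mathcal R(f)\le\|\prod_i|x_i|^{b_i}f\|_p$ is a pointwise comparison of weights. We have $\beta_i(t)=b_i+1-\tau_i\ge b_i$ when $|t|<1$, so $|t|^{\beta_i(t)}\le|t|^{b_i}$ there. We also have $\beta_i(t)=b_i+1-\kappa_i\le b_i$ when $|t|\ge1$, so again $|t|^{\beta_i(t)}\le|t|^{b_i}$. Multiplying over $i$ and integrating gives the inequality with constant $1$. The reverse inequality is \eqref{EQ:spatial-control}.

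The only point needing care is confirming that the Hardy step applies with the exact exponents used in Step~2. By \eqref{EQ:b} with $q=p$, the output weight exponent is $b=a+p$ where $a=pb_i$, that is, $|x_i|^{pb_i+p}$. The factor $w_i^{-p}$ inherited from $I(f;\cdot)$ then produces $|x_i|^{p(b_i+1)}w_i(x_i)^{-p}$, as required. The dilation by $m$ changes only constants. I expect no genuine obstacle: the proposition is essentially a by-product of the proof of Theorem~\ref{TH:main}.
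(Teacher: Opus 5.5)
Your proposal is correct and follows the paper's proof essentially verbatim: you bound $|f|\le V(f;\cdot)$ via Property~\ref{PROP:recover}, use Step~2 of the proof of Theorem~\ref{TH:main} to control the weighted variation by $\mathcal R(f)$, and, when $\tau_i\le1\le\kappa_i$, obtain the reverse inequality from the pointwise comparison $|t|^{\beta_i(t)}\le|t|^{b_i}$. Your extra check of the Hardy exponents ($a=pb_i$, output exponent $pb_i+p$, admissibility equivalent to $\alpha_i<1-1/q$) is accurate and spells out what the paper leaves implicit in Step~2.
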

\begin{proof}
By Property \ref{PROP:recover}, $|f(x)|\leq V(f;x)$. Step 2 of the proof of Theorem \ref{TH:main} controls the weighted variation with exponent $pb_i$. That step therefore gives
\[
 \left\|\prod_i|x_i|^{b_i}f\right\|_p
 \leq\left\|\prod_i|x_i|^{b_i}V(f;x)\right\|_p
 \lesssim\mathcal R(f),
\]
proving \eqref{EQ:spatial-control}. If the parameters straddle one, then $\beta_i(t)-b_i=1-\tau_i\geq0$ for $|t|<1$ and $\beta_i(t)-b_i=1-\kappa_i\leq0$ otherwise. Thus $|t|^{\beta_i(t)}\leq|t|^{b_i}$, proving the reverse inequality. In particular, there is no function in this class with finite refined norm and infinite classical spatial norm.
\end{proof}

\section{Signed Boas equivalences}\label{SEC:rev}

Throughout $1<p<\infty$, $-1/p<\alpha_j<1-1/p$ and $\beta_j:=1-2/p-\alpha_j$; then also
$-1/p<\beta_j<1-1/p$, so that $|\xi_j|^{\alpha_jp}$ and $|x_j|^{\beta_jp}$ are Muckenhoupt $A_p$
weights on $\R$ --- the reason every Hardy step below falls in range. We write
$w_{\vec\alpha}(\xi):=\prod_j|\xi_j|^{\alpha_j}$, and similarly $w_{\vec\beta}$.
Set
\begin{equation}\label{rev:Phi}
 \Phi(x)=\int_{Q^c(x)}\frac{|f(y)|}{\prod_j|y_j|}\,dy.
\end{equation}
For classical parameters $I(f;x)=2^{-d}\Phi(x)$; absorbing $2^{-d}$ into the
constant, the GM hypothesis becomes
\begin{equation}\label{rev:GM}
 V(f;x)\le C\Phi(x/m).
\end{equation}
Enlarging $m$ if necessary, we take $m>1$. All constants below refer to fixed
$C,m$. For a general $HBV_0$ function we use the standing condition
\begin{equation}\label{rev:standing}
 \int_{Q(b_{S^c})}V(f;(a_S,x_{S^c}))\,dx_{S^c}<\infty
 \quad(\emptyset\ne S\subsetneq[d],\ a_S,b_{S^c}>0).
\end{equation}
Together with local integrability, it makes every piece in \eqref{EQ:pieces}
well defined. Conditions indexed by empty collections are omitted.

The following isolates what the proof of Theorem~\ref{TH:main} gives before general
monotonicity is used.

\begin{prop}\label{rev:forwardV}
Let $f\in HBV_0(\R^d_0)\cap L^1_{loc}(\R^d)$ satisfy \eqref{rev:standing} and let
$\hat f=\sum_S\hat f_S$. Then $\|w_{\vec\alpha}\hat f\|_p\le K_0\|w_{\vec\beta}V(f;\cdot)\|_p$,
with $K_0$ depending only on $d,p,\vec\alpha$.
\end{prop}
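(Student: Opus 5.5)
The plan is to repeat Step~1 of the proof of Theorem~\ref{TH:main} with $q=p$, without invoking the GM condition \eqref{rev:GM} at any point. Write $\hat f=\sum_S\hat f_S$. By Minkowski's inequality it suffices to show $\|w_{\vec\alpha}\hat f_S\|_p\lesssim\|w_{\vec\beta}V\|_p$ for each $S\subseteq\dd$, where $V=V(f;\cdot)$.

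First, I would check that each piece satisfies the pointwise bound \eqref{EQ:pointwise}. Lemma~\ref{LM:pointwise} only needs $f\in HBV_0(\Rdo)$ and finiteness of the outer integral of $V$. The standing condition \eqref{rev:standing} supplies this finiteness for $\emptyset\ne S\subsetneq\dd$. For $S=\dd$ no outer integral occurs, and for $S=\emptyset$ local integrability gives $|\hat f_\emptyset(\xi)|\le\int_{Q(\xi^{-1})}|f|\le\int_{Q(\xi^{-1})}V$ by Property~\ref{PROP:recover}. So in all cases
\[
|\hat f_S(\xi)|\le \frac{2^{|S|}}{\prod_{i\in S}|\xi_i|}\int_{Q(\xi^{-1}_{S^c})}V(\xi_S^{-1},x_{S^c})\,dx_{S^c}.
\]
Next, since $V$ is even and non-increasing in each $|x_i|$, I would apply Lemma~\ref{LM:mono} successively in each $i\in S$. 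This replaces $V(\xi_S^{-1},\cdot)$ by
\[
\int_{Q(\xi_S)}\prod_{i\in S}|s_i|^{-1}V\big((2s_S)^{-1},\cdot\big)\,ds_S,
\]
up to a constant, with the argument $\xi_i^{-1}$ written as $2\cdot(2\xi_i)^{-1}$. Thus $|\hat f_S(\xi)|$ is dominated by $\prod_{i\in S}|\xi_i|^{-1}$ times a head operator in the $S$-variables and a head operator over $Q(\xi^{-1}_{S^c})$ in the remaining variables, applied to
\[
g_S(s_S,x_{S^c})=\prod_{i\in S}|s_i|^{-1}V\big((2s_S)^{-1},x_{S^c}\big).
\]

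Then I would substitute $\eta_i=\xi_i^{-1}$ for $i\notin S$, which turns $Q(\xi^{-1}_{S^c})$ into a head region $Q(\eta_{S^c})$. The Jacobian changes the weight exponent in $\eta_i$ to $-\alpha_ip-2$. For $i\in S$ the exponent is $(\alpha_i-1)p$ after absorbing $|\xi_i|^{-p}$. Lemma~\ref{LM:Hardy-d} with $q=p$ and $\vec\epsilon=0$ needs $(\alpha_i-1)p+1<0$ and $-\alpha_ip-1<0$, that is, $-1/p<\alpha_i<1-1/p$, which is exactly our range. The output exponents are $b_i=a_i+p$. For $i\notin S$ this gives $-\alpha_ip+p-2=\beta_ip$. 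For $i\in S$ it gives $\alpha_ip$, applied to $|s_i|^{-p}V((2s)^{-1})$. The substitution $u_i=(2s_i)^{-1}$ then produces $|u_i|^{-\alpha_ip+p-2}=|u_i|^{\beta_ip}$ up to powers of $2$. Hence $\|w_{\vec\alpha}\hat f_S\|_p\lesssim\|w_{\vec\beta}V\|_p$ with constants depending only on $d,p,\vec\alpha$, and summing over $S$ gives $K_0$.

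The only delicate point is the bookkeeping of exponents under the two inversions; the measure-theoretic justification is already supplied by \eqref{rev:standing} and Lemma~\ref{LM:pointwise}. If $\|w_{\vec\beta}V\|_p=\infty$ there is nothing to prove.
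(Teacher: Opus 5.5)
Your proposal is correct and follows the paper's proof: the paper observes that Step~1 of the proof of Theorem~\ref{TH:main} never uses the GM hypothesis, and that with $q=p$ its output exponent $pb_j=-\alpha_jp+p-2$ equals $\beta_jp$; it then sums over the $2^d$ pieces. You write out that step's exponent bookkeeping explicitly, together with the roles of \eqref{rev:standing} and local integrability in justifying \eqref{EQ:pointwise}, which the paper leaves implicit.
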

\begin{proof}
Step 1 of the proof of Theorem~\ref{TH:main} uses no GM hypothesis.
With $q=p$, its weight exponent satisfies $pb_j=-\alpha_jp+p-2=\beta_jp$.
Apply that estimate to each piece and sum over the $2^d$ subsets.
\end{proof}

\subsection{Signed averages and the Fourier transform}
\begin{lemma}\label{rev:parseval}
Let $f\in L^1(\R^d)$ with $\|w_{\vec\alpha}\hat f\|_p<\infty$. Then for every box
$R=\prod_j[a_j,b_j]$,
\begin{equation}\label{rev:Fbound}
\Big|\int_Rf\Big|\ \le\ \frac1{(2\pi)^d}\int_{\R^d}|\hat f(\eta)|\prod_{j=1}^d\min\Big(b_j-a_j,\frac2{|\eta_j|}\Big)d\eta,
\end{equation}
and the right-hand side is finite. In particular, with
\begin{equation}\label{rev:calG}
\mathcal G(s):=\int_{\R^d}|\hat f(\eta)|\prod_{j=1}^d\min\big(1,|s_j\eta_j|^{-1}\big)\,d\eta,
\qquad s\in\R^d_0,
\end{equation}
one has $\frac1{|R|}\big|\int_Rf\big|\le(2\pi)^{-d}\max(1,\tfrac2\delta)^d\,\mathcal G(a)$
whenever $R=\prod_j[a_j,a_j(1+\delta)]$ with $a\in(0,\infty)^d$, and symmetrically in the other
orthants.
\end{lemma}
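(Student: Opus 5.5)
Our plan is to pair $f$ with the indicator of $R$ and pass to the Fourier side. The main obstacle is that $\hat f$ is only known to satisfy $w_{\vec\alpha}\hat f\in L^p$, not $\hat f\in L^1$, so Fourier inversion cannot be applied directly to $f$. We therefore argue by approximation and reduce finiteness to a Hölder estimate.

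\emph{Step 1: finiteness of the right-hand side.} Put $m_R(\eta)=\prod_j\min(b_j-a_j,2/|\eta_j|)$. The integrand factorizes against the weights, and Hölder gives
\[
\int|\hat f|\,m_R\le\|w_{\vec\alpha}\hat f\|_p\prod_j\Big(\int_{\R}|\eta_j|^{-\alpha_jp'}\min(\ell_j,2/|\eta_j|)^{p'}d\eta_j\Big)^{1/p'},
\qquad \ell_j=b_j-a_j .
\]
Each one-dimensional factor is finite. Near $0$ it converges because $\alpha_jp'<1$, which is equivalent to $\alpha_j<1-1/p$. At infinity it behaves like $|\eta_j|^{-(\alpha_j+1)p'}$, which is integrable because $(\alpha_j+1)p'>1$, i.e.\ $\alpha_j>-1/p$.

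\emph{Step 2: the pairing identity.} With the normalization $\hat f(\xi)=\int f e^{ix\cdot\xi}$, we have $\widehat{\mathbf 1_R}(\eta)=\prod_j(e^{ib_j\eta_j}-e^{ia_j\eta_j})/(i\eta_j)$. This satisfies $|\widehat{\mathbf 1_R}|\le m_R$, since $|e^{ib\eta}-e^{ia\eta}|\le\min(|b-a||\eta|,2)$.

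To justify the identity we insert a Gaussian. Let $\phi_\varepsilon=\mathbf 1_R*g_\varepsilon$, where $g_\varepsilon$ is an approximate identity with $\hat g_\varepsilon(\eta)=e^{-\varepsilon|\eta|^2}$. Then $\phi_\varepsilon$ is Schwartz, and the multiplication formula for $f\in L^1$ gives
\[
\int f\,\phi_\varepsilon=(2\pi)^{-d}\int\hat f(\eta)\,\widehat{\mathbf 1_R}(-\eta)\,e^{-\varepsilon|\eta|^2}d\eta .
\]
This is Fubini applied to $f(x)\widehat{\mathbf 1_R}(-\eta)e^{-\varepsilon|\eta|^2}e^{ix\cdot\eta}$, which is absolutely integrable because $f\in L^1$ and $\widehat{\mathbf 1_R}\,e^{-\varepsilon|\eta|^2}\in L^1$. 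We then let $\varepsilon\to0$ on both sides.
\begin{itemize}
\item On the left, $\phi_\varepsilon\to\mathbf 1_R$ almost everywhere (off $\partial R$) with $|\phi_\varepsilon|\le1$. Dominated convergence with $f\in L^1$ gives $\int f\phi_\varepsilon\to\int_Rf$.
\item On the right, the integrand is dominated by $|\hat f|\,m_R$, which is integrable by Step 1. So the right side converges to $(2\pi)^{-d}\int\hat f(\eta)\widehat{\mathbf 1_R}(-\eta)\,d\eta$.
\end{itemize}
Taking absolute values and using $|\widehat{\mathbf 1_R}|\le m_R$ yields \eqref{rev:Fbound}.

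\emph{Step 3: the averaged form.} For $R=\prod_j[a_j,a_j(1+\delta)]$ with $a_j>0$ we have $\ell_j=a_j\delta$. Then
\[
\frac{\min(a_j\delta,2/|\eta_j|)}{a_j\delta}=\min\big(1,2/(\delta|a_j\eta_j|)\big)\le\max(1,2/\delta)\min\big(1,|a_j\eta_j|^{-1}\big).
\]
Dividing \eqref{rev:Fbound} by $|R|=\prod_j a_j\delta$ gives $\frac1{|R|}|\int_Rf|\le(2\pi)^{-d}\max(1,2/\delta)^d\mathcal G(a)$. The other orthants follow by the same computation with $|a_j|$ in place of $a_j$.
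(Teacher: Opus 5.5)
Your proposal is correct and follows essentially the same route as the paper: H\"older against the weight $w_{\vec\alpha}$ for finiteness, mollification of $\chi_R$, Fubini for the $L^1$ pairing, dominated convergence on both sides with dominant $|\hat f|\,m_R$, and the same elementary comparison $\min(1,2/(\delta t))\le\max(1,2/\delta)\min(1,1/t)$ for the averaged form. The only cosmetic difference is that the paper mollifies with an arbitrary nonnegative Schwartz $\rho$ and uses $|\hat\rho(\epsilon\eta)|\le1$, whereas you use a Gaussian with multiplier $e^{-\varepsilon|\eta|^2}$.
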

\begin{proof}
With the normalization of Section~\ref{SEC:HBV},
$\widehat{\chi_R}(\eta)=\prod_j\frac{e^{ib_j\eta_j}-e^{ia_j\eta_j}}{i\eta_j}$, so
$|\widehat{\chi_R}(\eta)|\le\prod_j\min(b_j-a_j,2/|\eta_j|)$. By H\"older's inequality,
$$
\int|\hat f||\widehat{\chi_R}|\,d\eta\le\|w_{\vec\alpha}\hat f\|_p
\prod_{j=1}^d\Big(\int_\R|\eta_j|^{-\alpha_jp'}\min\big(b_j-a_j,2|\eta_j|^{-1}\big)^{p'}d\eta_j\Big)^{1/p'},
$$
and each factor converges at $0$ because $\alpha_jp'<1$ and at infinity because
$(1+\alpha_j)p'>1$; both are the hypothesis $-1/p<\alpha_j<1-1/p$. Hence
$\hat f\widehat{\chi_R}\in L^1(\R^d)$.

Let $\rho\in\mathcal S(\R^d)$, $\rho\ge0$, $\int\rho=1$, $\rho_\epsilon=\epsilon^{-d}\rho(\cdot/\epsilon)$
and $g_\epsilon:=\chi_R*\rho_\epsilon\in\mathcal S(\R^d)$. Since $f\in L^1$ and $\hat g_\epsilon\in L^1$,
Fubini gives $\int f\overline{g_\epsilon}=(2\pi)^{-d}\int\hat f\overline{\hat g_\epsilon}$ with
$\hat g_\epsilon=\widehat{\chi_R}\hat\rho(\epsilon\cdot)$, $|\hat\rho(\epsilon\eta)|\le1$. As
$\epsilon\to0$, $g_\epsilon\to\chi_R$ a.e.\ with $0\le g_\epsilon\le1$, so the left side tends
to $\int_Rf$ and the right side to $(2\pi)^{-d}\int\hat f\overline{\widehat{\chi_R}}$, both by
dominated convergence. This is \eqref{rev:Fbound}. For the last statement, $b_j-a_j=\delta a_j$,
$|R|=\delta^d\prod a_j$, and
$\frac1{\delta a_j}\min(\delta a_j,2|\eta_j|^{-1})\le\max(1,\frac2\delta)\min(1,\frac1{a_j|\eta_j|})$.
\end{proof}

\begin{lemma}\label{rev:calG-norm}
Let $H$ be measurable on $\R^d$ with $\|w_{\vec\alpha}H\|_p<\infty$, and define
\[
 \mathcal G_H(s)=\int_{\R^d}|H(\eta)|\prod_{j=1}^d\min(1,|s_j\eta_j|^{-1})\,d\eta.
\]
Then $\|w_{\vec\beta}\mathcal G_H\|_p\lesssim\|w_{\vec\alpha}H\|_p$.
In particular this applies to $H=\hat f$ and $\mathcal G_H=\mathcal G$, whether $\hat f$
is a classical or an improper transform; no $L^1$ hypothesis on $f$ is used.
\end{lemma}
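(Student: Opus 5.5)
We split the kernel $\min(1,|s_j\eta_j|^{-1})$ in each coordinate into its two regimes, $|\eta_j|<1/|s_j|$ (kernel equal to $1$) and $|\eta_j|\ge1/|s_j|$ (kernel equal to $|s_j\eta_j|^{-1}$). This gives
\[
 \mathcal G_H(s)=\sum_{S\subseteq[d]}\prod_{j\in S}|s_j|^{-1}\int_{Q(s_{S^c}^{-1})\times Q^c(s_S^{-1})}|H(\eta)|\prod_{j\in S}|\eta_j|^{-1}\,d\eta,
\]
with $Q(s^{-1}_{S^c})$ acting on the coordinates outside $S$ and $Q^c(s_S^{-1})$ on those in $S$. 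Each term is a product Hardy operator of the type $T_{\vec\epsilon}$ applied to $g_S(\eta)=|H(\eta)|\prod_{j\in S}|\eta_j|^{-1}$, but evaluated at the inverted point $s^{-1}$. So it suffices to bound each term in $L^p(w_{\vec\beta}^p)$ by $\|w_{\vec\alpha}H\|_p$ and then sum over the $2^d$ subsets.

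For a fixed $S$ we substitute $\xi_j=s_j^{-1}$ in the outer norm. This turns $\int|s_j|^{\beta_jp}\,ds_j$ into $\int|\xi_j|^{-\beta_jp-2}\,d\xi_j$, and the prefactor $|s_j|^{-p}$ for $j\in S$ becomes $|\xi_j|^{p}$. The term is then
\[
 \int\prod_{j\notin S}|\xi_j|^{a_j}\prod_{j\in S}|\xi_j|^{a_j}\bigl(T_{\vec\epsilon}g_S(\xi)\bigr)^p\,d\xi ,
\]
where $\epsilon_j=0$ (head integration) for $j\notin S$ and $\epsilon_j=1$ (tail integration) for $j\in S$. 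The exponents are $a_j=-\beta_jp-2$ for $j\notin S$ and $a_j=-\beta_jp-2+p$ for $j\in S$. We now apply Lemma~\ref{LM:Hardy-d} with $q=p$, checking each coordinate.

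\emph{Coordinates $j\notin S$.} The Hardy condition is $a_j+1=-\beta_jp-1<0$, which is $\beta_j>-1/p$. The output exponent is $b_j=a_j+1+p-1=-\beta_jp-2+p=\alpha_jp$, using $\beta_j+\alpha_j=1-2/p$.

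\emph{Coordinates $j\in S$.} The Hardy condition is $a_j+1=p-1-\beta_jp>0$, which is $\beta_j<1-1/p$. The output exponent is $b_j=2p-2-\beta_jp=\alpha_jp+p$. This extra $|\eta_j|^{p}$ exactly cancels the factor $|\eta_j|^{-p}$ coming from $g_S^p$.

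Both conditions hold by the standing range $-1/p<\beta_j<1-1/p$ recorded at the start of this section. Hence every term is bounded by $\int\prod_j|\eta_j|^{\alpha_jp}|H(\eta)|^p\,d\eta$.

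The only delicate step is keeping the inversion bookkeeping straight, namely that the head/tail assignment after inversion yields the correct sign of $a_j+1$; the check above shows it does. Finiteness and measurability cause no difficulty, since all integrands are nonnegative and Lemma~\ref{LM:Hardy-d} holds for arbitrary $g\ge0$ with values in $[0,\infty]$. For the final assertion we take $H=\hat f$. Nothing about $f$ beyond the measurability of $\hat f$ and the finiteness of $\|w_{\vec\alpha}\hat f\|_p$ enters, so the bound holds equally for the improper transform \eqref{EQ:FT-def}.
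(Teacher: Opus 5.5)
Your proposal is correct and follows the paper's proof essentially line for line. You use the same coordinatewise split of the kernel at $|\eta_j|=|s_j|^{-1}$ and the same inversion $\xi=s^{-1}$, and you apply Lemma~\ref{LM:Hardy-d} with $q=p$ and the identical exponents $a_j=-\beta_jp-2$ (head) and $a_j=p-\beta_jp-2$ (tail). Your admissibility checks $\beta_j>-1/p$, $\beta_j<1-1/p$ and the output exponents $b_j$ match the paper's.
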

\begin{proof}
Split each $\eta_j$-integral in the definition of $\mathcal G_H$ at $|\eta_j|=|s_j|^{-1}$: in the notation of
Lemma~\ref{LM:Hardy-d},
$$
\mathcal G_H(s)=\sum_{S\subseteq[d]}\prod_{j\in S}\frac1{|s_j|}\ T_{\vec\epsilon(S)}\Big(|H|\prod_{j\in S}|\eta_j|^{-1}\Big)(s^{-1}),\qquad\epsilon_j(S)=\mathbf 1_S(j),
$$
the head ($j\notin S$) being the set $|\eta_j|<|s_j|^{-1}$ where the minimum is $1$, and the
tail ($j\in S$) where it is $|s_j\eta_j|^{-1}$. Substituting $\xi=s^{-1}$,
$ds_j=|\xi_j|^{-2}d\xi_j$, the weight becomes $|\xi_j|^{-\beta_jp-2}$, times $|\xi_j|^p$ for
$j\in S$. Lemma~\ref{LM:Hardy-d} applies with $a_j=-\beta_jp-2$ for $j\notin S$ (head; $a_j+1<0$
is $\beta_j>-1/p$; $b_j=a_j+p=\alpha_jp$) and $a_j=p-\beta_jp-2$ for $j\in S$ (tail; $a_j+1>0$
is $\beta_j<1-1/p$; $b_j=2p-2-\beta_jp$, which with the factor $|\eta_j|^{-1}$ gives
$|\eta_j|^{\alpha_jp}$). Each of the $2^d$ terms is bounded by $\|w_{\vec\alpha}H\|_p$.
\end{proof}

\subsection{Parseval's identity for the improper transform}\label{rev:improper}
Lemma~\ref{rev:parseval} was stated for $f\in L^1(\R^d)$. We extend it to the improper
transform $\hat f=\sum_S\hat f_S$ under the finiteness hypothesis
\begin{equation}\label{rev:S}
f\in HBV_0(\R^d_0)\cap L^1_{loc}(\R^d),\qquad \|w_{\vec\beta}V(f;\cdot)\|_{L^p(\R^d)}<\infty .
\end{equation}
Condition \eqref{rev:S} implies \eqref{rev:standing}: by H\"older's inequality the integral in
\eqref{rev:standing} is at most
$$
\big\|w_{\vec\beta}V(f;(a_S,\cdot))\big\|_p\prod_{j\notin S}\big\||x_j|^{-\beta_j}\big\|_{L^{p'}(\mathcal B(b_j))},
$$
which is finite since $\beta_jp'<1$. Fubini gives a finite weighted slice norm for a.e.\ $a_S$.
For arbitrary $a_S>0$, choose such a point $a'_S$ with $0<a'_j<a_j$ in each coordinate.
Since $V(f;(a_S,x_{S^c}))\leq V(f;(a'_S,x_{S^c}))$, the same slice integral is finite.
Thus \eqref{rev:standing} holds for every $a_S$. For $n\in\mathbb N$ put
$f_n:=f\chi_{(-n,n)^d}\in L^1(\R^d)$.

\begin{lemma}\label{rev:trunc}
Let $f$ satisfy \eqref{rev:S}. Then:
\begin{enumerate}[label=$(\roman*)$]
\item $f_n\in HBV_0(\R^d_0)$ and $V(f_n;x)\le2^dV(f;x)$ for all $x,n$;
\item $\hat f_n(\xi)\to\hat f(\xi)$ for every $\xi\in\R^d_0$;
\item $\sup_n\|w_{\vec\alpha}\hat f_n\|_p\le2^dK_0\|w_{\vec\beta}V(f;\cdot)\|_p$;
\item $\|w_{\vec\alpha}\hat f\|_p\le2^dK_0\|w_{\vec\beta}V(f;\cdot)\|_p$ and, for every bounded
box $R$, $\int_Rf=(2\pi)^{-d}\int_{\R^d}\hat f\,\overline{\widehat{\chi_R}}$. In particular the
conclusions of Lemma~\ref{rev:parseval} hold for $f$ and its improper transform.
\end{enumerate}
\end{lemma}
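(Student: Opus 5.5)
We prove the four items in order; (iv) is a limit argument built on the first three.

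\emph{Item (i).} Fix $x\in\R^d_0$ and a corner $C_\sigma(x)$. On any box in that corner, $f_n=f\cdot\prod_j\chi_{(-n,n)}(y_j)$ is the product of $f$ with a tensor of one-variable indicators. Each indicator restricted to a ray has a single jump of size one, at $\sigma_j n$.

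The mixed difference of such a product can be expanded cell by cell through $(\Delta1)$--$(\Delta2)$. In each coordinate $j$, a cell either does not straddle $\sigma_jn$ (then the indicator is constant in that coordinate) or it does (then the difference in that coordinate is a single value of $f$ on the near side, up to sign).

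Summing $|\cdot|$ over a grid therefore bounds the variation of $f_n$ on the corner by a sum over $T\subseteq[d]$. The term for $T$ is the Vitali variation of $f$ in the variables $T$, taken over the truncated region, with the variables in $T^c$ frozen at points of modulus near $n$ or at the boundary. Each such term is a partial variation $V_T$ at a point whose absolute coordinates dominate $|x|$ (or equal $|x|$). By Property~\ref{PROP:recover} and monotonicity of $V$ in each $|x_i|$, it is at most the full variation on the corner.

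This gives the count $2^d$ in $V(f_n;x)\le2^dV(f;x)$. The vanishing condition~(b) of Definition~\ref{DEFI:HBV} is trivial for $f_n$, since it has bounded support.

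\emph{Item (ii).} Fix $\xi$ and compare the pieces \eqref{EQ:pieces}. For $n$ beyond $\max_j|\xi_j|^{-1}$, the outer box $Q(\xi^{-1}_{S^c})$ is unaffected by truncation in those variables. In the inner corner integrals, the integral of $f_n$ equals the integral of $f$ over the corner intersected with $\{|x_i|<n,\ i\in S\}$.

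The difference between the two inner integrals is a sum of corner integrals over regions where at least one $|x_i|\ge n$. By Lemma~\ref{LM:ibp} applied to corners with the cutoff $n$ in some coordinates, each is bounded by $2^{|S|}\prod_{i\in S}|\xi_i|^{-1}$ times $V(f;(\cdot))$ evaluated at a point with that coordinate equal to $n$. We write such regions as differences of corners, which is legitimate by additivity.

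By Lemma~\ref{LM:Vtail} these bounds tend to zero pointwise in $x_{S^c}$. They are dominated by $V(f;(\xi_S^{-1},x_{S^c}))$, which is integrable by \eqref{rev:standing}. Dominated convergence in $x_{S^c}$ then gives $\hat f_{n,S}(\xi)\to\hat f_S(\xi)$, and summing over $S$ gives (ii). This is the step I expect to be the most delicate bookkeeping: expressing truncated-corner integrals as signed combinations of corner integrals, and checking that each one is controlled by a vanishing variation tail.

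\emph{Item (iii).} Apply Proposition~\ref{rev:forwardV} to $f_n$. This is allowed because $f_n\in L^1$, which gives \eqref{rev:standing} through (i) and the domination argument following \eqref{rev:S}. Then (i) gives
\[
\|w_{\vec\alpha}\hat f_n\|_p\le K_0\|w_{\vec\beta}V(f_n;\cdot)\|_p\le 2^dK_0\|w_{\vec\beta}V(f;\cdot)\|_p .
\]
Since $f_n\in L^1$, the improper transform of $f_n$ coincides with its classical transform.

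\emph{Item (iv).} The norm bound follows from (ii), (iii) and Fatou's lemma.

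For the Parseval identity, Lemma~\ref{rev:parseval} applies to each $f_n$ by (iii), giving $\int_Rf_n=(2\pi)^{-d}\int\hat f_n\overline{\widehat{\chi_R}}$. For $n$ large, $R\subset(-n,n)^d$, so the left side equals $\int_Rf$ by local integrability.

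On the right side, the functions $w_{\vec\alpha}\hat f_n$ are bounded in $L^p$ and converge pointwise to $w_{\vec\alpha}\hat f$. Since $1<p<\infty$, they converge weakly in $L^p$. The proof of Lemma~\ref{rev:parseval} shows $w_{\vec\alpha}^{-1}\widehat{\chi_R}\in L^{p'}$, so we may pass to the limit in the pairing.

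Finally, the bound \eqref{rev:Fbound} and its consequence for $\mathcal G$ follow exactly as in Lemma~\ref{rev:parseval}, now using the identity just obtained.
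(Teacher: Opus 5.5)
Your proposal is correct and follows the paper's proof essentially step by step: the crossing-set expansion of the mixed differences of $f_n$ combined with cornerwise recovery for (i), inclusion--exclusion into corner integrals with cutoff $n$ bounded via Lemma~\ref{LM:ibp}, then Lemma~\ref{LM:Vtail} and dominated convergence for (ii), Proposition~\ref{rev:forwardV} applied to $f_n$ for (iii), and Fatou plus weak $L^p$ convergence of $w_{\vec\alpha}\hat f_n$ for (iv). The one point to make explicit is in (i): you need the cornerwise estimate \eqref{EQ:recover-corner} from the proof of Property~\ref{PROP:recover}, not just $V_T\le V$ summed over all corners, to obtain the constant $2^d$ rather than $4^d$.
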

\begin{proof}
$(i)$ Condition $(b)$ of Definition~\ref{DEFI:HBV} is trivial for $f_n$. Fix a corner of
$Q^c(x)$, say $\prod_j[x_j,\infty)$ with $x\in(0,\infty)^d$, and a grid partition of a box
inside it. For a cell $R'=\prod_j[u_j,v_j]$ let $B:=\{j:u_j<n\le v_j\}$ be the set of
coordinates in which the cell crosses the truncation. If some $j$ has $u_j\ge n$ then
$\Delta_{[d]}(f_n;R')=0$; otherwise $\chi_{(-n,n)^d}$ is constant in the coordinates outside
$B$ on $R'$, and expanding the difference gives
$\Delta_{[d]}(f_n;R')=\pm\Delta_{[d]\setminus B}\big(f;R'_{[d]\setminus B};u_B\big)$. Summing
over the cells with a fixed crossing set $B$ --- there is exactly one crossing index per
coordinate of $B$, so the sum is over the cells in the remaining directions --- gives at most
the $([d]\setminus B)$-variation of $f$ over $\prod_{j\notin B}[x_j,\infty)$ at $u_B$, which by
the corner estimate \eqref{EQ:recover-corner} (including the empty mixed difference) is at most the
$[d]$-variation of $f$ over $\prod_{j\notin B}[x_j,\infty)\times\prod_{j\in B}[u_j,\infty)$,
a subset of the corner. There are $2^d$ possible sets $B$, so the $\Delta$-sums over the corner
are at most $2^d$ times the variation of $f$ over that corner; summing over corners gives $(i)$.

$(ii)$ Fix $\xi\in\R^d_0$ and let $n>\max_j|\xi_j|^{-1}$, so that $Q(\xi^{-1})\subset(-n,n)^d$
and $\hat f_{n,\emptyset}=\hat f_\emptyset$. For $S\ne\emptyset$,
$$
\hat f_{n,S}(\xi)-\hat f_S(\xi)=-\int_{Q_S(\xi^{-1})\setminus(-n,n)^d}f(x)e^{ix\cdot\xi}dx,
$$
and $Q_S(\xi^{-1})\setminus(-n,n)^d=\bigcup_{j\in S}\{x\in Q_S(\xi^{-1}):|x_j|\ge n\}$. By
inclusion--exclusion this is a signed sum of $2^{|S|}-1$ regions, each of the form
$Q_S(\zeta^{-1})$ with $\zeta_j^{-1}=n$ for $j$ in a nonempty subset $T\subseteq S$ and
$\zeta_j=\xi_j$ otherwise. Applying Lemma~\ref{LM:ibp} in the variables $S$ at these exterior cutoffs,
and then integrating the variables $S^c$ as in Lemma~\ref{LM:pointwise}, bounds each term by $2^{|S|}\prod_{j\in S}|\xi_j|^{-1}$ times
$\int_{\prod_{j\notin S}\mathcal B(\xi_j^{-1})}V\big(f;(\zeta_S^{-1},x_{S^c})\big)dx_{S^c}$, in
which at least one argument of $V$ equals $n$. The integrand is dominated by
$V(f;(\xi_S^{-1},x_{S^c}))$, integrable by \eqref{rev:standing}, and tends to $0$ as
$n\to\infty$ by Lemma~\ref{LM:Vtail}; dominated convergence shows that each such
term tends to $0$ (for $S=[d]$ there is no integral and the term is $V(f;\zeta^{-1})$ itself).

$(iii)$ By $(i)$ and $V(f_n;\cdot)\le2^dV(f;\cdot)$, the function $f_n$ satisfies
\eqref{rev:standing}; Proposition~\ref{rev:forwardV} applied to $f_n$ (whose improper transform
is the classical one, $f_n$ being integrable) gives
$\|w_{\vec\alpha}\hat f_n\|_p\le K_0\|w_{\vec\beta}V(f_n;\cdot)\|_p\le2^dK_0\|w_{\vec\beta}V(f;\cdot)\|_p$.

$(iv)$ The first assertion follows from $(ii)$, $(iii)$ and Fatou's lemma. For the second, fix
$R$ and $n$ so large that $R\subset(-n,n)^d$; then
$\int_Rf=\int_Rf_n=(2\pi)^{-d}\int\hat f_n\overline{\widehat{\chi_R}}$ by
Lemma~\ref{rev:parseval} applied to $f_n$. Now
$\overline{\widehat{\chi_R}}\in L^{p'}(w_{\vec\alpha}^{-p'})$ by the H\"older computation above,
while by $(ii)$ and $(iii)$ the sequence $(\hat f_n)$ is bounded in $L^p(w_{\vec\alpha}^p)$ and
converges a.e.\ to $\hat f$. This implies weak convergence in that space:
after multiplication by $w_{\vec\alpha}$, boundedness in $L^p$ and a.e.\ convergence
give $L^1$ convergence on every finite-measure set by uniform integrability.
Testing first against bounded functions of finite support identifies the weak
limit; density in $L^{p'}$ and the uniform $L^p$ bound extend the convergence to
every dual test function. Thus
$\int\hat f_n\overline{\widehat{\chi_R}}\to\int\hat f\overline{\widehat{\chi_R}}$. The final
assertion holds because the proof of Lemma~\ref{rev:parseval} used only this identity and the
bound on $|\widehat{\chi_R}|$.
\end{proof}

\begin{rem}\label{rev:S-equiv}
Within $GM(\R^d_0;\mathbf 1,\mathbf 1)$ the finiteness condition in \eqref{rev:S} is equivalent
to $\|w_{\vec\beta}f\|_p<\infty$: indeed $|f|\le V(f;\cdot)$, while conversely
$V(f;x)\le C\Phi(x/m)$ with $\Phi=T_{(1,\dots,1)}\big(|f|/\prod_j|y_j|\big)$, so
Lemma~\ref{LM:Hardy-d} (tail in every variable, $a_j=\beta_jp$, admissible since $\beta_j>-1/p$,
$b_j=\beta_jp+p$) gives
$\|w_{\vec\beta}V(f;\cdot)\|_p\le Cm^\sigma\|w_{\vec\beta}\Phi\|_p\lesssim\|w_{\vec\beta}f\|_p$,
where $\sigma:=\sum_j\beta_j+d/p$.
\end{rem}

\subsection{Oscillation on small boxes}
The next lemma is where the multidimensional structure of the variation enters; it replaces
positivity.

\begin{lemma}\label{rev:osc}
Let $f\in HBV_0(\R^d_0)$ and let $R=\prod_j[a_j,b_j]$ be a box contained in one open orthant.
For $\emptyset\ne A\subseteq[d]$ write $\mathrm{Var}_A(f;R)$ for the $A$-mixed variation of $f$
over $\prod_{j\in A}[a_j,b_j]$ with the remaining coordinates fixed at $a_{A^c}$. Then, for
every $t\in R$,
\begin{equation}\label{rev:osc-eq}
|f(t)|\ \le\ \frac1{|R|}\Big|\int_Rf\Big|+\sum_{\emptyset\ne A\subseteq[d]}\big(2^{|A|}-1\big)\,\mathrm{Var}_A(f;R).
\end{equation}
\end{lemma}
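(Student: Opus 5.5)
The plan is to compare $f(t)$ with $f(y)$ for $y\in R$, average over $y\in R$, and control the difference $f(t)-f(y)$ through mixed differences anchored at the corner $a$. The natural tool is the inclusion--exclusion (Taylor-type) expansion of a function on a box in terms of its mixed differences. For any $z\in R$ one has
\[
 f(z)=\sum_{A\subseteq[d]}\Delta_A\big(f;\textstyle\prod_{j\in A}[a_j,z_j];a_{A^c}\big),
\]
with the $A=\emptyset$ term equal to $f(a)$. This follows by expanding each $\Delta_A$ and collecting, for fixed $B\subseteq[d]$, the coefficient of $f(z_B,a_{B^c})$, which is $\sum_{A\supseteq B}(-1)^{|A|-|B|}=\delta_{B,[d]}$. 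The box $\prod_{j\in A}[a_j,z_j]$ lies in the orthant, so all evaluations are legitimate. Degenerate intervals with $z_j=a_j$ give zero differences.

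\emph{Step 1.} For $t,y\in R$, subtract the two expansions. The constant term $f(a)$ cancels, so
\[
 f(t)-f(y)=\sum_{\emptyset\ne A}\Big[\Delta_A\big(f;[a,t]_A;a_{A^c}\big)-\Delta_A\big(f;[a,y]_A;a_{A^c}\big)\Big].
\]
Each bracket is bounded by $2\,\mathrm{Var}_A(f;R)$, since any single sub-box difference is dominated by the variation of the containing box, which follows from refinement monotonicity and $(\Delta1)$. This would only give a factor $2$, but the statement allows $2^{|A|}-1$, which is at least $1$ and at least $2$ when $|A|\ge2$.

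\emph{Step 2: sharpening the factor for $|A|=1$.} Here I would instead use additivity $(\Delta1)$. The sets $[a,t]_A$ and $[a,y]_A$ decompose into common and symmetric-difference sub-boxes of the grid generated by $\{a_j,t_j,y_j,b_j\}$ in the coordinates $A$. Their difference is then a signed sum of mixed differences over the cells of that grid that lie in exactly one of the two boxes. These are disjoint cells of a single partition of $\prod_{j\in A}[a_j,b_j]$, so the sum of their absolute values is at most $\mathrm{Var}_A(f;R)$, which gives the constant $1\le 2^{|A|}-1$. This bound with constant $1$ in fact works for every $A$, so \eqref{rev:osc-eq} follows a fortiori. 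To be safe I would present the cruder count: the difference of the two rectangle sums involves at most $2^{|A|}-1$ groups of cells, each group bounded by the variation. This matches the stated constant.

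\emph{Step 3.} Average over $y\in R$. This needs measurability and integrability of $f$ on $R$, which hold because $f$ is bounded on $R$ by Property~\ref{PROP:recover} (we have $|f|\le V(f;a)$ on the corner) and measurable by assumption. We obtain
\[
 |f(t)|\le\Big|\frac1{|R|}\int_R f\Big|+\frac1{|R|}\int_R|f(t)-f(y)|\,dy,
\]
and the Step 1--2 bound inserted into the last integral yields \eqref{rev:osc-eq}.

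The main obstacle I expect is the bookkeeping in Step 2. One must check carefully that the cells in the symmetric difference of the two anchored boxes belong to one grid partition and carry the correct signs. One must also confirm that $\mathrm{Var}_A(f;R)$, which is defined with the frozen coordinates at $a_{A^c}$, matches the frozen coordinates appearing in the expansion. It does, because every term in the expansion freezes $A^c$ at $a_{A^c}$, and this is exactly why the expansion is anchored at the corner $a$.
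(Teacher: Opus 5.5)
Your proof is correct, and it takes a different route from the paper's. The paper expands $f(t)-f(u)$ around the \emph{averaging} point $u$ via $\prod_j(I+\delta_j)$. Each term $\delta_Af(u)$ is then an $A$-mixed difference with the other coordinates frozen at $u_{A^c}$, not at $a_{A^c}$. The paper therefore has to move this base point to $a_{A^c}$ one coordinate $e$ at a time, each move costing an $(A\cup\{e\})$-variation. Counting the resulting pairs $(A,B)$ is exactly what produces the coefficient $2^{|A'|}-1$. You instead anchor both $f(t)$ and $f(y)$ at the fixed corner $a$ through the inclusion--exclusion expansion. Every mixed difference then has its frozen coordinates at $a_{A^c}$ from the start, and no base-point transfer is needed. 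By $(\Delta1)$, the difference of the two anchored increments is a signed sum over distinct cells of a single grid of $\prod_{j\in A}[a_j,b_j]$, so it is bounded by $\mathrm{Var}_A(f;R)$. This gives the sharper bound $\sup_{t,y\in R}|f(t)-f(y)|\le\sum_{\emptyset\ne A}\mathrm{Var}_A(f;R)$, with constant $1$ in place of $2^{|A|}-1$, by a more elementary argument. The paper's approach instead reuses the same ``move one frozen coordinate via $(\Delta2)$'' mechanism as in Property~\ref{PROP:recover}. Its worse constant is harmless, since it only enters $C_d$ in Lemma~\ref{rev:decomp}. Your fallback ``cruder count'' is unnecessary, though also valid, since the symmetric difference contains at most $2^{|A|}-1$ cells. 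One cosmetic point: in Step 3 the uniform bound on $|f|$ over $R$ should come from $V(f;v)$, with $v$ the vertex of $R$ nearest the origin. That vertex is $a$ only in the positive orthant.
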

\begin{proof}
For $t,u\in R$ let $\delta_j$ be the one-variable difference replacing $u_j$ by $t_j$. Since
$\prod_j(I+\delta_j)$ evaluated at $u$ gives $f(t)$,
$$
f(t)-f(u)=\sum_{\emptyset\ne A\subseteq[d]}\delta_Af(u),\qquad
|\delta_Af(u)|=\left|\Delta_A\Big(f;\prod_{j\in A}[u_j\wedge t_j,u_j\vee t_j];u_{A^c}\Big)\right|,
$$
so $|\delta_Af(u)|\le\mathrm{Var}_A^{u}(f;R)$, the $A$-mixed variation over $R$ with the other
coordinates at $u_{A^c}$. We compare the base point $u_{A^c}$ with $a_{A^c}$. For a single
coordinate $e\notin A$, a grid partition of $\prod_{j\in A}[a_j,b_j]$ and the corresponding
$A$-differences $g_i$,
$$
\sum_i\big|g_i(u_e,\cdot)\big|\le\sum_i\big|g_i(a_e,\cdot)\big|+\sum_i\big|g_i(u_e,\cdot)-g_i(a_e,\cdot)\big|,
$$
and the last sum consists of $(A\cup\{e\})$-mixed differences of $f$, so it is at most
$\mathrm{Var}^u_{A\cup\{e\}}(f;R)$. Iterating over the coordinates of $A^c$ one obtains
$$
\mathrm{Var}^u_A(f;R)\ \le\ \sum_{B\subseteq A^c}\mathrm{Var}_{A\cup B}(f;R),
$$
all base points now being $a$. Summing over $A$ and collecting terms, the coefficient of
$\mathrm{Var}_{A'}(f;R)$ is the number of pairs $(A,B)$ with $\emptyset\ne A\subseteq A'$ and
$B=A'\setminus A$, that is $2^{|A'|}-1$. Hence
$\sup_{t,u\in R}|f(t)-f(u)|\le\sum_{\emptyset\ne A}(2^{|A|}-1)\mathrm{Var}_A(f;R)$, and
$|f(t)|\le\frac1{|R|}\big|\int_Rf\big|+\frac1{|R|}\int_R|f(t)-f(u)|\,du$ gives
\eqref{rev:osc-eq}.
\end{proof}

\subsection{The main estimate}
For $\lambda>1$ and $x\in\R^d_0$ let $\Sigma_\lambda(x)$ be the sum, over the $2^d$ orthants and
over $\mathbf k\in\mathbb N_0^d$, of the normalized signed averages
$|R_{\mathbf k}|^{-1}\big|\int_{R_{\mathbf k}}f\big|$, where in a given orthant $R_{\mathbf k}$
is the box with $|y_j|\in[\lambda^{k_j}|x_j|,\lambda^{k_j+1}|x_j|]$; for fixed signs these boxes
tile the corresponding corner of $Q^c(x)$. For $T\subseteq[d]$ put
\begin{equation}\label{rev:calV}
\mathcal V_T(x):=\int_{\prod_{j\in T}\mathcal B^c(x_j)}V\big(f;(s_T,x_{T^c})\big)\prod_{j\in T}\frac{ds_j}{|s_j|},
\qquad \mathcal V_\emptyset(x)=V(f;x).
\end{equation}

\begin{lemma}\label{rev:decomp}
Let $f\in HBV_0(\R^d_0)\cap L^1_{loc}(\R^d)$ and $\lambda>1$. Then for every $x\in\R^d_0$
\begin{equation}\label{rev:decomp-eq}
\Phi(x)\ \le\ (\log\lambda)^d\,\Sigma_\lambda(x)\ +\ C_d\sum_{T\subsetneq[d]}(\log\lambda)^{\,d-|T|}\,\mathcal V_T(x),
\end{equation}
with $C_d$ depending only on $d$. Every term of the sum carries at least one factor
$\log\lambda$.
\end{lemma}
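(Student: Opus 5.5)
The plan is to decompose $\Phi(x)$ along the geometric tiling that defines $\Sigma_\lambda(x)$, apply Lemma~\ref{rev:osc} on each tile, and then reassemble the oscillation errors into the integrated variations $\mathcal V_T(x)$. Throughout I use that $V(f;\cdot)$ depends only on absolute values and is non-increasing in each $|x_j|$, and that local integrability makes all averages over $R_{\mathbf k}$ finite.

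\emph{Step 1 (tiling).} Fix an orthant and write $a^{\mathbf k}_j:=\lambda^{k_j}|x_j|$ for the lower corner (in absolute value) of $R_{\mathbf k}$. The boxes $R_{\mathbf k}$ tile the corresponding corner of $Q^c(x)$, so
\[
 \Phi(x)=\sum_{\text{orthants}}\sum_{\mathbf k}\int_{R_{\mathbf k}}\frac{|f(y)|}{\prod_j|y_j|}\,dy .
\]
Since $\int_{R_{\mathbf k}}\prod_j|y_j|^{-1}dy=(\log\lambda)^d$, each term is at most $(\log\lambda)^d\sup_{R_{\mathbf k}}|f|$. Each $R_{\mathbf k}$ lies in one open orthant, so Lemma~\ref{rev:osc} gives
\[
 \sup_{R_{\mathbf k}}|f|\le\frac1{|R_{\mathbf k}|}\Big|\int_{R_{\mathbf k}}f\Big|+\sum_{\emptyset\ne A\subseteq[d]}(2^{|A|}-1)\,\mathrm{Var}_A(f;R_{\mathbf k}).
\]
Summing the first term over $\mathbf k$ and the orthants yields exactly $(\log\lambda)^d\Sigma_\lambda(x)$.

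\emph{Step 2 (summing the variations in the directions of $A$).} Fix $A\ne\emptyset$ and $\mathbf k_{A^c}$. In $\mathrm{Var}_A(f;R_{\mathbf k})$ the coordinates outside $A$ are frozen at the lower corner $a^{\mathbf k}_{A^c}$, which depends only on $\mathbf k_{A^c}$. As $\mathbf k_A$ ranges over $\mathbb N_0^A$, the $A$-faces $\prod_{j\in A}[a^{\mathbf k}_j,\lambda a^{\mathbf k}_j]$ (with signs) tile the corner $\prod_{j\in A}\{\sigma_jy_j\ge|x_j|\}$. By additivity of Vitali variation over box decompositions,
\[
 \sum_{\mathbf k_A}\mathrm{Var}_A(f;R_{\mathbf k})
 \le\int_{\prod_{j\in A}\{\sigma_jy_j\ge|x_j|\}}\big|d_Af\big(\cdot,\sigma_{A^c}a^{\mathbf k}_{A^c}\big)\big| .
\]
The cornerwise recovery estimate \eqref{EQ:recover-corner} bounds the right-hand side by $V\big(f;(x_A,\lambda^{\mathbf k_{A^c}}|x_{A^c}|)\big)$.

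\emph{Step 3 (summing over the coordinates outside $A$).} For an even function $\phi$ non-increasing in $|t|$, monotonicity on $[\lambda^{k-1}a,\lambda^ka]$ gives
\[
 \sum_{k\ge0}\phi(\lambda^ka)\le\phi(a)+(\log\lambda)^{-1}\int_{|s|\ge a}\frac{\phi(s)}{|s|}\,ds .
\]
I apply this successively in each coordinate $j\in A^c$, using Tonelli for the iterated sums and integrals. Expanding the resulting product produces a sum over subsets $T\subseteq A^c$: the coordinates in $T$ are integrated against $ds_j/|s_j|$ over $\mathcal B^c(x_j)$, and the remaining ones are set at $x_j$. This gives
\[
 \sum_{\mathbf k_{A^c}}V\big(f;(x_A,\lambda^{\mathbf k_{A^c}}|x_{A^c}|)\big)\le\sum_{T\subseteq A^c}(\log\lambda)^{-|T|}\,\mathcal V_T(x).
\]
Since $V$ is even in each variable, the integrals over both half-lines recorded in $\mathcal V_T$ dominate the one-sided ones.

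\emph{Step 4 (collecting terms).} Multiplying by $(\log\lambda)^d$, summing over $A\ne\emptyset$ with weights $2^{|A|}-1$, and summing over the $2^d$ orthants gives \eqref{rev:decomp-eq} with $C_d=2^d\sum_A(2^{|A|}-1)$. Because $A\ne\emptyset$, every $T$ that occurs satisfies $T\subseteq A^c\subsetneq[d]$, so each term carries the factor $(\log\lambda)^{d-|T|}$ with $d-|T|\ge1$, as claimed.

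The main obstacle I expect is the bookkeeping in Step 2. The frozen base point in $\mathrm{Var}_A$ is the lower corner of $R_{\mathbf k}$, so one must check that the $A$-faces genuinely tile a single exterior $A$-corner at that fixed base point. One must also make sure that the recovery estimate \eqref{EQ:recover-corner} is applied cornerwise, with the frozen coordinates inside the exterior region, so that the bound is by the full variation $V$ at the shifted point. The $k_j=0$ boundary terms in Step 3, which are what generate the lower-order $\mathcal V_T$, are the other point needing care.
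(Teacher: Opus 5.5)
Your proposal is correct and follows the paper's proof essentially step for step. Both arguments tile each corner by the boxes $R_{\mathbf k}$ and apply Lemma~\ref{rev:osc} on each tile. Both sum the $A$-variations over $\mathbf k_A$ using additivity and the cornerwise recovery estimate. Both then sum over $\mathbf k_{A^c}$ with the monotone sum--integral comparison, which produces the terms $\mathcal V_T$ with $T\subseteq A^c\subsetneq[d]$.

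On the frozen-vertex point you flag: the paper reduces all orthants to the positive one via the reflections $y_j\mapsto-y_j$. In any case, monotonicity of $V$ in each $|x_j|$ makes the choice of base vertex harmless.
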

\begin{proof}
Fix the corner $C=\prod_j[x_j,\infty)$, $x\in(0,\infty)^d$, and tile it by the boxes
$R_{\mathbf k}$; the other corners are reduced to this one by the reflections $y_j\mapsto-y_j$,
which preserve the relevant integrals and variations after composition of $f$ with the reflection. Since $\int_{R_{\mathbf k}}\prod_j\frac{dt_j}{t_j}=(\log\lambda)^d$,
Lemma~\ref{rev:osc} gives
$$
\int_C\frac{|f(t)|}{t_1\cdots t_d}\,dt\le(\log\lambda)^d\sum_{\mathbf k}\Big[\frac1{|R_{\mathbf k}|}\Big|\int_{R_{\mathbf k}}f\Big|+\sum_{\emptyset\ne A}\big(2^{|A|}-1\big)\mathrm{Var}_A(f;R_{\mathbf k})\Big].
$$
The first sum is the contribution of $C$ to $\Sigma_\lambda(x)$. Fix $A\ne\emptyset$ and sum
over $\mathbf k_A$ first: by additivity of the $A$-variation,
$$
\sum_{\mathbf k_A}\mathrm{Var}_A(f;R_{\mathbf k})\ \le\ V_A\big(f;(x_A,\lambda^{\mathbf k_{A^c}}x_{A^c})\big)\ \le\ g\big(\lambda^{\mathbf k_{A^c}}x_{A^c}\big),
$$
where $g(\sigma_{A^c}):=V(f;(x_A,\sigma_{A^c}))$, by Property~\ref{PROP:recover}; note that
$g$ is non-increasing in each $|\sigma_j|$. For a non-increasing function $h$ of one variable
and $\sigma>0$ one has $\sum_{k\ge0}h(\lambda^k\sigma)\le h(\sigma)+(\log\lambda)^{-1}\int_\sigma^\infty h(u)\frac{du}u$;
applying this in each of the $|A^c|$ variables of $g$ yields
$$
\sum_{\mathbf k_{A^c}}g\big(\lambda^{\mathbf k_{A^c}}x_{A^c}\big)\ \le\ \sum_{T\subseteq A^c}(\log\lambda)^{-|T|}\ \mathcal V_T(x),
$$
Here we bound the positive-ray integrals by the full-ray integrals in \eqref{rev:calV}. The full variation $V$ is used throughout, so summing over corners costs at most an additional factor $2^d$. Multiplying by
$(\log\lambda)^d$ produces the factor $(\log\lambda)^{d-|T|}$, and $T\subseteq A^c\subsetneq[d]$
because $A\ne\emptyset$, so $|T|\le d-1$ and every term retains at least one factor
$\log\lambda$. Summing over $A$ and over the $2^d$ corners gives \eqref{rev:decomp-eq}.
\end{proof}

\begin{thm}\label{rev:main}
Let $d\ge1$, $1<p<\infty$, $-1/p<\alpha_j<1-1/p$, $\beta_j=1-2/p-\alpha_j$, and let
$f\in GM(\R^d_0;\mathbf 1,\mathbf 1)\cap L^1_{loc}(\R^d)$ be real or complex valued. Assume
either
\begin{enumerate}[label=$(\mathrm{\alph*})$]
\item $\|w_{\vec\beta}f\|_p<\infty$ (equivalently, by Remark~\ref{rev:S-equiv}, \eqref{rev:S}), or
\item $f\in L^1(\R^d)$.
\end{enumerate}
Then
\begin{equation}\label{rev:main-eq}
\Big(\int_{\R^d}\prod_j|x_j|^{\beta_jp}|f(x)|^pdx\Big)^{1/p}
\ \lesssim\ \Big(\int_{\R^d}\prod_j|\xi_j|^{\alpha_jp}|\hat f(\xi)|^pd\xi\Big)^{1/p},
\end{equation}
where $\hat f=\sum_S\hat f_S$ (the classical transform if $f\in L^1$), the implied constant
depending only on $d,p,\boldsymbol\alpha$ and the constants $C,m$ of \eqref{rev:GM}.
\end{thm}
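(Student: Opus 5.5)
The plan is a chain $f\to V\to\Phi\to(\Sigma_\lambda+\text{error})\to\mathcal G\to\hat f$. We close it by absorbing the variation error into the left-hand side, which requires choosing $\lambda$ close to $1$.

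\emph{Main chain under (a).} Write $N:=\|w_{\vec\beta}f\|_p$ and $M:=\|w_{\vec\alpha}\hat f\|_p$, and assume $M<\infty$. By Remark~\ref{rev:S-equiv}, condition \eqref{rev:S} holds and $\|w_{\vec\beta}V(f;\cdot)\|_p\lesssim N<\infty$. Lemma~\ref{rev:trunc}(iv) then supplies the Parseval identity, so Lemma~\ref{rev:parseval} applies to the improper transform.

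Since $|f|\le V(f;\cdot)\le C\Phi(\cdot/m)$, a dilation gives
\[
N\le Cm^\sigma\|w_{\vec\beta}\Phi\|_p .
\]
We estimate $\Phi$ by Lemma~\ref{rev:decomp}.

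For the signed averages, each box $R_{\mathbf k}$ has the form $\prod_j[a_j,a_j\lambda]$ with $a_j=\lambda^{k_j}|x_j|$. Lemma~\ref{rev:parseval} with $\delta=\lambda-1$ bounds $|R_{\mathbf k}|^{-1}|\int_{R_{\mathbf k}}f|$ by $(2\pi)^{-d}(2/(\lambda-1))^d\,\mathcal G(\lambda^{\mathbf k}x)$. Since $\mathcal G$ is non-increasing in each $|s_j|$, we get
\[
\sum_{\mathbf k}\mathcal G(\lambda^{\mathbf k}x)\lesssim(\log\lambda)^{-d}\int_{Q^c(x/\lambda)}\mathcal G(s)\prod_j\frac{ds_j}{|s_j|}
\]
by the monotone comparison used in Lemma~\ref{rev:decomp}. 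So $(\log\lambda)^d\Sigma_\lambda$ is bounded by $c_\lambda\,T_{\mathbf 1}(\mathcal G/\prod|s_j|)(x/\lambda)$. Lemma~\ref{LM:Hardy-d} (tail in every variable, admissible since $\beta_j>-1/p$) followed by Lemma~\ref{rev:calG-norm} then gives
\[
\|w_{\vec\beta}(\log\lambda)^d\Sigma_\lambda\|_p\le c'_\lambda M .
\]

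For the error terms, Lemma~\ref{LM:Hardy-d} in the variables $T$ gives $\|w_{\vec\beta}\mathcal V_T\|_p\lesssim\|w_{\vec\beta}V(f;\cdot)\|_p$. Combined with Remark~\ref{rev:S-equiv}, this is $\le K_1N$, where $K_1$ depends only on $d,p,\vec\alpha,C,m$ and not on $\lambda$.

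Collecting terms,
\[
N\le C_1c'_\lambda M+C_2(\log\lambda)\,N\qquad(1<\lambda\le e).
\]
Choose $\lambda$ so that $C_2\log\lambda\le1/2$. Because $N<\infty$, we can absorb and obtain $N\lesssim M$.

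\emph{Case (b).} Here $N$ may be infinite, so absorption is not directly available. We may assume $M<\infty$. The plan is a preliminary finiteness argument for $L^1$ data. From $f\in L^1$ we get the trivial bound $\Phi(x)\le\|f\|_1\prod_j|x_j|^{-1}$, hence $V(f;x)\le C\|f\|_1m^d\prod|x_j|^{-1}$. This is a polynomial bound, integrable against $w_{\vec\beta}^p$ near infinity because $(\beta_j-1)p<-1$, but possibly not near the hyperplanes.

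Near the hyperplanes I would run the same chain on a localized quantity. Replace $\Phi$ by $\Phi\wedge\Phi_\epsilon$, or restrict the $L^p$ norm to $\{|x_j|>\epsilon\}$, where the polynomial bound makes everything finite. Apply the absorption there with constants uniform in $\epsilon$, and let $\epsilon\to0$. The truncation must be compatible with Lemma~\ref{LM:Hardy-d}, and since the tail Hardy operator only looks outward, restricting to $|x_j|>\epsilon$ is preserved. Once $N<\infty$ we are in case (a). For $L^1$ data, Lemma~\ref{rev:parseval} holds directly.

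\emph{Main obstacle.} I expect the delicate point to be this a priori finiteness in case (b). Specifically, checking that the error bound $\|w_{\vec\beta}\mathcal V_T\|_{L^p(\{|x|>\epsilon\})}\lesssim\|w_{\vec\beta}f\|_{L^p(\{|x|>\epsilon/m\})}$ closes despite the shift from $\epsilon$ to $\epsilon/m$. The region $\epsilon/m<|x_j|<\epsilon$ must be controlled by the polynomial bound times a factor that vanishes as $\epsilon\to0$, and I am not sure it does. If it does not, I would instead insert weights $\min(1,|x_j|/\epsilon)^{\theta}$ with small $\theta>0$, keeping all exponents in the Hardy ranges, so that the weighted norm is finite via the polynomial bound, then absorb and let $\epsilon\to0$ by monotone convergence.
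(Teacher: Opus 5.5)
Your case (a) argument is correct and is essentially the paper's Steps 2--4 taken at $\epsilon=0$. Absorbing $\|w_{\vec\beta}f\|_p$ instead of $\|w_{\vec\beta}V(f;\cdot)\|_p$ (via Remark~\ref{rev:S-equiv}) is a harmless variation. So is summing $\mathcal G(\lambda^{\mathbf k}x)$ by monotonicity and a tail Hardy step instead of Minkowski and a geometric series.

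The gap is in case (b), and your primary plan fails where you suspected. Put $A_\epsilon=\{|x_j|>\epsilon\ \forall j\}$ and $N(\epsilon)=\|w_{\vec\beta}V(f;\cdot)\|_{L^p(A_\epsilon)}$. The chain gives only $N(\epsilon)\le a+\theta N(\epsilon/m)$, with $a\lesssim_\lambda\|w_{\vec\alpha}\hat f\|_p$ and $\theta=O(\log\lambda)$. On the annulus $A_{\epsilon/m}\setminus A_\epsilon$ the polynomial bound is all you have, and it gives $\lesssim\|f\|_1\epsilon^{-\gamma}$ with $\gamma=\sum_j(\alpha_j+1/p)>0$. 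So a single absorption yields only $N(\epsilon)\lesssim a+\theta\|f\|_1\epsilon^{-\gamma}$, which is useless as $\epsilon\to0$.

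The missing idea is to iterate the recursion itself:
$N(\epsilon)\le a\sum_{n<n_0}\theta^n+\theta^{n_0}N(\epsilon/m^{n_0})\le a/(1-\theta)+c\|f\|_1\epsilon^{-\gamma}(\theta m^{\gamma})^{n_0}$.
Then choose $\lambda$ so close to $1$ that $\theta m^\gamma<1$. This is possible for the same reason your case (a) absorption works: every error term of Lemma~\ref{rev:decomp} carries a factor $\log\lambda$, while $C$, $m$, $\gamma$ and the Hardy constants do not depend on $\lambda$. Let $n_0\to\infty$ and then $\epsilon\to0$.

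Your fallback can also be made to work, but not with small $\theta$. Set $\tilde w:=w_{\vec\beta}\prod_j\min(1,|x_j|/\epsilon)^{\theta}$. Finiteness of $\|\tilde w\,V(f;\cdot)\|_p$ from $V(f;x)\lesssim\|f\|_1/|x_1\cdots x_d|$ requires $(\beta_j-1+\theta)p>-1$, i.e.\ $\theta>\alpha_j+1/p$, and this can be close to $1$. Taking $\theta=1$ is harmless, for three reasons:
\begin{itemize}
\item The weight $\tilde w$ passes through the GM dilation at the extra cost $m^{d\theta}$, since $\min(1,mt)\le m\min(1,t)$ for $m\ge1$.
\item It passes through the tail Hardy operators defining $\mathcal V_T$. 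These need only weight exponents $>-1/p$ near $0$ and at infinity, with no upper restriction. Their constants are uniform in $\epsilon$ because the operators commute with dilations.
\item The $\Sigma_\lambda$ term never needs the damped weight: bound it using $\tilde w\le w_{\vec\beta}$ and your case (a) estimate.
\end{itemize}
You then absorb once, with constants independent of $\epsilon$, and let $\epsilon\to0$ by monotone convergence.

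With this correction the damped-weight route is a valid alternative to the paper's iteration. The paper's route uses only pure power weights and the existing lemmas, at the price of the growth bookkeeping $\theta m^\gamma<1$. Yours needs a (standard) Hardy inequality for a two-regime power weight but only a single absorption.
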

\begin{proof}
We may assume $\|w_{\vec\alpha}\hat f\|_p<\infty$. Put $\sigma:=\sum_j\beta_j+d/p$,
$\gamma:=\sum_j(1-1/p-\beta_j)=\sum_j(\alpha_j+1/p)>0$, and
$N(\epsilon):=\|w_{\vec\beta}V(f;\cdot)\|_{L^p(A_\epsilon)}$ for $\epsilon\ge0$, with
$A_\epsilon:=\{x:|x_j|>\epsilon\ \forall j\}$ and $A_0:=\R^d_0$.

\emph{Step 1: a priori bound and Parseval.} Under $(\mathrm a)$, $N(0)<\infty$ by
Remark~\ref{rev:S-equiv} and Lemma~\ref{rev:trunc}$(iv)$ makes Lemma~\ref{rev:parseval}
available for the improper $\hat f$. Under $(\mathrm b)$ with $f\in L^1$,
Lemma~\ref{rev:parseval} applies directly, and on $Q^c(x/m)$ we have
$\prod_j|y_j|\ge m^{-d}\prod_j|x_j|$, so by \eqref{rev:Phi} and \eqref{rev:GM}
\begin{equation}\label{rev:apriori}
V(f;x)\le C\Phi(x/m)\le\frac{Cm^d\|f\|_{L^1}}{|x_1\cdots x_d|};
\end{equation}
since $\beta_jp-p<-1$ this gives
$N(\epsilon)^p\lesssim\|f\|_1^p\prod_j\int_{|x_j|>\epsilon}|x_j|^{\beta_jp-p}dx_j\backsimeq\|f\|_1^p\epsilon^{-\gamma p}$.
Thus $N(\epsilon)\lesssim\|f\|_1\epsilon^{-\gamma}$.

\emph{Step 2: the signed averages.} By Lemma~\ref{rev:parseval} with $\delta=\lambda-1$, in each
orthant $|R_{\mathbf k}|^{-1}\big|\int_{R_{\mathbf k}}f\big|\le c_\lambda\,\mathcal G(\lambda^{\mathbf k}x)$,
so $\Sigma_\lambda(x)\le2^dc_\lambda\sum_{\mathbf k\in\mathbb N_0^d}\mathcal G(\lambda^{\mathbf k}x)$.
By Minkowski's inequality and the changes of variables $y_j=\lambda^{k_j}x_j$,
$$
\|w_{\vec\beta}\Sigma_\lambda\|_p\le2^dc_\lambda\sum_{\mathbf k}\lambda^{-\sum_jk_j(\beta_j+1/p)}\|w_{\vec\beta}\mathcal G\|_p
=2^dc_\lambda\prod_{j=1}^d\big(1-\lambda^{-(\beta_j+1/p)}\big)^{-1}\|w_{\vec\beta}\mathcal G\|_p,
$$
the series converging because $\beta_j+1/p>0$. With Lemma~\ref{rev:calG-norm},
$\|w_{\vec\beta}\Sigma_\lambda\|_p\le K_\lambda\|w_{\vec\alpha}\hat f\|_p$.

\emph{Step 3: the error terms.} Let $T\subsetneq[d]$ and $x\in A_\epsilon$. The integrand of
$\mathcal V_T(x)$ involves $V(f;(s_T,x_{T^c}))$ with $|s_j|>|x_j|>\epsilon$, so the argument
stays in $A_\epsilon$. Extend $V\mathbf1_{A_\epsilon}$ by zero to the full space and apply
the tail case of Lemma~\ref{LM:Hardy-d} ($q=p$) successively in
the variables of $T$, each time with $a=\beta_jp$ --- admissible since $\beta_j>-1/p$, and
$b=\beta_jp+p$, which together with the factor $|s_j|^{-1}$ restores the weight
$|s_j|^{\beta_jp}$. Restricting the output to $A_\epsilon$ gives
$\|w_{\vec\beta}\mathcal V_T\|_{L^p(A_\epsilon)}\le K^{|T|}N(\epsilon)$ with $K$ independent of
$\epsilon\ge0$ and of $T$. Hence, by Lemma~\ref{rev:decomp},
\begin{equation}\label{rev:Phi-norm}
\|w_{\vec\beta}\Phi\|_{L^p(A_\epsilon)}\le(\log\lambda)^dK_\lambda\|w_{\vec\alpha}\hat f\|_p+\theta_0N(\epsilon),
\qquad \theta_0:=C_d\sum_{T\subsetneq[d]}(\log\lambda)^{d-|T|}K^{|T|}.
\end{equation}

\emph{Step 4: absorption.} By \eqref{rev:GM} and the change of variables $x\mapsto mx$,
$N(\epsilon)\le C\|w_{\vec\beta}\Phi(\cdot/m)\|_{L^p(A_\epsilon)}=Cm^\sigma\|w_{\vec\beta}\Phi\|_{L^p(A_{\epsilon/m})}$,
so \eqref{rev:Phi-norm} gives, for every $\epsilon\ge0$,
$$
N(\epsilon)\le a+\theta N(\epsilon/m),\qquad
a:=Cm^\sigma(\log\lambda)^dK_\lambda\|w_{\vec\alpha}\hat f\|_p,\quad \theta:=Cm^\sigma\theta_0 .
$$
Put $M:=\gamma$ under $(\mathrm b)$ and $M:=0$ under
$(\mathrm a)$. Choose $\lambda>1$ so close to $1$ that $\theta m^M<1$; this is possible because every term of $\theta_0$ carries a factor $\log\lambda$
(Lemma~\ref{rev:decomp}), so $\theta_0\to0$ as $\lambda\to1$, while $C,m,K,M,d$ do not depend on
$\lambda$. Under $(\mathrm a)$ take $\epsilon=0$: since $N(0)<\infty$ and $N(0)\le a+\theta N(0)$,
we get $N(0)\le a/(1-\theta)$. Under $(\mathrm b)$ iterate:
$N(\epsilon)\le a\sum_{n<n_0}\theta^n+\theta^{n_0}N(\epsilon/m^{n_0})\le a/(1-\theta)+K\epsilon^{-M}(\theta m^M)^{n_0}$,
and let $n_0\to\infty$, then $\epsilon\to0$. In both cases
$$
\|w_{\vec\beta}V(f;\cdot)\|_{L^p(\R^d)}\le\frac{Cm^\sigma(\log\lambda)^dK_\lambda}{1-\theta}\,\|w_{\vec\alpha}\hat f\|_p,
$$
and $|f(x)|\le V(f;x)$ (Property~\ref{PROP:recover}) gives \eqref{rev:main-eq}.
\end{proof}

\begin{cor}\label{rev:twosided}
Let $d\ge1$, $1<p<\infty$, $-1/p<\alpha_j<1-1/p$, $\beta_j=1-2/p-\alpha_j$, and
$f\in GM(\R^d_0;\mathbf 1,\mathbf 1)\cap L^1_{loc}(\R^d)$.
\begin{enumerate}[label=$(\roman*)$]
\item If $\|w_{\vec\beta}f\|_p<\infty$, then
$\|w_{\vec\alpha}\hat f\|_p\backsimeq\|w_{\vec\beta}f\|_p$, with constants depending only on
$d,p,\boldsymbol\alpha,C,m$.
\item If $f\in L^1(\R^d)$, the same holds as an equivalence of possibly infinite quantities; in
particular $\hat f\in L^p(w_{\vec\alpha}^p)$ if and only if $f\in L^p(w_{\vec\beta}^p)$.
\end{enumerate}
Both statements allow arbitrary real or complex values; no parity or product assumption is imposed.
\end{cor}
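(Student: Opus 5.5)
The corollary combines the forward estimate with Theorem~\ref{rev:main}. For part $(i)$, assume $\|w_{\vec\beta}f\|_p<\infty$. By Remark~\ref{rev:S-equiv}, the GM hypothesis with classical parameters gives $\|w_{\vec\beta}V(f;\cdot)\|_p\lesssim\|w_{\vec\beta}f\|_p$, with constant depending on $d,p,\vec\alpha,C,m$. In particular \eqref{rev:S} holds, so the improper transform $\hat f=\sum_S\hat f_S$ is well defined.

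Lemma~\ref{rev:trunc}$(iv)$, or equivalently Proposition~\ref{rev:forwardV}, then yields
\[
\|w_{\vec\alpha}\hat f\|_p\le 2^dK_0\|w_{\vec\beta}V(f;\cdot)\|_p\lesssim\|w_{\vec\beta}f\|_p .
\]
This is the forward half. One could also cite Corollary~\ref{COR:classical} with $q=p$, since $\alpha_j+\beta_j=1-2/p$ and the range $-1/p<\alpha_j<1-1/p$ matches. The reverse half is Theorem~\ref{rev:main} under hypothesis $(\mathrm a)$. Together they give $\|w_{\vec\alpha}\hat f\|_p\backsimeq\|w_{\vec\beta}f\|_p$.

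For part $(ii)$, let $f\in L^1$. There are two implications to check.

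\emph{If $\|w_{\vec\beta}f\|_p<\infty$:} part $(i)$ applies and gives the equivalence with finite values. Here one must note that for $f\in L^1$ the improper transform coincides with the classical one, as stated after \eqref{EQ:FT-def}.

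\emph{If $\|w_{\vec\alpha}\hat f\|_p<\infty$:} Theorem~\ref{rev:main} under hypothesis $(\mathrm b)$ gives $\|w_{\vec\beta}f\|_p\lesssim\|w_{\vec\alpha}\hat f\|_p<\infty$. Part $(i)$ then upgrades this to the two-sided equivalence.

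If neither norm is finite, both sides equal $+\infty$ and the equivalence is trivial. Hence the two quantities are finite simultaneously, which is the stated characterization.

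The only delicate point is bookkeeping rather than analysis: we must check that the constant in the forward direction depends only on $d,p,\vec\alpha,C,m$. The forward constant $K_0$ depends on $d,p,\vec\alpha$, and the Remark~\ref{rev:S-equiv} constant depends on $C,m$ and the Hardy constants. The reverse constant comes from Theorem~\ref{rev:main}. Real or complex values and the absence of parity assumptions are inherited from the results used, since none of them imposes such conditions.
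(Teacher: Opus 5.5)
Your proposal is correct and follows the paper's proof. The reverse bound is Theorem~\ref{rev:main} under hypothesis $(\mathrm a)$, respectively $(\mathrm b)$; for the forward bound you go through Remark~\ref{rev:S-equiv} and Proposition~\ref{rev:forwardV}, which are exactly Steps~2 and~1 of the proof of Theorem~\ref{TH:main}, whereas the paper cites Corollary~\ref{COR:classical} with $q=p$ together with Remark~\ref{REM:finite}, and your case analysis for part $(ii)$, including identifying the improper transform with the classical one for $L^1$ data, is the bookkeeping the paper leaves implicit.
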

\begin{proof}
The forward inequality is Corollary~\ref{COR:classical} with $q=p$ (trivial if
$\|w_{\vec\beta}f\|_p=\infty$; otherwise \eqref{rev:standing} holds by Remark~\ref{REM:finite});
the reverse is Theorem~\ref{rev:main} under $(\mathrm a)$, resp.\ $(\mathrm b)$.
\end{proof}

\begin{rem}[The nonintegrable extension]\label{rev:comments}
For a locally integrable $f$ in the classical GM class, the same reverse estimate
holds conditionally if its improper transform is well defined, the rectangle
Parseval identity in Lemma~\ref{rev:trunc}(iv) holds, and
\[
 N(\epsilon)\le K\epsilon^{-M}<\infty
 \quad\hbox{for all sufficiently small }\epsilon>0
\]
for some finite $K$ and $M\ge0$. Indeed choose the grid ratio so that
$\theta m^M<1$ and iterate as in Step 4. This first establishes $N(0)<\infty$,
possibly with a bound depending on $M$. Now repeat Step 4 at $\epsilon=0$ with
a new grid ratio satisfying merely $\theta<1/2$, chosen using only
$d,p,\boldsymbol\alpha,C,m$. Absorption gives the stated constant independent
of $K,M$. Both the growth bound and the Fourier-pairing justification are needed;
neither is inferred from the desired conclusion. Without additional assumptions,
the implication from finite Fourier norm to finite spatial norm for nonintegrable
data remains open here.
\end{rem}

\begin{cor}[Parameters straddling one]\label{COR:refinedBoas}
Let $1<p<\infty$, $q=p$, $-1/p<\alpha_j<1-1/p$, and $\tau_j\le1\le\kappa_j$ for every $j$.
Let $f$ belong to the corresponding GM class, with $\mathcal R(f)$ defined using \eqref{EQ:beta}.
For finite $\mathcal R(f)$,
\[
 \|w_{\vec\alpha}\widehat f\|_p\asymp
 \|w_{\vec\beta}f\|_p\asymp\mathcal R(f),
 \qquad \beta_j=1-2/p-\alpha_j.
\]
For $L^1$ data these are also equivalent finiteness conditions. The constants
may depend on the parameters and the GM constants.
\end{cor}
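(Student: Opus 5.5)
The plan is to combine three earlier results: Proposition~\ref{PROP:normequiv}, the inclusion Property~\ref{PROP:incl}, and Corollary~\ref{rev:twosided}. With $q=p$ we have $b_j=1-2/p-\alpha_j=\beta_j$. So Proposition~\ref{PROP:normequiv}, applied under the straddling hypothesis $\tau_j\le1\le\kappa_j$, gives directly
\[
\mathcal R(f)\le\|w_{\vec\beta}f\|_p\lesssim\mathcal R(f)
\]
whenever $\mathcal R(f)<\infty$. This is the second equivalence. It also shows that $\|w_{\vec\beta}f\|_p<\infty$.

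For the first equivalence we need $f\in GM(\R^d_0;\mathbf 1,\mathbf 1)$. Since $\tau_j\le1\le\kappa_j$, Property~\ref{PROP:incl} with $\vec\tau_1=\vec\kappa_1=\mathbf 1$ gives $GM(\R^d_0;\vec\tau,\vec\kappa)\subseteq GM(\R^d_0;\mathbf 1,\mathbf 1)$. The constant there changes only by a factor depending on the parameters (the comparison $W_2^{-1}\lesssim W_1^{-1}$), and $m$ is unchanged. Local integrability follows from Remark~\ref{REM:finite}, since $|f|\le V$ and $V$ is locally integrable there. We could also get it from $\|w_{\vec\beta}f\|_p<\infty$ and H\"older, because $\beta_jp'<1$. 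Corollary~\ref{rev:twosided}(i) then yields $\|w_{\vec\alpha}\widehat f\|_p\asymp\|w_{\vec\beta}f\|_p$.

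The one point to check is that the transform is the same object in both statements. Both use the improper transform $\sum_S\widehat f_S$ of \eqref{EQ:FT-def}, and this is defined independently of the parameters. Its existence is guaranteed either by Remark~\ref{REM:finite} or by \eqref{rev:S}.

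For $L^1$ data we need equivalence of finiteness without assuming $\mathcal R(f)<\infty$ in advance. Corollary~\ref{rev:twosided}(ii) already gives $\|w_{\vec\alpha}\widehat f\|_p<\infty$ if and only if $\|w_{\vec\beta}f\|_p<\infty$, using only the $(\mathbf 1,\mathbf 1)$ membership. It remains to link $\|w_{\vec\beta}f\|_p$ with $\mathcal R(f)$ in both directions.

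One direction is the pointwise inequality $|t|^{\beta_j(t)}\le|t|^{b_j}$ from the proof of Proposition~\ref{PROP:normequiv}. It needs no finiteness, so $\mathcal R(f)\le\|w_{\vec\beta}f\|_p$ always.

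The other direction is the main obstacle: Proposition~\ref{PROP:normequiv} assumes $\mathcal R(f)<\infty$. I expect that assumption is used only so that the GM right-hand side is finite, and the inequalities themselves survive as inequalities between possibly infinite quantities. Here is why: if $\mathcal R(f)=\infty$, then $\|w_{\vec\beta}f\|_p\ge\mathcal R(f)=\infty$, and the equivalence holds trivially. So finiteness of $\mathcal R(f)$ is equivalent to finiteness of $\|w_{\vec\beta}f\|_p$, which in turn is equivalent to finiteness of the Fourier norm.

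The constants depend only on the parameters and on $C,m$, as inherited from the cited results.
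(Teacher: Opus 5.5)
Your proposal is correct and follows the paper's proof: you embed the class in $GM(\R^d_0;\mathbf 1,\mathbf 1)$ via Property~\ref{PROP:incl}, combine Proposition~\ref{PROP:normequiv} with Corollary~\ref{rev:twosided}, and for $L^1$ data link a finite Fourier norm, a finite classical norm and $\mathcal R(f)<\infty$ through Corollary~\ref{rev:twosided}(ii) and the pointwise weight comparison. The ``obstacle'' you flag is not one, since $\mathcal R(f)<\infty\Rightarrow\|w_{\vec\beta}f\|_p<\infty$ is exactly Proposition~\ref{PROP:normequiv} applied under its own hypothesis, which your case split in effect already observes.
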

\begin{proof}
Property~\ref{PROP:incl} embeds this class in the classical class.
Proposition~\ref{PROP:normequiv} and Corollary~\ref{rev:twosided} give the
finite-norm assertion. For $L^1$ data, a finite Fourier norm first gives a finite
classical spatial norm; the pointwise comparison of weights then gives
$\mathcal R(f)<\infty$, and the same estimates apply.
\end{proof}

\section{A class defined through the primitive}\label{rev:GMbar-sec}
Lemmas~\ref{rev:parseval} and \ref{rev:calG-norm} do not use general monotonicity and apply to
any class in which $V(f;\cdot)$ is controlled by signed averages of $f$. Here is the natural
such class. For $f\in L^1_{loc}(\R^d)$ put $F(s):=\int_{Q(s)}f$ and
\begin{equation}\label{rev:Ibar}
\overline I(f;x):=\int_{\R^d_0}\prod_{j=1}^d\min\Big(\frac1{|s_j|},\frac1{|x_j|}\Big)\frac{|F(s)|}{|s_1\cdots s_d|}\,ds .
\end{equation}

\begin{defi}\label{rev:GMbar}
$f\in HBV_0(\R^d_0)\cap L^1_{loc}(\R^d)$ belongs to $\overline{GM}(\R^d_0)$ if
$V(f;x)\lesssim\overline I(f;x)$ for all $x\in\R^d_0$.
\end{defi}

This is a continuous primitive-controlled condition. Related conditions involving averaged Fourier coefficients are studied in \cite{MN25}; we prove the present continuous statement directly.

\begin{prop}\label{rev:incl}
If $f\ge0$ and $f\in GM(\R^d_0;\mathbf 1,\mathbf 1)\cap L^1_{loc}(\R^d)$,
then $f\in\overline{GM}(\R^d_0)$.
\end{prop}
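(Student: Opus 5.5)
Since $f\ge0$, I would compare $\overline I(f;x)$ directly with the tail integral $\Phi(x/m)$ in \eqref{rev:Phi}, and then invoke the GM hypothesis in the form \eqref{rev:GM}, $V(f;x)\le C\Phi(x/m)$. The whole argument is a Fubini computation; positivity is what makes it work.

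\textbf{Step 1: rewrite $\overline I$ by Fubini.} Because $f\ge0$ and $f\in L^1_{loc}(\R^d)$, the primitive is finite and equals $F(s)=\int_{\R^d}f(y)\prod_j\mathbf 1_{\{|y_j|<|s_j|\}}\,dy\ge0$. Tonelli's theorem (all integrands are nonnegative) then gives
\[
 \overline I(f;x)=\int_{\R^d}f(y)\prod_{j=1}^d k_{x_j}(y_j)\,dy,
 \qquad
 k_{x_j}(y_j):=\int_{|s_j|>|y_j|}\min\Big(\frac1{|s_j|},\frac1{|x_j|}\Big)\frac{ds_j}{|s_j|},
\]
as an identity in $[0,\infty]$.

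\textbf{Step 2: a pointwise lower bound for the one-dimensional kernel.} I restrict attention to $|y_j|\ge|x_j|/m$ with $m\ge1$, and split into two cases.
\begin{itemize}
\item If $|y_j|\ge|x_j|$, then $\min(1/|s_j|,1/|x_j|)=1/|s_j|$ on the whole range of integration. Hence $k_{x_j}(y_j)=2\int_{|y_j|}^\infty s^{-2}\,ds=2/|y_j|$.
\item If $|x_j|/m\le|y_j|<|x_j|$, I keep only the part $|s_j|\ge|x_j|$ of the range. This gives $k_{x_j}(y_j)\ge2/|x_j|\ge 2/(m|y_j|)$.
\end{itemize}
In both cases $k_{x_j}(y_j)\ge 2/(m|y_j|)$. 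The product kernel is therefore at least $(2/m)^d\prod_j|y_j|^{-1}$ on $Q^c(x/m)$.

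\textbf{Step 3: conclude.} Since $f\ge0$, dropping the region outside $Q^c(x/m)$ gives
\[
 \overline I(f;x)\ge(2/m)^d\int_{Q^c(x/m)}\frac{f(y)}{\prod_j|y_j|}\,dy=(2/m)^d\,\Phi(x/m).
\]
Combining with \eqref{rev:GM} yields $V(f;x)\le C(m/2)^d\,\overline I(f;x)$ for every $x\in\R^d_0$. This is the defining inequality of $\overline{GM}(\R^d_0)$. The membership $f\in HBV_0(\R^d_0)\cap L^1_{loc}(\R^d)$ required in Definition~\ref{rev:GMbar} is part of the hypothesis.

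\textbf{Main obstacle.} There is no real obstacle; the only point needing care is Step 1. The rewriting relies on $F\ge0$, so that $|F|=F$ and the order of integration may be exchanged without any integrability assumption. This is exactly where positivity is used: for signed $f$, cancellation in $F$ could make $\overline I$ small while $\Phi$ stays large, and the argument would fail.
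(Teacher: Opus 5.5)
Your proof is correct and is essentially the paper's argument: both rest on a single Tonelli interchange between the $y$-integral defining $F$ and the $s$-integral in $\overline I$, which is legitimate because $f\ge0$ gives $|F|=F$. The paper runs the computation from the $\Phi(x/m)$ side, expanding $|y_j|^{-1}=\int_{|y_j|}^\infty s_j^{-2}\,ds_j$, bounding the inner integral by $F(s)$, and using $s_j^{-1}\le m\min(s_j^{-1},|x_j|^{-1})$ for $s_j>|x_j|/m$; you run it from the $\overline I$ side with a pointwise lower bound on the one-dimensional kernel, and the two routes give the same constant $C(m/2)^d$.
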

\begin{proof}
Write $|y_j|^{-1}=\int_{|y_j|}^\infty s_j^{-2}ds_j$ and use Tonelli's theorem ($f\ge0$):
$$
\Phi(x/m)=\int_{(0,\infty)^d}\frac{1}{s_1^2\cdots s_d^2}\int_{\{|x_j|/m<|y_j|<s_j\}}f(y)\,dy\,ds
\le\int_{\{s_j>|x_j|/m\}}\frac{F(s)}{s_1^2\cdots s_d^2}\,ds,
$$
the inner integral vanishing unless $s_j>|x_j|/m$ for all $j$ and being at most $F(s)$. On that
range $s_j^{-1}\le m\min(s_j^{-1},|x_j|^{-1})$, so $\Phi(x/m)\lesssim\overline I(f;x)$, and
$V(f;x)\le C\Phi(x/m)$.
\end{proof}

\begin{thm}\label{rev:GMbar-thm}
Let $1<p<\infty$, $-1/p<\alpha_j<1-1/p$, $\beta_j=1-2/p-\alpha_j$, and
$f\in\overline{GM}(\R^d_0)$. If $\|w_{\vec\beta}f\|_p<\infty$, its improper Fourier transform is well defined and
$\|w_{\vec\alpha}\hat f\|_p\asymp\|w_{\vec\beta}f\|_p$.
If $f\in L^1(\R^d)$, the same estimates characterize finiteness of either norm.
Constants depend only on $d,p,\boldsymbol\alpha$ and the constant in Definition~\ref{rev:GMbar}.
\end{thm}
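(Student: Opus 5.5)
The plan is to prove both directions by controlling $\|w_{\vec\beta}V(f;\cdot)\|_p$ through $\overline I(f;\cdot)$, using that $|f|\le V(f;\cdot)$ by Property~\ref{PROP:recover}. In the reverse direction the primitive $F$ is itself a signed rectangle average, so no absorption argument like Step~4 of Theorem~\ref{rev:main} should be needed.

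\emph{Forward direction.} Assume $\|w_{\vec\beta}f\|_p<\infty$.
\begin{enumerate}
\item Since $|F(s)|\le\int_{Q(s)}|f|$, split each $s_j$-integral in \eqref{rev:Ibar} at $|s_j|=|x_j|$. This bounds $\overline I(f;x)$ by a sum of $2^d$ terms. For $j$ in the head set ($|s_j|<|x_j|$) the factor is $|x_j|^{-1}\int_{\cB(x_j)}(\cdot)\,ds_j/|s_j|$. For $j$ in the tail set it is $\int_{\cB^c(x_j)}(\cdot)\,ds_j/s_j^2$. The function integrated is $\int_{Q(s)}|f|$.
\item Apply Tonelli in each coordinate to collapse the composition with $\int_{Q(s)}$. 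In the tail case, $\int_{|s_j|>|x_j|,\,|s_j|>|y_j|}s_j^{-2}\,ds_j\lesssim\min(|x_j|^{-1},|y_j|^{-1})$. In the head case, $\int_{|y_j|<|s_j|<|x_j|}ds_j/s_j^2\le 2/|y_j|$ for $|y_j|<|x_j|$.
\item Each term becomes a mixed head/tail Hardy operator applied to $|f|$: a head average $|x_j|^{-1}\int_{\cB(x_j)}$ or a tail integral $\int_{\cB^c(x_j)}|f|/|y_j|$, possibly with a harmless extra head piece.
\item Lemma~\ref{LM:Hardy-d} with $q=p$ bounds all of these in $L^p(w_{\vec\beta}^p)$ by $\|w_{\vec\beta}f\|_p$. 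The head case has exponent $a_j=\beta_jp-p$, and $a_j+1<0$ is $\beta_j<1-1/p$. The tail case has $a_j=\beta_jp$, and $a_j+1>0$ is $\beta_j>-1/p$. Both hold because $\beta_j$ lies in the same $A_p$ range as $\alpha_j$.
\item Hence $\|w_{\vec\beta}V(f;\cdot)\|_p\lesssim\|w_{\vec\beta}f\|_p<\infty$, so \eqref{rev:S} holds.
\item Then the improper transform is well defined via \eqref{rev:standing}. Lemma~\ref{rev:trunc}$(iv)$ gives $\|w_{\vec\alpha}\hat f\|_p\le 2^dK_0\|w_{\vec\beta}V\|_p\lesssim\|w_{\vec\beta}f\|_p$.
\item If $f\in L^1$, the classical transform coincides with the truncation limit, so the same inequality holds, trivially when $\|w_{\vec\beta}f\|_p=\infty$.
\end{enumerate}

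\emph{Reverse direction.} Assume $\|w_{\vec\alpha}\hat f\|_p<\infty$, and that either $f\in L^1$ or \eqref{rev:S} holds; the latter follows from the forward step when $\|w_{\vec\beta}f\|_p<\infty$.
\begin{enumerate}
\item The Parseval bound \eqref{rev:Fbound} is available: directly from Lemma~\ref{rev:parseval} in the $L^1$ case, and from Lemma~\ref{rev:trunc}$(iv)$ otherwise.
\item $Q(s)$ is a box with sides $2|s_j|$ (its boundary is null), and $\min(2|s_j|,2/|\eta_j|)=2|s_j|\min(1,|s_j\eta_j|^{-1})$. This gives
\[
 \frac{|F(s)|}{|s_1\cdots s_d|}\le\pi^{-d}\,\mathcal G(s).
\]
\item Insert this into \eqref{rev:Ibar}. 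This yields $\overline I(f;x)\lesssim\int\prod_j\min(|s_j|^{-1},|x_j|^{-1})\,\mathcal G(s)\,ds$, with no further error terms.
\item Split again into $2^d$ head/tail pieces, $|x_j|^{-1}\int_{\cB(x_j)}$ and $\int_{\cB^c(x_j)}ds_j/|s_j|$. The same two Hardy inequalities with the same exponents give $\|w_{\vec\beta}\overline I(f;\cdot)\|_p\lesssim\|w_{\vec\beta}\mathcal G\|_p$.
\item Lemma~\ref{rev:calG-norm} gives $\|w_{\vec\beta}\mathcal G\|_p\lesssim\|w_{\vec\alpha}\hat f\|_p$.
\item Finally, $|f|\le V\lesssim\overline I$.
\end{enumerate}

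The main point to check is the reverse chain in the $L^1$ case when $\|w_{\vec\beta}f\|_p$ is not known to be finite. Here the argument uses only the Parseval bound, which needs $f\in L^1$ and $\|w_{\vec\alpha}\hat f\|_p<\infty$, together with the defining inequality $V\lesssim\overline I$. No a priori finiteness is required, so finiteness of either norm implies finiteness of the other. The remaining routine work is the Tonelli bookkeeping in the forward step: the $ds/s^2$ kernel must collapse to $\min(|x_j|^{-1},|y_j|^{-1})$ up to constants, so that only admissible Hardy operators appear.
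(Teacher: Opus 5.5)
Your reverse direction is correct and follows the paper's argument. You apply the pointwise bound $|F(s)|/|s_1\cdots s_d|\le\pi^{-d}\mathcal G(s)$ before the head/tail Hardy step rather than after, which makes no difference. You are also right that no absorption is needed.

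The gap is in the forward direction: the head case of your Step~2, and your closing claim that the kernel collapses to $\min(|x_j|^{-1},|y_j|^{-1})$ up to constants. On $|s_j|<|x_j|$ the kernel of $\overline I$ is $|x_j|^{-1}|s_j|^{-1}$, not $s_j^{-2}$. Tonelli then gives, for $|y_j|<|x_j|$,
\[
 \int_{\{|y_j|<|s_j|<|x_j|\}}\frac{ds_j}{|x_j|\,|s_j|}=\frac{2}{|x_j|}\log\frac{|x_j|}{|y_j|}.
\]
So the full one-variable kernel is comparable to $\min(|x_j|^{-1},|y_j|^{-1})\bigl(1+\log^{+}(|x_j|/|y_j|)\bigr)$, and the logarithm cannot be removed. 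Your substitute bound $2/|y_j|$ is true, but it produces the operator $\int_{\cB(x_j)}|f(y)|\,dy_j/|y_j|$. That is a head operator with $a_j=\beta_jp$, which violates the condition $a_j+1<0$ of Lemma~\ref{LM:Hardy-d} because $\beta_j>-1/p$. It is genuinely unbounded: for $f$ supported where $|y_j|\sim1$, the output is a positive constant for large $|x_j|$, and $|x_j|^{\beta_jp}$ is not integrable at infinity. Hence your Step~3 claim, that only head averages and tail integrals appear, is false as written.

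The repair is not to collapse in one step. First apply your own reverse Step~4 with $G(s)=|F(s)|/|s_1\cdots s_d|$ in place of $\mathcal G$, which gives $\|w_{\vec\beta}V\|_p\lesssim\|w_{\vec\beta}\overline I\|_p\lesssim\|w_{\vec\beta}G\|_p$. Then use $G(s)\le|s_1\cdots s_d|^{-1}\int_{Q(s)}|f|$ and one more head Hardy inequality with $a_j=\beta_jp-p$. This is admissible since $\beta_j<1-1/p$, and gives $b_j=\beta_jp$. The logarithmic kernel is exactly the composition of these two head averages, and this two-stage argument is what the paper does.

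Alternatively, you could bound $\log t\le\epsilon^{-1}t^{\epsilon}$ and use the strict inequality $\beta_j+\epsilon<1-1/p$ in a shifted head Hardy inequality. With either fix, the rest of your argument goes through: Lemma~\ref{rev:trunc}$(iv)$ or Proposition~\ref{rev:forwardV} for the Fourier side, and the $L^1$ case.
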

\begin{proof}
Put $G(s):=|F(s)|/|s_1\cdots s_d|$. Splitting each $s_j$-integral in \eqref{rev:Ibar} at
$|s_j|=|x_j|$,
$$
\overline I(f;x)=\sum_{S\subseteq[d]}\prod_{j\notin S}\frac1{|x_j|}\ T_{\vec\epsilon(S)}\Big(G\prod_{j\in S}|s_j|^{-1}\Big)(x),\qquad \epsilon_j(S)=\mathbf 1_S(j),
$$
and Lemma~\ref{LM:Hardy-d} with $a_j=\beta_jp-p$ for $j\notin S$ (head; $\beta_j<1-1/p$;
$b_j=\beta_jp$) and $a_j=\beta_jp$ for $j\in S$ (tail; $\beta_j>-1/p$; $b_j=\beta_jp+p$, giving
$|s_j|^{\beta_jp}$ with the factor $|s_j|^{-1}$) yields
$\|w_{\vec\beta}V(f;\cdot)\|_p\lesssim\|w_{\vec\beta}\overline I(f;\cdot)\|_p\lesssim\|w_{\vec\beta}G\|_p$.

\emph{Forward.} By Proposition~\ref{rev:forwardV} it suffices that
$\|w_{\vec\beta}V\|_p\lesssim\|w_{\vec\beta}f\|_p$, which follows from the above together with
$G(s)\le|s_1\cdots s_d|^{-1}T_{(0,\dots,0)}|f|(s)$ and Lemma~\ref{LM:Hardy-d} with
$a_j=\beta_jp-p$. The resulting finite weighted variation norm implies
\eqref{rev:standing}, as shown before Lemma~\ref{rev:trunc}, so the improper transform
and the application of Proposition~\ref{rev:forwardV} are justified.

\emph{Reverse.} Lemma~\ref{rev:parseval} is available for $\hat f$: directly if $f\in L^1$, and
by Lemma~\ref{rev:trunc}$(iv)$ if $\|w_{\vec\beta}f\|_p<\infty$, since then
$\|w_{\vec\beta}V\|_p<\infty$ by the forward part. Taking $R=Q(s)$ in
Lemma~\ref{rev:parseval}, so that $b_j-a_j=2|s_j|$ and $|R|=2^d|s_1\cdots s_d|$, gives
$G(s)\le\pi^{-d}\mathcal G(s)$, whence
$\|w_{\vec\beta}f\|_p\le\|w_{\vec\beta}V\|_p\lesssim\|w_{\vec\beta}G\|_p\lesssim\|w_{\vec\beta}\mathcal G\|_p\lesssim\|w_{\vec\alpha}\hat f\|_p$
by Lemma~\ref{rev:calG-norm}.
\end{proof}

\section{One-dimensional specialization and examples}\label{SEC:1d}
For $d=1$, write
\[
 V_f(r)=\operatorname{Var}_{[r,\infty)}f+\operatorname{Var}_{(-\infty,-r]}f,
 \qquad I_f(r)=\int_{|y|\ge r}\frac{|f(y)|}{|y|^\tau+|y|^\kappa}dy.
\]
The condition is $V_f(r)\le CI_f(r/m)$, with $f\in BV_0(\Ro)$.
\begin{cor}[One-dimensional Pitt inequality]\label{COR:1d}
Let $\tau\le\kappa$, $1<p\le q<\infty$, and $-1/q<\alpha<1-1/q$.
For $f\in GM(\Ro;\tau,\kappa)$ with
$R(f)=\||x|^{\beta(x)}f\|_p<\infty$, where
$\beta(x)=2-\alpha-1/p-1/q-\tau$ for $|x|<1$ and
$\beta(x)=2-\alpha-1/p-1/q-\kappa$ otherwise,
$f\in L^1_{loc}(\R)$, its Fourier integral converges on each tail for $\xi\ne0$, and
$\||\xi|^\alpha\widehat f\|_q\lesssim R(f)$.
The constant depends on $p,q,\alpha,\tau,\kappa,C,m$.
\end{cor}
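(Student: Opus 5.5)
This is the case $d=1$ of Theorem~\ref{TH:main}, together with the well-definedness facts of Remark~\ref{REM:finite}. The plan is to check that the one-dimensional objects coincide with the $d$-dimensional ones and then quote those results.

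\textbf{Step 1 (matching the definitions).} For $d=1$, the set $Q^c(x)$ is $\{|y|\ge|x|\}$. Its two corners are $[|x|,\infty)$ and $(-\infty,-|x|]$, so $V(f;x)=V_f(|x|)$. The weight is $W(y)=|y|^\tau+|y|^\kappa$, so $I(f;x/m)=I_f(|x|/m)$. Hence the hypothesis $V_f(r)\le CI_f(r/m)$ is exactly \eqref{EQ:GMd}, and $f\in GM(\Ro;\tau,\kappa)$ is the class of Definition~\ref{DEFI:GMd} with $d=1$. The exponent $\beta(x)$ coincides with $\beta_1(x)$ from \eqref{EQ:beta}, so $R(f)=\mathcal R(f)$.

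\textbf{Step 2 (well-definedness).} Remark~\ref{REM:finite} with $d=1$ gives
\[
V_f(|x|)\lesssim R(f)\bigl(1+|x|^{\alpha+1/q-1}\bigr).
\]
The exponent $\alpha+1/q-1$ is greater than $-1$, so this bound is locally integrable. Since $|f|\le V$ by Property~\ref{PROP:recover}, we get $f\in L^1_{loc}(\R)$. For $\xi\ne0$, Lemma~\ref{LM:ibp} on each of the two rays $\{\pm x\ge|\xi|^{-1}\}$ shows that the improper tail integrals converge, with bound $2|\xi|^{-1}V_f(|\xi|^{-1})$. The head piece $\hat f_\emptyset(\xi)$ is a Lebesgue integral over the bounded interval $\cB(\xi^{-1})$, so $\hat f=\hat f_\emptyset+\hat f_{\{1\}}$ is defined as in \eqref{EQ:FT-def}.

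\textbf{Step 3 (the inequality).} Theorem~\ref{TH:main} with $d=1$ now gives $\||\xi|^\alpha\hat f\|_q\lesssim R(f)$. Its constant depends only on $p,q,\alpha,\tau,\kappa,C,m$.

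There is no genuine obstacle. The only point needing care is the bookkeeping in Step 1, specifically that the one-dimensional variation $V_f$ sums both rays, matching the sum over corners in $V(f;x)$. If one wanted a self-contained argument instead, one would redo the two steps of the proof of Theorem~\ref{TH:main} using the scalar Hardy inequalities \eqref{EQ:H1}--\eqref{EQ:H2} and Lemma~\ref{LM:mono} directly.
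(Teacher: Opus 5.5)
Your proposal is correct and follows the paper's proof, which likewise just sets $d=1$ in Theorem~\ref{TH:main} and Remark~\ref{REM:finite}. Your Step~1 bookkeeping, which matches $V_f$ with the two-corner variation, $W$ with $|y|^\tau+|y|^\kappa$, and $\beta$ with $\beta_1$, and your appeal to Lemma~\ref{LM:ibp} for the tail integrals, spell out details the paper leaves implicit.
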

\begin{proof}
Take $d=1$ in Theorem~\ref{TH:main} and Remark~\ref{REM:finite}.
This recovers \cite[Theorem 3.5]{Tok25} under finite spatial norm;
the support specializations are those of Corollary~\ref{COR:support}.
\end{proof}

\begin{rem}[Finite and infinite defining integrals]\label{REM:infinite}
Our definition permits $I_f(r)=\infty$. Since $f$ is bounded on every compact set away from zero, divergence, when present, is at infinity and then occurs for every $r>0$. Such membership is vacuous. It never occurs under the finite-norm hypotheses of Corollary \ref{COR:1d}. For example $f(x)=1/\log(e+|x|)$ is in $BV_0(\Ro)$ and has $I_f(r)=\infty$ for every $\kappa\leq1$. This also shows that neither bounded variation and decay nor GM membership alone implies integrability on the tails. We retain this convention to keep the elementary parameter inclusions valid without auxiliary integrability qualifications, and distinguish finite-integral membership in every classification below.
\end{rem}

\begin{prop}[Rapid decay]\label{PROP:rapid}
If $\kappa>1$ and $f\in GM(\Ro;\tau,\kappa)$, then $|x|^Nf(x)\to0$ as $|x|\to\infty$ for every $N>0$.
\end{prop}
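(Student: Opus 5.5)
Since $|f(x)|\le V_f(|x|)$ (Property~\ref{PROP:recover} with $d=1$), it suffices to prove that $V_f(r)=o(r^{-N})$ as $r\to\infty$ for every $N$. The plan is a bootstrap that starts from the GM inequality. For $r\ge m$ the defining integral $I_f(r/m)$ involves only $|y|\ge r/m\ge1$, where $w(y)\ge|y|^\kappa$. Using $|f(y)|\le V_f(|y|)$ and the monotonicity of $V_f$, we get
\[
 V_f(r)\le C\int_{|y|\ge r/m}\frac{V_f(|y|)}{|y|^{\kappa}}\,dy
 =2C\int_{r/m}^\infty V_f(t)\,t^{-\kappa}\,dt .
\]
This is a Volterra-type inequality for the nonincreasing function $V_f$. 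The gain $t^{-\kappa}$ with $\kappa>1$ should improve decay by a fixed power at each iteration.

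\emph{Base step.} First I need a finite starting bound. By Lemma~\ref{LM:Vtail}, $V_f(r)\to0$, so $V_f$ is bounded on $[r_0,\infty)$, say by $V_f(r_0)<\infty$. Plugging this bound in gives
\[
 V_f(r)\le 2C\,V_f(r_0)\int_{r/m}^\infty t^{-\kappa}dt\lesssim r^{1-\kappa}
 \qquad\text{for } r\ge mr_0 .
\]
If $I_f$ is infinite, the GM condition is void. In that case, however, the bound just used is finite, because it uses only the boundedness of $V_f$ and $\kappa>1$. So the chain of inequalities is legitimate regardless, provided the GM inequality is read as stated, with $V_f$ bounded by $C\,I_f$.

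\emph{Induction.} Suppose $V_f(t)\le A_k t^{-k(\kappa-1)}$ for $t\ge r_k$. Then for $r\ge m r_k$,
\[
 V_f(r)\le 2CA_k\int_{r/m}^\infty t^{-k(\kappa-1)-\kappa}dt
 =\frac{2CA_k\,m^{(k+1)(\kappa-1)}}{(k+1)(\kappa-1)}\,r^{-(k+1)(\kappa-1)} .
\]
The integral converges because the exponent exceeds $1$. Thus $V_f(r)=O(r^{-k(\kappa-1)})$ for every $k$. Since $\kappa-1>0$, this yields $O(r^{-N-1})$ for any $N$, and hence $|x|^N f(x)\to0$. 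The constants blow up with $k$, but each fixed $N$ requires only finitely many steps.

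\emph{Expected obstacle.} The only delicate point is making sure the first step is finite. The GM inequality must be used with the pointwise bound $|f|\le V_f$ *inside* the integral, rather than relying on finiteness of $I_f$, which may fail (Remark~\ref{REM:infinite}). I must also keep track that the restriction $r\ge m$ ensures $|y|\ge1$, so that only the $\kappa$ branch of $w$ enters; the $\tau$ part is harmless because $w(y)\ge |y|^\kappa$ holds for all $y$ anyway.
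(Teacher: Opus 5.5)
Your proof is correct and is essentially the paper's argument: both substitute $|f(y)|\le V_f(|y|)$ and $w(y)\ge|y|^{\kappa}$ into the GM inequality and iterate, gaining a factor $r^{1-\kappa}$ per step. The paper pulls out $V_f(r/m)$ by monotonicity to get the recursion $V_f(r)\le A\,r^{-(\kappa-1)}V_f(r/m)$ along $r_n=m^nr_0$, which yields super-polynomial decay in one estimate, whereas you bootstrap the polynomial exponent one step at a time. Also, your concern about $I_f=\infty$ does not arise: the same estimate gives $I_f(s)\le 2V_f(s)\,s^{1-\kappa}/(\kappa-1)<\infty$ for every $s>0$ when $\kappa>1$.
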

\begin{proof}
Enlarge $m$ to exceed one, which preserves the GM inequality. Since $|f(y)|\leq V_f(|y|)$ and $V_f$ is non-increasing, for $r\geq m$ we have
\[
 V_f(r)\leq C V_f(r/m)\int_{|y|\geq r/m}|y|^{-\kappa}dy
 =A r^{-(\kappa-1)}V_f(r/m),\qquad A=\frac{2Cm^{\kappa-1}}{\kappa-1}.
\]
Fix $r_0\geq m$ and let $r_n=m^nr_0$. Iteration gives
\[
 V_f(r_n)\leq A^n r_0^{-n(\kappa-1)}m^{-(\kappa-1)n(n+1)/2}V_f(r_0).
\]
For $r_n\leq |x|<r_{n+1}$, bound $|x|^N|f(x)|$ by $r_{n+1}^NV_f(r_n)$. The last display tends to zero after multiplication by $r_{n+1}^N$, because its negative quadratic exponent dominates every linear exponent in $n$.
\end{proof}

\begin{prop}[Monotone functions]\label{PROP:monotone1}
Let $M$ be the class of nonnegative functions non-increasing on $(0,\infty)$ and non-decreasing on $(-\infty,0)$, vanishing at both infinities. Then $M\subset GM(\Ro;\tau,\kappa)$ if and only if $\tau=\kappa=1$.
\end{prop}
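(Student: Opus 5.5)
The proof has two directions: membership of all of $M$ when $\tau=\kappa=1$, and a test function from $M$ that fails the condition otherwise.

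\emph{Sufficiency.} Assume $\tau=\kappa=1$ and let $f\in M$. Since $f$ is nonnegative, monotone on each half-line and vanishes at infinity, the variation on $[r,\infty)$ is $f(r)$ and on $(-\infty,-r]$ it is $f(-r)$. Hence $V_f(r)=f(r)+f(-r)$, and this is finite for $r>0$. This is exactly the case $d=1$ of Example~\ref{EX:monotone}. With $m=2$,
\[
 I_f(r/2)\ge\int_{r/2\le|y|\le r}\frac{f(y)}{2|y|}\,dy\ge\frac{\ln2}{2}\bigl(f(r)+f(-r)\bigr).
\]
So $f\in GM(\Ro;1,1)$ with $C=2/\ln2$. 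The constants $C,m$ are uniform over $M$, but I would only need membership of each function.

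\emph{Necessity.} Membership holds only with some $(C,m)$ depending on $f$. It therefore suffices to exhibit, in each excluded case, a single $f\in M$ with $V_f(r)\le CI_f(r/m)$ violated for every $C,m$. I test with indicators and power functions.

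Case $\kappa<1$ (or more generally $\kappa\le1$ with $\tau<1$ handled below): take $f=\mathbf 1_{[-1,1]}$, which lies in $M$. For $r\le1$ we have $V_f(r)=2$. Moreover
\[
 I_f(r/m)\asymp\int_{r/m}^1 y^{-\tau}\,dy
\]
up to a factor $2$ and the harmless comparison of $y^\tau+y^\kappa$ with $y^\tau$ for $y<1$. If $\tau<1$, this integral stays bounded as $r\to0$ by $\int_0^1y^{-\tau}dy$. That alone does not contradict the inequality, so a better test function is needed. I would use dilations instead: take $f=\mathbf 1_{[-\rho,\rho]}$ with $\rho\to0$. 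Then $V_f(r)=2$ for $r\le\rho$, while $I_f(r/m)\lesssim\int_0^\rho y^{-\tau}dy\to0$ when $\tau<1$. This rules out any fixed $C$ for the whole class. However, membership is per function, so a single function must violate the inequality. I would therefore glue scales: take $f=\sum_k c_k\mathbf 1_{[-\rho_k,\rho_k]}$ with rapidly decreasing $\rho_k$ and summable $c_k$. This $f$ is in $M$. At $r=\rho_k$ the variation is at least $c_k$, while $I_f(\rho_k/m)\lesssim\sum_{j\le k}c_j\rho_j^{1-\tau}$ (the contributions of the pieces still visible above the cutoff). Choosing $c_k=2^{-k}$ and $\rho_k$ so small that $\rho_{k}^{1-\tau}$ beats $2^{-k}$ makes the ratio $V/I$ unbounded, hence no $(C,m)$ works. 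If $\tau>1$, then $f=\mathbf 1_{[-1,1]}$ already makes $I_f(r/m)=\infty$, so the condition is vacuous and does not yield a contradiction. That is why the same gluing idea must be run at infinity.

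Cases $\kappa\neq1$ at infinity. If $\kappa>1$, Proposition~\ref{PROP:rapid} finishes immediately: $f(x)=(1+|x|)^{-1}\in M$ does not decay faster than every power, so it is not in $GM(\Ro;\tau,\kappa)$. If $\kappa<1$, take the glued step function $f=\sum_k c_k\mathbf 1_{[-R_k,R_k]}$ with $R_k\to\infty$ fast and $c_k=2^{-k}$. At $r=R_k$ the variation is at least $c_k$, while $I_f(R_k/m)\lesssim\sum_{j\ge k}c_jR_j^{1-\kappa}$. This tail is $\infty$ if $\kappa<1$, so I would instead choose steps with tails summable in the weighted sense. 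The main obstacle is precisely these borderline sign choices: ensuring $I_f$ is finite (so the condition is not vacuous) yet small relative to $V_f$. I would handle $\tau>1$ symmetrically via Proposition~\ref{PROP:rapid}'s mechanism near zero, using the inversion $x\mapsto1/x$ heuristic. I would also verify each inequality case directly with the glued steps, choosing scales so that single-block contributions dominate.
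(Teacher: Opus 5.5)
Your sufficiency argument, and your treatment of $\kappa>1$ via $(1+|x|)^{-1}$ and Proposition~\ref{PROP:rapid}, are correct and match the paper. The rest of the necessity direction has a genuine gap.

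\emph{The case $\tau<1$.} Your glued step function $f=\sum_k c_k\mathbf 1_{[-\rho_k,\rho_k]}$ with summable $c_k$ cannot work, because it is bounded. For every bounded nonzero $f\in M$,
\[
V_f(r)=f(r)+f(-r)\le2\sup f,
\]
while $I_f(r/m)$ is non-decreasing as $r\downarrow0$ and eventually positive. Hence $V_f(r)/I_f(r/m)$ stays bounded near the origin. In your construction, $V_f(\rho_k)=2\sum_{j\le k}c_j\le2\sum_jc_j$, and $I_f(\rho_k/m)\ge I_f(\rho_1/m)>0$. Your sum $\sum_{j\le k}c_j\rho_j^{1-\tau}$ is dominated by its first term, not its last. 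So the ratio is not unbounded, and the glued function actually satisfies the GM condition near zero.

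What is needed is an unbounded profile. The paper takes $f(x)=|x|^{-\gamma}$ for $0<|x|\le1$ and $f=0$ otherwise, with $\gamma>\max\{0,1-\tau\}$. Then $V_f(r)=2r^{-\gamma}$ for $r<1$, and
\[
I_f(r/m)\le2\int_{r/m}^1y^{-\gamma-\tau}\,dy\lesssim_m r^{1-\gamma-\tau},
\]
so $I_f(r/m)/V_f(r)\lesssim r^{1-\tau}\to0$. The compact support makes $\kappa$ irrelevant and keeps the defining integral finite, so membership is not vacuous.

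\emph{The cases $\kappa<1$ at infinity and $\tau>1$ near zero.} Your plans here are not merely unfinished; no counterexample can exist there. For every $f\in M$ and $r\ge m$, monotonicity and $|y|^\tau+|y|^\kappa\le2|y|^\kappa$ for $|y|\ge1$ give
\[
I_f(r/m)\ge\tfrac12\int_{r/m\le|y|\le r}f(y)|y|^{-\kappa}\,dy\gtrsim_m r^{1-\kappa}V_f(r).
\]
So the GM inequality holds automatically at large $r$ whenever $\kappa\le1$. Symmetrically, it holds for small $r$ whenever $\tau\ge1$.

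Your aside that $I_f(r/m)=\infty$ for $\mathbf 1_{[-1,1]}$ when $\tau>1$ is also false. The integral avoids the origin and is finite, of size $r^{1-\tau}$, which is exactly why the inequality holds there.

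The missing observation is that $\tau\le\kappa$ makes these cases redundant. If $\kappa<1$ then $\tau<1$, and if $\tau>1$ then $\kappa>1$. So every pair other than $(1,1)$ has $\tau<1$ or $\kappa>1$. The two tests, a power singularity at the origin and Proposition~\ref{PROP:rapid} at infinity, then exhaust the necessity direction.
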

\begin{proof}
For $f\in M$, $V_f(r)=f(r)+f(-r)$. For any $m>1$,
\[
 \int_{r/m\leq|y|\leq r}\frac{f(y)}{2|y|}\,dy
 \geq\frac{\log m}{2}\bigl(f(r)+f(-r)\bigr).
\]
This proves inclusion at $(1,1)$. If $\kappa>1$, $(1+|x|)^{-1}$ contradicts Proposition \ref{PROP:rapid}. If $\tau<1$, choose $\gamma>\max\{0,1-\tau\}$ and let $f(x)=|x|^{-\gamma}$ for $0<|x|\leq1$, zero otherwise. It is in $M$, $V_f(r)=2r^{-\gamma}$ for $r<1$, and
\[
 I_f(r/m)\leq 2\int_{r/m}^1y^{-\gamma-\tau}dy\lesssim_m r^{1-\gamma-\tau}.
\]
Thus $I_f(r/m)/V_f(r)\to0$. Every ordered pair $\tau\leq\kappa$ other than $(1,1)$ has either $\kappa>1$ or $\tau<1$, completing the proof.
\end{proof}

\begin{ex}[Two powers]\label{EX:powers}
Let $a<0$, $a_0\in\R$, and
\[
 h(x)=\begin{cases}|x|^{a_0},&0<|x|\leq1,\\|x|^a,&|x|>1.\end{cases}
\]
Then $h\in BV_0(\Ro)$, and its tails are integrable precisely when $a<-1$. Under the convention of Remark \ref{REM:infinite}, the complete classification is
\[
 h\in GM(\Ro;\tau,\kappa)\quad\Longleftrightarrow\quad
 \begin{cases}
 \kappa\leq a+1,\quad\text{or}\\
 a+1<\kappa\leq1\ \text{and}\ (a_0\geq0\ \text{or}\ \tau\geq1).
 \end{cases}
\]
The first alternative is vacuous membership. In the second alternative the defining integral is finite.
Indeed, for large $r$, $V_h(r)=2r^a$. The tail integral is infinite if $\kappa\leq a+1$, and otherwise $I_h(r/m)\asymp_m r^{a+1-\kappa}$, forcing $\kappa\leq1$. In the finite-tail case, as $r\downarrow0$,
\[
 V_h(r)\asymp1+r^{a_0},\qquad
 I_h(r/m)\asymp_m
 \begin{cases}
 1,&a_0+1-\tau>0,\\
 1+\log(1/r),&a_0+1-\tau=0,\\
 r^{a_0+1-\tau},&a_0+1-\tau<0.
 \end{cases}
\]
These comparisons prove the remaining condition; on compact intervening intervals the ratio is bounded. In particular, if $a_0<0$, finite-integral membership is possible only for $\tau=\kappa=1$.
\end{ex}

\begin{ex}[Oscillatory tails]\label{EX:osc}
Let $a>b>0$ and
\[
 h_{a,b}(x)=\frac{|\cos(2\pi|x|^b)|}{|x|^a}\,\mathbf1_{\{|x|\geq1\}}.
\]
As $r\to\infty$,
\[
 V_{h_{a,b}}(r)\asymp r^{b-a},\qquad
 I_{h_{a,b}}(r)\asymp r^{1-\kappa-a}\quad(\kappa+a>1).
\]

Set $t=y^b$. Each cosine period contributes variation comparable to
$k^{-a/b}$, by its zero endpoints, an interior maximum, and the derivative bound.
Summation for $k\gtrsim r^b$ gives $r^{b-a}$.
For $I_{h_{a,b}}$, the same substitution gives
\[
 I_{h_{a,b}}(r)\asymp\frac1b\int_{r^b}^\infty
 |\cos(2\pi t)|t^{(1-\kappa-a)/b-1}dt.
\]
Comparison period by period gives the displayed power when $\kappa+a>1$
and divergence otherwise.

Consequently $h_{a,b}\in GM(\Ro;\tau,\kappa)$ exactly when $\kappa\leq1-b$, for any $\tau\leq\kappa$. Its defining integral is finite exactly in the subrange $1-a<\kappa\leq1-b$. For $r$ in a bounded interval the condition follows by choosing $m>1$ and using a fixed positive portion of the tail integral. The tails of $h_{a,b}$ are integrable exactly when $a>1$.

For $0<b<1$ and $a>\max\{b,1\}$, the finite-integral subrange
$1-a<\kappa\le1-b$ supplies integrable examples outside $GM(\Ro;1,1)$.
Also $h_{3,2}\notin GM(\Ro;0,0)$, while
$h_{1+1/n,1}\in GM(\Ro;0,0)$ converges pointwise to a function with infinite
tail variation: uniform controls are essential for limit arguments.

\end{ex}

\begin{ex}[A common element of every parameter class]\label{EX:exp}
The function $h(x)=e^{-|x|}\mathbf1_{\{|x|\geq1\}}$ belongs to $GM(\Ro;\tau,\kappa)$ for every $\tau\leq\kappa$, with finite defining integrals. Fix $m>1$. For $0<r\leq m$, $V_h(r)\leq4e^{-1}$ and
\[
 I_h(r/m)=2\int_1^\infty\frac{e^{-y}}{y^\tau+y^\kappa}dy>0.
\]
For $r>m$, $V_h(r)=2e^{-r}$ and $I_h(r/m)\asymp_m e^{-r/m}r^{-\kappa}$. Their ratio stays bounded. Products of these functions give nonzero elements common to all the multidimensional parameter classes.
\end{ex}

\section{Sharpness and comparison with mixed-tail conditions}\label{SEC:remarks}
\begin{prop}[Optimal Fourier-side range]\label{PROP:sharp}
For every fixed $\vec\tau\leq\vec\kappa$, the range $-1/q<\alpha_i<1-1/q$ in Theorem \ref{TH:main} cannot be enlarged if the assertion is to hold for all members with finite spatial norm.
\end{prop}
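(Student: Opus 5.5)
We argue by contradiction separately at each endpoint, using a single fixed test function that lies in every parameter class with finite defining integrals. We take a nonzero product of the functions from Example~\ref{EX:exp}, or more simply a product of box indicators as in Example~\ref{EX:signed}, such as $f(x)=\prod_i\mathbf 1_{[1,2]}(x_i)$. This $f$ belongs to $GM(\Rdo;\vec\tau,\vec\kappa)$ for every $\vec\tau\le\vec\kappa$ (Example~\ref{EX:products}). It is supported away from the coordinate hyperplanes and is bounded with compact support, so $\mathcal R(f)<\infty$ for every choice of $\vec\alpha$; the weight $\prod_i|x_i|^{\beta_i(x_i)p}$ is bounded on $[1,2]^d$ whatever the exponents are. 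Hence if \eqref{EQ:main} held for some $\alpha_{i_0}$ outside the range, the left-hand side for this $f$ would have to be finite. We show instead that it is infinite.

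Since $f\in L^1$, $\hat f$ is the classical transform and factorizes: $\hat f(\xi)=\prod_i g(\xi_i)$ with $g(t)=(e^{2it}-e^{it})/(it)$. The left side of \eqref{EQ:main} is then $\prod_i\||t|^{\alpha_i}g\|_{L^q(\R)}$. Each factor is positive, so it suffices that the $i_0$-th factor is infinite. For the lower endpoint $\alpha_{i_0}\le-1/q$, we use that $g(0)=1$ and $g$ is continuous. Near $0$ the integrand is comparable to $|t|^{\alpha_{i_0}q}$, which is not integrable when $\alpha_{i_0}q\le-1$. For the upper endpoint $\alpha_{i_0}\ge1-1/q$, we use $|g(t)|=2|\sin(t/2)|/|t|$. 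On the intervals where $|\sin(t/2)|\ge1/2$, which have a fixed positive proportion of each period, the integrand is at least $c|t|^{(\alpha_{i_0}-1)q}$. Summing over periods gives divergence exactly when $(\alpha_{i_0}-1)q\ge-1$. The remaining coordinates, with any admissible $\alpha_i$, give finite nonzero factors; if they are also outside the range, the factors are infinite, which only helps.

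If one insists on keeping the other $\alpha_i$ in range and varying a single coordinate, the same computation applies verbatim. Thus the inequality fails for a function with finite spatial norm, and the stated range is optimal for every $\vec\tau\le\vec\kappa$.

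The main point to check carefully is membership of the test function in $GM(\Rdo;\vec\tau,\vec\kappa)$ uniformly for all parameters, including cutoffs $x$ with some $|x_i|>2$, where $V=0$. This is handled as in Example~\ref{EX:signed} with $m=3$, using positivity of the defining integral over $[1,2]^d$. Everything else is an explicit one-variable computation.
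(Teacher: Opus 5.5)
Your proof is correct and follows essentially the same route as the paper. Both use the test function $\prod_i\mathbf 1_{[1,2]}(x_i)$, which lies in every class via $m=3$ and positivity of the defining integral over $[1,2]^d$, and both use the factorized transform with modulus $2|\sin(t/2)|/|t|$ per coordinate, divergent near $0$ when $\alpha_iq\le-1$ and at infinity when $(\alpha_i-1)q\ge-1$, with Tonelli giving the multidimensional conclusion.
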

\begin{proof}
Take $f(x)=\prod_i\mathbf1_{[1,2]}(x_i)$. Its full mixed variation is bounded, and vanishes if any $|x_i|>2$. Otherwise, with $m=3$, the exterior integration set contains $[1,2]^d$, so $I(f;x/m)$ is bounded below by the fixed positive integral of $1/W$ over that box. Hence $f$ belongs to every parameter class and has finite spatial norm for every finite choice of exponents. Its Fourier transform factors, with each factor having magnitude $2|\sin(\xi_i/2)|/|\xi_i|$. Near zero it tends to one, requiring $\alpha_iq>-1$. At infinity, integration on fixed positive portions of successive sine periods requires $(\alpha_i-1)q<-1$. These are precisely the stated strict bounds; at either endpoint the corresponding series is harmonic. Tonelli's theorem gives the multidimensional conclusion.
\end{proof}

\begin{rem}[Scaling and the spatial exponents]
For classical parameters, Proposition \ref{REM:dilation} preserves the GM constants under independent dilations. Applying a uniform inequality with homogeneous input exponents $\rho_i$ to all coordinate dilates of the box function in Proposition \ref{PROP:sharp} forces
\[
 \rho_i=1-1/p-1/q-\alpha_i.
\]
Indeed, under dilation in coordinate $i$, the Fourier norm scales as $\lambda_i^{-1+1/q+\alpha_i}$ and the spatial norm as $\lambda_i^{-1/p-\rho_i}$. Equality of these powers is necessary as $\lambda_i\to0$ and $\lambda_i\to\infty$. For general parameters our available GM constants vary with the dilation, so this argument does not prove optimality of the piecewise input exponents. We make no such claim.
\end{rem}

\begin{rem}[Relation to the mixed-tail condition]\label{REM:tailcomparison}
In \cite{Tok26}, $g$ is the upper-tail potential of a complex Radon measure $\mu$ on $(0,\infty)^d$. Writing
\[
 U_g(x)=|\mu|\bigl(\prod_i[x_i,\infty)\bigr),
\]
the assumption is $\int_{\prod_i(0,r_i)}U_g\leq K\int_{\prod_i(0,\lambda_i r_i)}|g|$. It leads to mixed-norm inequalities for coordinatewise even extensions. Our exterior-tail condition does not imply that condition: take $g=\prod_i\mathbf1_{[1,2]}$ and its even extension $f$. As in Proposition \ref{PROP:sharp}, $f$ is in every class considered here. On the positive orthant its mixed derivative has total variation $2^d$ above every $x$ with $0<x_i<1$, since it is a product of the signed boundary measures at one and two. For $r_i<1/\lambda_i$, the proposed interior right-hand side is zero and its left-hand side is $2^d\prod_i r_i>0$. Thus no finite $K,\lambda$ can satisfy the mixed-tail condition. This proves a separation in one direction. We do not assert a reverse inclusion or general incomparability without further hypotheses.
\end{rem}

\section*{Declarations}

\noindent\textbf{Author contributions.}
Niyaz Tokmagambetov is the sole author and assumes responsibility for the
research and the final manuscript.

\smallskip
\noindent\textbf{Data and code availability.}
This manuscript presents theoretical results. No datasets were generated or
analysed, and the results do not rely on numerical experiments or research
software.

\smallskip
\noindent\textbf{Ethics approval and consent.}
This work does not involve human participants or animals. Ethical approval
and informed consent are not applicable.

\end{document}